\newtheorem{theorem}{Theorem}[section]
\newtheorem{proposition}[theorem]{Proposition}
\newtheorem{lemma}[theorem]{Lemma}
\newtheorem{cor}[theorem]{Corollary}
\newtheorem{problem}[theorem]{Problem}
\newtheorem{conjecture}[theorem]{Conjecture}
\theoremstyle{definition}
\newtheorem{definition}[theorem]{Definition}
\theoremstyle{plain}
\numberwithin{equation}{theorem}
\theoremstyle{remark}
\newtheorem{remark}[theorem]{Remark}
\DeclareMathOperator{\Spec}{Spec} 
\DeclareMathOperator{\Proj}{Proj}
\DeclareMathOperator{\id}{id}
\DeclareMathOperator{\Pic}{Pic}
\DeclareMathOperator{\Div}{div}
\DeclareMathOperator{\DDiv}{Div}
\def\O{{\mathcal O}}
\newcommand{\bP}{{\mathbb P}}
\newcommand{\bZ}{{\mathbb Z}}
\newcommand{\bG}{{\mathbb G}}
\newcommand{\bC}{{\mathbb C}}
\newcommand{\bA}{{\mathbb A}}
\newcommand{\bQ}{{\mathbb Q}}
\newcommand{\s}{\mathcal{S}}
\newcommand{\Cu}{\mathcal{C}}
\theoremstyle{remark}
\newtheorem{claim}{Claim}
\title{An Hilbert Irreducibility Theorem for integral points of Del Pezzo surfaces}
\author{Simone Coccia}
\address{
	Simone Coccia \\
	Department of Mathematics\\
	University of British Columbia\\
	Vancouver, BC V6T 1Z2\\
	Canada
}
\email{scoccia@math.ubc.ca}
\begin{document}
	
	\begin{abstract}
        We prove that the integral points are potentially Zariski dense in the complement of a reduced effective singular anticanonical divisor in a smooth del Pezzo surface, with the exception of $\bP^2$ minus three concurrent lines (for which potential density does not hold). This answers positively a question raised by Hassett and Tschinkel and, combined with previous results, completes the proof of the potential density of integral points for complements of anticanonical divisors in smooth del Pezzo surfaces. We then classify the complements which are simply connected and for these we prove that the set of integral points is potentially not thin, as predicted by a conjecture of Corvaja and Zannier. 
        \end{abstract}

	\maketitle
	
	\section{Introduction}

\subsection{Notation}
We denote by $X$ a projective variety and by $D$ a reduced effective divisor of $X$, all defined over a number field $K$.
We say that the integral points on the pair $(X,D)$ are \emph{potentially dense} if there exists a finite extension $K'$ of $K$, a finite set of places $S$ of $K'$ containing the archimedean ones and a $\O_{K',S}$-integral model $(\mathcal{X},\mathcal{D})$ of $(X,D)$ such that the $\O_{K',S}$-integral points $(\mathcal{X} \setminus \mathcal{D})(\O_{K',S})$ are Zariski dense. If an $\O_{K,S}$-integral model for $(X,D)$ is fixed, for any closed subvariety $C \subseteq X$, we denote by $(C\setminus D)(\O_S)$ the set of points of $C(K)$ which are $S$-integral with respect to $D$. For a set $A \subset X$, we denote its Zariski closure by $\overline{A}^{{\rm Zar}}$.
\subsection{Potential density}

One central theme in Diophantine geometry is to find conditions under which the integral points of $(X,D)$ are potentially dense. It is expected that this should only depend on the geometry of the pair and roughly speaking that the \lq\lq more positive\rq\rq{} (resp. negative) the log-canonical divisor of the pair is, the \lq\lq more abundant\rq\rq{} (resp. scarce) the integral points are (potentially).
A precise web of conjectures has been advanced by Campana \cite{campana_fourier_2004}, who has defined so called \emph{special} pairs\footnote{We remark that Campana proposed conjectures in the more general context of orbifold pairs.} and conjectured that they should be precisely the ones admitting a potentially dense set of integral points. One important class of special pairs is the one with trivial log-canonical divisor, that is $K_X+D\sim 0$. In particular, one expects the following:
\begin{conjecture}[Campana, Hassett-Tschinkel]\label{log_k3}
    Let $(X,D)$ be a pair with $X$ a smooth projective variety and $D$ a reduced effective anticanonical divisor with at most normal crossings singularities. Then the integral points on $(X,D)$ are potentially dense.
\end{conjecture}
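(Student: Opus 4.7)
The strategy rests on the observation that $K_X+D \sim 0$ forces $X \setminus D$ to be log Calabi--Yau, which in good cases means it fibers in Kodaira-dimension-zero varieties or carries a large automorphism group; both structures are well adapted to constructing integral points. The plan is thus to produce either a suitable fibration $f \colon X \to B$ together with many integral sections, or to exhibit enough automorphisms of $(X,D)$ to spread a single integral seed into a Zariski-dense orbit.

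I would first settle the surface case, where the Looijenga-type classification of log Calabi--Yau pairs splits the problem into two: either $D$ is an irreducible (smooth or nodal) curve of arithmetic genus one, or $D$ is a cycle of smooth rational curves. In the first sub-case, one would look for a non-isotrivial elliptic fibration $\pi \colon X' \to \bP^1$, obtained after possibly blowing up $X$ and/or a finite base change, having $D$ contained in a fiber and positive generic Mordell--Weil rank. Silverman's specialization theorem, combined with a Hilbert irreducibility argument applied to the cover parametrizing sections, should then yield infinitely many rational sections integral with respect to $D$; translating such a section by the (generically positive rank) Mordell--Weil group produces a Zariski-dense set of integral points. In the second sub-case, one would study the automorphism group of $(X,D)$, which for generic Looijenga pairs is a large Coxeter or cluster-like group acting on the Picard lattice, and spread a single integral seed point under this action.

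The principal obstacle is verifying the two positivity assumptions: that the generic Mordell--Weil rank of the auxiliary elliptic fibration is positive, and that the automorphism group of $(X,D)$ is infinite with Zariski-dense orbits on $X \setminus D$. Both can fail for degenerate configurations: low-degree del Pezzos can force rank zero on a particular fibration by Hodge-theoretic or Picard-lattice constraints, and very degenerate cycle configurations, such as $\bP^2$ minus three concurrent lines singled out in the abstract, collapse the relevant automorphism group to a torus preserving a pencil, blocking density altogether. Isolating exactly which combinatorial types of $(X,D)$ present genuine obstructions, and handling the remaining ones case-by-case via explicit geometric constructions (producing an alternate fibration, an explicit birational automorphism, or a base change that injects Mordell--Weil rank), is where the real work lies.

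For $\dim X \geq 3$, where no analogous classification is available, I would attempt to reduce to the surface case by slicing with a pencil of ample sections, or by finding a dominant morphism to a curve whose generic fiber is a log Calabi--Yau surface to which the surface case applies. This induction step, leaning entirely on the surface result, is the most speculative part of the plan and I would expect it to require additional hypotheses beyond what is stated in \Cref{log_k3}.
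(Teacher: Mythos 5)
The statement you were asked about is a \emph{conjecture} (due to Campana and Hassett--Tschinkel), and the paper does not prove it -- nor does anyone: as the introduction notes, already the surface case with $D=\emptyset$ is the open problem of potential density of rational points on K3 surfaces. What the paper actually proves is the special case of smooth del Pezzo surfaces with a (possibly singular) reduced anticanonical divisor (\Cref{soluzione_hassett}), and it does so by completely explicit means: reductions of the various configurations to $\bA^1\times\bG_m$ or to complements treated by Silverman and Beukers, and families of rational curves meeting $D$ in at most two points on which Pell-equation-type results (\Cref{pell}, \Cref{punti_infiniti_giusto}) give infinitely many integral points. No elliptic fibrations, Mordell--Weil rank arguments, or automorphism-group orbits are used for this.

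Your proposal is therefore not a proof but a research program, and it has genuine gaps even where it is most concrete. In the surface case, the two pillars you lean on -- positive generic Mordell--Weil rank of an auxiliary elliptic fibration, and an infinite automorphism group of $(X,D)$ with Zariski-dense orbits -- are exactly the hypotheses you admit can fail, and you give no mechanism for handling the failures beyond ``case-by-case explicit constructions,'' which is where the entire difficulty lives (this is, in essence, what the known partial results of Silverman, Beukers, Hassett--Tschinkel, and the present paper supply for specific families). Your dichotomy also omits $D=\emptyset$, i.e.\ the K3 case, which is wide open and not touched by either of your two mechanisms in general. Finally, the reduction of the case $\dim X\ge 3$ to surfaces by slicing or fibering is unsupported: a general ample slice of a log Calabi--Yau pair is of log general type, not log K3, and you yourself flag this step as speculative and as likely needing extra hypotheses -- which means it does not prove the statement as given. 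In short, the proposal identifies reasonable tools from the literature but does not close any of the steps that make the conjecture a conjecture.
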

These pairs are particularly interesting since they sit in between the cases of positive and negative log-canonical divisor: while the integral points are expected to be dense, they should not be \lq\lq too abundant\rq\rq{}. In the case of surfaces, \Cref{log_k3} is already a major open problem for $D=\emptyset$, which amounts to potential density of rational points on K$3$ surfaces. More generally, when $X$ is a surface and $D$ is non-empty anticanonical we call such a pair a \emph{log-K$3$ surface}. 
\Cref{log_k3} has been solved in the case of $X=\bP^2$ and $D$ singular by Silverman \cite{silverman} and of $X=\bP^2$ and $D$ smooth by Beukers \cite{beukers}. Hassett and Tschinkel have proved it for $X$ a smooth del Pezzo surface and $D$ smooth, leaving open the case of $D$ singular. Our first result is to answer positively their expectation:
\begin{theorem}\label{soluzione_hassett}
    Let $(X,D)$ be a pair with $X$ a smooth del Pezzo surface and $D$ a reduced effective anticanonical divisor. Assume that the pair is not $(\bP^2,D)$ with $D$ the union of three concurrent lines. Then the integral points of $(X,D)$ are potentially dense.
\end{theorem}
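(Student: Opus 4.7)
The plan is to extend the approach of Hassett--Tschinkel (smooth $D$) and Silverman (singular $D$ on $\bP^2$) to the remaining case: $X$ a smooth del Pezzo of degree $d \leq 8$ and $D$ a singular reduced anticanonical divisor. The overall strategy is to construct, on each such pair, a rational fibration $\pi\colon X \dashrightarrow \bP^1$ whose generic fiber meets $D$ in only one or two points, so that $\pi$ restricts to an $\bA^1$- or $\bG_m$-fibration on $X \setminus D$. Once such a fibration is available, potentially dense integral points on the generic fiber, combined with sufficiently many integral multi-sections, should force potential density on the total space.

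First I would set up the classification. By adjunction $p_a(D) = 1$, so $D$ lies in the short list of reduced singular Gorenstein curves of arithmetic genus one---an irreducible nodal or cuspidal rational curve, a pair of rational components meeting in two points (possibly infinitely near), a cycle of $n \geq 3$ rational components, or a star-shaped configuration of three rational curves through a common point. Cross-referencing with the classification of smooth del Pezzo surfaces $X_d$ for $d = 1, \dots, 8$ and the combinatorics of their $(-1)$-curves produces a finite, explicit list of pairs to handle.

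For each such pair I would exhibit an appropriate pencil $|L|$ of rational curves, most naturally a pencil of conics coming from one of the many conic-bundle structures on $X_d$. For a general member $F$ one has $F \cdot D = F \cdot (-K_X) = 2$, and the task is to arrange the pencil so that these two intersection points lie on components of $D$ in a way that makes $F \setminus D$ isomorphic to $\bA^1$ or $\bG_m$. Integral points on such a generic fiber are potentially dense by Dirichlet, and a N\'eron--Silverman-style specialization argument, combined with integral multi-sections supplied by further $(-1)$-curves meeting $D$ in one or two points, propagates this to Zariski density on $X \setminus D$ after a finite extension of $K$ and enlargement of $S$.

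I expect the main obstacle to lie in the most constrained configurations---$D$ irreducible with a single cusp, or with only two highly tangent components, on a del Pezzo of low degree---where no pencil on $X$ itself directly produces the required intersection pattern with $D$. In such cases the plan is to pass to a carefully chosen birational model (blowing up a singular point of $D$, or contracting a $(-1)$-curve meeting $D$ transversally) on which a good pencil does exist, and then transport integral points back to $(X, D)$ at the cost of enlarging $S$. This framework must also explain why it necessarily fails for $(\bP^2, \text{three concurrent lines})$: blowing up the common point realizes the complement as an $\bA^1$-bundle over $\bP^1 \setminus \{3\text{ points}\}$, and Siegel's theorem on the base rules out density---a geometric obstruction that, crucially, does not arise for any of the pairs covered by the theorem.
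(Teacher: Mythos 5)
Your proposed engine---conic fibrations whose general fiber meets $D$ in at most two points, fed by integral multisections---is viable in outline, but as written there are two concrete gaps. First, the propagation mechanism is not a N\'eron--Silverman specialization argument: there are no heights or elliptic fibrations in play. What is actually needed is the Pell-equation statement (\Cref{pell} and \Cref{punti_infiniti_giusto}): a smooth rational fiber with at most two points at infinity that contains one $S$-integral point contains infinitely many, once $K$ has a complex place and $\#S\ge 2$. For this to produce density you must exhibit, for \emph{each} configuration on your list, a multisection which is itself a rational curve meeting $D$ in at most two points and therefore has infinitely many integral points (e.g.\ a $(-1)$-curve $E$ with $E\cdot D=1$, or a fiber of the second ruling on $\bP^1\times\bP^1$); you gesture at this but never verify it, and the hardest cases (irreducible cuspidal $D$, low degree) are deferred to an unspecified birational model. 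So the case analysis your plan generates is set up but not carried out, and the key arithmetic input is misidentified.

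It is also worth noting that the paper's proof shows essentially all of this case analysis is unnecessary. Since every smooth del Pezzo other than $\bP^2$ and $\bP^1\times\bP^1$ is a blow-up $\sigma\colon X\to\bP^2$, and by \Cref{caratterizzazione_divisori} every reduced anticanonical $\widehat D$ maps to a cubic $D=\sigma(\widehat D)$ passing through all blown-up points, one may simply enlarge the boundary to $D'=\widehat D+\sum_i E_i$: then $X\setminus D'\cong\bP^2\setminus D$, potential density for $(\bP^2,D)$ is already known (Silverman for normal crossings, Beukers for smooth $D$, and the non-normal-crossings plane cases are $\bA^1\times\bG_m$ after deleting one more curve), and $D'$-integral points are a fortiori $\widehat D$-integral. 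Fibration-style arguments are then needed only in two residual situations: when $\sigma(\widehat D)$ is three concurrent lines (where the complement is identified with $\bA^1\times\bG_m$), and $\bP^1\times\bP^1$ with singular $D$, where after reducing to a plane cubic the degenerate ``two parallel lines'' subcase is handled by an explicit family of $(1,1)$-curves through the singular point together with \Cref{punti_infiniti_giusto}. Your route, even if completed, would be substantially longer; to finish it you would need exactly the integral multisections and the Pell-type corollary above, not a specialization theorem.
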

\begin{remark}
Notice that \Cref{soluzione_hassett} includes also all cases of $D$ not normal crossings, except $\bP^2$ minus three concurrent lines. Indeed this surface is isomorphic to $\bA^1\times \bP^1\setminus \{0,1,\infty \}$ and so the integral points are never dense because of Siegel's Theorem applied to $\bP^1\setminus \{0,1,\infty \}$.
\end{remark}

\subsection{Hilbert Property}
It is natural to ask whether there is some property which is stronger than Zariski density and which could be potentially satisfied by the set of integral points on smooth del Pezzo surfaces minus an anticanonical divisor. One possible candidate is the (potential) Integral Hilbert Property, a geometric generalization to arbitrary varieties of the classical Hilbert's irreducibility theorem. More precisely, we say that an integral model $(\mathcal{X},\mathcal{D})$ for a pair $(X,D)$ has the Integral Hilbert Property, which we abbreviate with IHP, if the set of integral points is not thin (see \Cref{def_thin}). In \cite{CZ_hilbert} Corvaja and Zannier have asked the following question:
\begin{problem}[\hspace{1sp}\cite{CZ_hilbert}]\label{potente}
    Let $(X,D)$ be a pair with $X$ a smooth projective variety and $D$ a reduced effective divisor with at most normal crossings singularities, both defined over $K$. Assume that $X\setminus D$ is simply connected\footnote{In order for the IHP to hold, it is necessary to assume that $X\setminus D$ is simply connected (see \Cref{corollario_cw}).}. Let $(\mathcal{X},\mathcal{D})$ be an $\O_S$-integral model for $(X,D)$ with a Zariski dense set of integral points. Does $(\mathcal{X},\mathcal{D})$ have the IHP, possibly after an enlargement of $K$ and $S$?
\end{problem}
We remark that there is a more general version of \Cref{potente} asking whether the Weak Hilbert Property\footnote{This means that, for any finite collection of \emph{ramified} covers (see \Cref{def_cover}) of $X\setminus D$, there is always a Zariski dense set of points of $(X\setminus D)(\O_S)$ that do not lift to any cover of the collection.} holds for all pairs satisfying potential density of integral points (including the case of $X\setminus D$ \emph{not} simply connected). 
In its full generality, this is a very difficult question: already for $D$ empty (that is for rational points) and $X$ unirational, a positive answer would give a positive solution to the Inverse Galois Problem for finite groups (see \cite{thelene_sansuc}). \Cref{potente} is solved for curves, the main case being Hilbert's irreducibility theorem, which can be formulated in this language as the following:
\begin{theorem}[Hilbert]\label{hilbert}
	The affine line $\mathbb{A}^1_{\bZ}$ has the IHP.
\end{theorem}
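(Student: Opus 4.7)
The plan is to reduce this to the classical Hilbert irreducibility theorem, which I will take as a black box in the following form: for any finite collection of polynomials $f_1(T,X),\dots,f_n(T,X)\in\bQ[T,X]$ that are irreducible in $\bQ(T)[X]$ with $\deg_X f_i\geq 2$, there exist infinitely many $t\in\bZ$ for which every specialization $f_i(t,X)$ remains irreducible in $\bQ[X]$. Granting this, the rest of the argument is a translation between the geometric language of thin sets and the classical language of polynomial specialization.

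Suppose for contradiction that $\bZ\subset\bA^1(\bQ)=\bQ$ is thin. By the definition of a thin set, there exist a finite set $F\subset\bQ$, irreducible curves $Y_1,\dots,Y_n$, and generically finite dominant morphisms $\pi_i:Y_i\to\bA^1$ of degree $d_i\geq 2$ with
$$\bZ\;\subseteq\;F\;\cup\;\bigcup_{i=1}^n \pi_i\bigl(Y_i(\bQ)\bigr).$$
After replacing each $Y_i$ by its normalization and discarding a finite set of points (absorbed into $F$), I may assume the function field extension $\bQ(Y_i)/\bQ(T)$ is generated by a primitive element; clearing denominators produces an irreducible polynomial $f_i(T,X)\in\bQ[T,X]$ with $\deg_X f_i=d_i\geq 2$, irreducible in $\bQ(T)[X]$. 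A $\bQ$-point of $Y_i$ lying above $t\in\bQ$ corresponds to a rational root of $f_i(t,X)$, so $\pi_i(Y_i(\bQ))$ is contained in the set of $t\in\bQ$ for which $f_i(t,X)$ has a root in $\bQ$, which, given the degree bound, forces $f_i(t,X)$ to be reducible in $\bQ[X]$.

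Applying the classical theorem to $\{f_1,\dots,f_n\}$, I obtain infinitely many $t\in\bZ$ for which every $f_i(t,X)$ is irreducible in $\bQ[X]$ and hence has no rational root. For any such $t$, $t\notin\pi_i(Y_i(\bQ))$ for every $i$, and removing the finitely many integers in $F$ still leaves infinitely many $t\in\bZ$ outside $F\cup\bigcup_i\pi_i(Y_i(\bQ))$, contradicting the supposed inclusion. The only genuine content of the argument is the classical Hilbert irreducibility theorem itself, whose proof (via explicit upper bounds on the number of $t\leq N$ producing a reducible specialization, using Puiseux expansions or Siegel's theorem on integral points) is the main obstacle; once granted, the deduction of the IHP for $\bA^1_\bZ$ is a routine unwinding of definitions.
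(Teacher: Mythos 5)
Your argument is correct and matches the paper's treatment: the paper states this result as the classical Hilbert irreducibility theorem without giving a proof, and your proposal is precisely the standard unwinding of the thin-set definition into the polynomial-specialization form of that classical theorem. The only detail to tidy is that you should also discard the finitely many $t$ where the specialized polynomial $f_i(t,X)$ drops degree (or where the chosen primitive element is undefined), so that irreducibility of $f_i(t,X)$ genuinely rules out a rational root; this is routine and does not affect the argument.
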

Most of the known instances of \Cref{potente} have been obtained for $D=\emptyset$. In this case one simply talks about the Hilbert Property (HP), which amounts to $X(K)$ being not-thin. The HP is clearly a birational invariant; it holds for $\bP^n_K$ and so for all rational varieties. The first example of non-unirational variety with the HP has been given by Corvaja and Zannier in \cite{CZ_hilbert}: this is the Fermat quartic in $\bP^3$ of equation $x^4+y^4=z^4+w^4$. They developed a technique involving the use of two elliptic fibrations to produce \lq\lq many\rq\rq{} rational points on the surface, thereby proving the HP over $\bQ$.
In recent years several cases of \Cref{potente} and its more general version have appeared in \cite{coccia_ihp, corvaja_solo, corvaja_demeio_javanpeykar_lombardo_zannier_2022, demeio_ijnt, demeio_imrn, zannier_ferretti, mezzedimi_gvirtz, gvirtz_huang, javanpeykar, javanpeykar_2, streeter_campana, streeter_delpezzo, zannier_hilbert_algebraic_groups}.
See also \cite{soroker_fehm_petersen_1, soroker_fehm_petersen_2,demeio_streeter,loughran_salgado,luger} for related work.

Few instances of \Cref{potente} are known for integral points on varieties, such as $\bA^n$ (which reduces to \Cref{hilbert}) and the Weak IHP for $\bG_m^n$ (see \cite{corvaja_solo,zannier_ferretti,zannier_hilbert_algebraic_groups}). Notice that $\bA^n$ is a \emph{log-Fano} variety, that is, the opposite of the log-canonical divisor is ample: in particular, one expects that it should contain \lq\lq many\rq\rq{} integral points. Indeed all log-Fano varieties are special by \cite[Corollaire $6.10$]{campana_jimj_2011} (hence conjecturally satisfy potential density of integral points) and simply connected (by \cite{qi_zhang_rational_connectedness} and \cite[Cor. $4.18$]{debarre}): one then expects the IHP to hold potentially.
This is not the case for log-K$3$ varieties: while the integral points are expected to be potentially dense (by \Cref{log_k3}), these varieties are not always simply connected and so the IHP cannot hold. This is precisely what happens for log-K$3$ curves, that is elliptic curves and conics with two points at infinity, which have a potentially dense yet thin set of integral points. Notice that this is in accordance with the fact that the integral points should be less abundant (but still dense) compared to the log-Fano case.

For surfaces there are examples of simply connected log-K$3$, such as smooth cubic surfaces with a smooth anticanonical divisor at infinity. These surfaces have a potentially dense set of integral points (see \cite{beukers},\cite{Hats}) so they should have the IHP potentially. This has been proved by the author in \cite{coccia_ihp} adapting the double fibration technique of Corvaja and Zannier to conic fibrations and integral points.
In this paper we give a positive answer to \Cref{potente} for all smooth del Pezzo surfaces with a (possibly singular) anticanonical divisor at infinity:
\begin{theorem}\label{teorema_principale}
    Let $X$ be a smooth del Pezzo surface and $D$ a reduced effective anticanonical divisor of $X$. If $X\setminus D$ is simply connected, then $(X,D)$ has the IHP potentially.
\end{theorem}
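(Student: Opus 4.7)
The plan is to proceed in two steps: first classify the pairs $(X,D)$ with $X$ a smooth del Pezzo surface, $D$ a reduced effective anticanonical divisor, and $X\setminus D$ simply connected; then, for each case in the classification, upgrade the potential Zariski density of $(X\setminus D)(\O_S)$ — already granted by \Cref{soluzione_hassett} — to potential non-thinness via a double fibration argument in the spirit of Corvaja--Zannier, adapted to integral points as in \cite{coccia_ihp}.

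For the classification step I would compute $\pi_1(X\setminus D)$ from the dual graph and self-intersections of the irreducible components of $D$, recalling that by adjunction $D$ is either a smooth genus-$1$ curve (whose complement always has nontrivial $H_1$, hence is excluded), an irreducible rational curve with a single node or cusp, or a cycle of smooth rational curves meeting transversally. Simple connectedness forces strong combinatorial restrictions (no disconnected $D$, vanishing of the torsion coming from the intersection pairing on the components of $D$, etc.), and working through the possibilities degree by degree in $K_X^2$ yields an explicit short list of pairs $(X,D)$ to treat.

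For each pair in the list I would exhibit two independent pencils $\pi_1,\pi_2\colon X\dashrightarrow \bP^1$ whose generic fiber $C_t$ meets $D$ in a divisor supported at a single geometric point, so that $C_t\setminus D\cong \bA^1$ and the integral points on the generic fiber form a non-thin set by \Cref{hilbert}. Given any thin subset $T\subseteq (X\setminus D)(\O_{K',S'})$ and any finite collection of ramified covers appearing in the definition of thinness, I would combine Hilbert's irreducibility on the base $\bP^1$ (to show that the pulled-back covers remain irreducible over a non-thin set of $t\in \bP^1(K')$) with the non-thinness of integral points on the generic fiber (to pick a fiberwise point outside $T$ for each such $t$). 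Using the two independent pencils, the collected points generate a Zariski-dense non-thin subset of $(X\setminus D)(\O_{K',S'})$.

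The main obstacle is the construction of the required pencils in the low-degree cases, namely del Pezzos of degree $\le 3$ with singular anticanonical $D$, where pencils with the correct intersection behavior against $D$ are scarce and the integrality condition at $D$ is delicate. I would address this by blowing up carefully chosen auxiliary points on $D$ so that the strict transform of $D$ acquires new $(-1)$-curve components, exploiting these $(-1)$-curves to produce conic fibrations whose generic fibers meet $D$ at a single point — a variant of the technique already used for smooth anticanonical divisors in \cite{coccia_ihp}, now requiring a case-specific choice of blow-up centers dictated by the singularity type of $D$.
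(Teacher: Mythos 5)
There is a genuine gap, and it occurs at both stages of your plan. First, the classification step is wrong where you exclude smooth $D$: the complement of a \emph{smooth} genus-$1$ anticanonical curve can perfectly well be simply connected. For instance, if $X$ is the blow-up of $\bP^2$ at a point $P$ of a smooth cubic $D$, the class of the strict transform $\widehat D$ is $3h-e$, which is primitive in $\Pic(X)$, and intersecting with the exceptional curve kills every candidate cyclic cover; this is exactly case \labelcref{caso_irriducibile} of \Cref{caratterizzazione_complementi}, and it is one of the cases the theorem must handle (it is the content of \Cref{irreducible}). Your heuristic ``smooth genus-$1$ boundary $\Rightarrow$ nontrivial $H_1$'' only reflects the situation in $\bP^2$ itself, where $[D]=3h$ is divisible; after blowing up a point on $D$ the divisibility drops and $\pi_1(X\setminus\widehat D)$ becomes trivial. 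So your short list omits an entire family of pairs, including the hardest one.

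Second, and more fundamentally, the fibration step cannot be carried out as described. A fiber $C$ of any conic fibration on $X$ has $C^2=0$ and hence $C\cdot(-K_X)=2$ by adjunction, so the generic fiber meets the anticanonical divisor $D$ in \emph{two} points unless the pencil is forced to be tangent to $D$ or to pass through a singular point of $D$. Pencils with a single point at infinity therefore exist only in the non-normal-crossings reducible cases (conic plus tangent line, three concurrent lines), where a \emph{single} such pencil already suffices (\Cref{fibrazione_non_thin}); they do not exist in the normal-crossings cases, and in the irreducible case the blow-up of $\bP^2$ at one point admits exactly one conic-fibration class, namely $h-e$, whose fibers meet $\widehat D$ in two points. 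For fibers with two points at infinity the fiberwise integral points are infinite but \emph{thin} (they are Pell/$\bG_m$-type sets, cf.\ \Cref{pell}), so the ingredient you rely on — non-thinness of integral points on the generic fiber via \Cref{hilbert} — is simply unavailable; this is precisely why the paper develops the double-fibration criterion \Cref{fondamentalissimo} (with the ramification, non-thin-parameter and simple-connectedness hypotheses \ref{ipotesi_ramificazione}--\ref{ipotesi_topologica}, \Cref{trick_siegel} and \Cref{everything_lifts}) and, in the irreducible case, replaces the missing second pencil by the non-linear family of Beukers conics obtained from plane sections of the cubic surface $w^3=F$ through a line $\ell\subset\s$, parametrized by a skew line $\ell'$ so that the parameter set is non-thin. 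Your proposed remedy of auxiliary blow-ups does not create the needed pencils (blowing up boundary points leaves $X\setminus D$ unchanged, and no family of rational curves with one place at infinity along $D$ materializes this way), and it also mischaracterizes \cite{coccia_ihp}, which already works with conic fibrations having two points at infinity rather than $\bA^1$-fibrations.
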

The simply connected surfaces of \Cref{teorema_principale} are explicitly described in \Cref{caratterizzazione_complementi} (up to a $\overline{K}$-isomorphism).

\subsection{The conic fibrations method}
Throughout the paper by \emph{conic} we mean a smooth rational curve meeting the divisor at infinity in at most two points. Conic fibrations will be our main tool to prove \Cref{teorema_principale}, since they can be used to produce \lq\lq many\rq\rq{} integral points. Indeed the theory of Pell's equations ensures that, if a conic contains one integral point then, under some conditions, it contains infinitely many (see \Cref{pell}). Whenever a surface admits two independent conic fibrations, say $\lambda$ and $\mu$, one can take a fiber $C$ of $\lambda$ contanining infinitely many integral points and look at the fibers of $\mu$ passing through integral points of $C$. If the hypotheses of Pell's equation theory are met, then these fibers of $\mu$ will contain infinitely many integral points, since they contain one integral point. One may repeat the reasoning switching the roles of the two fibrations and taking any fiber of $\mu$ containing infinitely many integral points in place of $C$. If some technical hypotheses are met this allows to produce a non-thin set of integral points on the surface.
This technique was developed by Corvaja and Zannier \cite{CZ_hilbert} for rational points and elliptic fibrations to prove that $x^4+y^4=z^4+w^4$ has the Hilbert Property over $\bQ$, using two elliptic fibrations on this surface. The following result contains the technical hypotheses which allow to produce a non-thin set of integral points on certain rational double covers of $\bP^1\times\bP^1$:

\begin{theorem}\label{fondamentalissimo}
Let $(X,D)$ be a pair given by a smooth projective surface and a reduced effective divisor, both defined over a number field $K$. Let $S$ be a finite set of places of $K$ containing the archimedean places. Let $(\mathcal{X},\mathcal{D})$ be an $\O_S$-integral model for the pair, so that we have a well-defined notion of $S$-integral points with respect to $D$. Let $f \colon X \dashrightarrow \bP^1 \times \bP^1$ be a dominant rational $K$-map of degree $2$ whose locus of indeterminacy is contained in $D$. Let $\lambda \colon X \dashrightarrow \bP^1$ and $\mu \colon X \dashrightarrow \bP^1$ be the compositions of $f$ with the projections $\pi_1$ and $\pi_2$ of $\bP^1\times \bP^1$ on the first and second factor, respectively. We let $C_l \coloneqq \overline{\lambda^{-1}(l)}^{{\rm Zar}}$ and $C'_m \coloneqq \overline{\mu^{-1}(m)}^{{\rm Zar}}$ for  $l, m \in \bP^1$. Assume that, for all but finitely many $l,m \in \bP^1$, $C_l$ and $C'_m$ are smooth rational curves intersecting $D$ in two points. Assume the following are satisfied:
    \begin{enumerate}[label=\textbf{H\arabic*}]
        \item \label{ipotesi_ramificazione} the Zariski closure of the branch locus of $f$ contains at most one fiber of $\pi_1$ and, if it contains one such fiber, then $\lambda(D_{\mu})$ \emph{is not} a point, where $D_{\mu}$ is the union of the irreducible components of $D$ which are \emph{not} constant under $\mu$.
        \item \label{ipotesi_non-thin} There exists a non-thin subset $M \subset \bP^1(K)$ such that for $m \in M$ the set $(C'_{m}\setminus D)(\O_S)$ is infinite and for infinitely many $p\in (C'_{m}\setminus D)(\O_S)$ the set $(C_{\lambda(p)}\setminus D)(\O_S)$ is also infinite.
        \item \label{ipotesi_topologica} Let $E$ be the union of the curves on which both $\lambda$ and $\mu$ are constant. Then $X\setminus (D \cup E)$ is simply connected. 
    \end{enumerate} 
    Then $(X\setminus D)(\O_S)$ is not thin.
\end{theorem}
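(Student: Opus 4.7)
The plan is to argue by contradiction using the double fibration method of Corvaja--Zannier. Suppose $(X\setminus D)(\O_S)$ is thin; then it is contained in $Z(K)\cup\bigcup_{i=1}^{r}\pi_{i}(Y_{i}(K))$, where $Z\subsetneq X$ is Zariski closed and each $\pi_{i}\colon Y_{i}\to X$ is a finite, geometrically integral, surjective cover of degree $\geq 2$, defined over $K$. After enlarging $K$ and $S$ I may assume each $\pi_{i}$ extends to a map of $\O_{S}$-integral models and that Chevalley--Weil applies, so that integral points of $X$ lift to integral points on some $Y_{i}$.

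The first step uses hypothesis H2 to isolate one offending cover. For $m$ in the non-thin set $M$, the curve $C'_m\setminus D\cong\bG_m$ carries infinitely many integral points, and for $m$ outside a thin subset the intersection $Z\cap C'_m$ is finite. Hence some $\pi_{i}$ captures infinitely many lifts for such $m$, and by pigeonhole one of them, say $\pi\colon Y\to X$, does so for $m$ in a still non-thin subset $M_{0}\subseteq M$. For each $m\in M_{0}$ I extract a geometrically irreducible component $Y_m\subseteq \pi^{-1}(C'_m)$ whose normalization carries infinitely many $S$-integral points; Siegel's theorem then forces this normalization to be geometrically isomorphic to $\bP^{1}$ with at most two points at infinity, so the induced map $Y_m\to C'_m\cong\bP^{1}$ is either an isomorphism or a cyclic cover branched only over $D\cap C'_m$.

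Next I would iterate the argument with $\lambda$ replacing $\mu$, using the second clause of H2: for a Zariski dense set of lifted integral points $\tilde p\in Y(\O_S)$ the fiber $C_{\lambda(\pi(\tilde p))}$ also has infinitely many integral points. The same Siegel-based analysis yields, for $l$ in a Zariski dense subset of $\bP^{1}$, an irreducible component of $\pi^{-1}(C_l)$ that is a smooth rational curve meeting $\pi^{-1}(D)$ in at most two points and ramified only above $D\cap C_l$. Passing to the Galois closure $\widetilde\pi\colon\widetilde Y\to X$ and analyzing its branch divisor, the goal is then to show that every irreducible component of the branch locus must lie in $D\cup E$. A horizontal component $R\not\subseteq D\cup E$ dominating one of the two fibrations would meet the general fiber of the other transversally and create ramification inconsistent with the cyclic behaviour derived above; the only remaining candidate, namely a horizontal component of the branch locus of the degree-$2$ map $f$ itself, is exactly what hypothesis H1 is engineered to exclude (the restriction on fibres of $\pi_1$ contained in the branch locus of $f$, together with the condition on $\lambda(D_\mu)$, prevents such an $R$ from being compatible with the fiberwise data on both sides).

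Having confined the branch locus of $\widetilde\pi$ to $D\cup E$, the cover $\widetilde Y\to X$ restricts to an étale cover over $X\setminus(D\cup E)$. By hypothesis H3 the base is simply connected, so this étale cover is trivial, forcing $\deg\widetilde\pi=1$ and hence $\deg\pi=1$, a contradiction. The main obstacle is the ramification globalization in the third paragraph: upgrading fiberwise Siegel information into a statement about the branch divisor of the whole Galois closure, while correctly bookkeeping the ramification contributed by the degree-$2$ map $f$. This is precisely where both the degree-$2$ structure of $f$ and the specific form of H1 enter essentially; the remaining steps are standard in the double-fibration tradition.
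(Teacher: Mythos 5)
Your outline reproduces the general double-fibration flavour, but there is a genuine gap at the step where you transfer information from the $\mu$-fibres to the $\lambda$-fibres for the \emph{same} cover. The pigeonhole argument legitimately produces one cover $\pi\colon Y\to X$ lifting infinitely many integral points of $C'_m$ for $m$ in a non-thin set, and Siegel then constrains components of $\pi^{-1}(C'_m)$ over those $m$. But when you ``iterate the argument with $\lambda$'', the thinness hypothesis only says that the integral points of $C_{\lambda(p)}$ lift to the \emph{union} $Z(K)\cup\bigcup_i\pi_i(Y_i(K))$; nothing forces infinitely many of them to lift to your chosen $\pi$, so you cannot run ``the same Siegel-based analysis'' on $\pi^{-1}(C_l)$, and consequently you never confine the branch locus of a single cover (or of its Galois closure) to $D\cup E$. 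Even the input to that step is fragile: \ref{ipotesi_non-thin} gives two infinite subsets of $C'_m(\O_S)$ (the points lifting to $\pi$, and the points $p$ with $C_{\lambda(p)}(\O_S)$ infinite) whose intersection could a priori be finite; moreover, fibrewise statements of the form ``some component of $\pi^{-1}(C_l)$ is branched only over $D$'' do not control the full branch divisor, since other components can carry the extra ramification, and your ``isomorphism'' case (generically geometrically reducible $\pi^{-1}(C'_m)$) needs the thin-parametrization argument of \Cref{generically_reducible} rather than being waved through.

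The paper's proof is organized precisely to avoid pinning down the branch locus of any one cover via both fibrations. Covers are split into $\lambda$-ramified and $\lambda$-unramified (\Cref{lambda_unramified}); Siegel shows the $\lambda$-ramified ones lift only finitely many points of $C_l(\O_S)$ for $l$ outside a thin set $T$; \Cref{everything_lifts} (a coset argument for $\bG_m$-torsors, via \Cref{lemma_rango_sottogruppi}) upgrades ``all but finitely many'' to \emph{all} points of $C_l(\O_S)$ lifting to $\lambda$-unramified covers for $l\in\Lambda\setminus T$ --- an upgrade your sketch never addresses but which is indispensable, since one needs the specific point $p\in C'_m\cap C_l$ to lift; hypothesis \ref{ipotesi_topologica} is used only to deduce that $\lambda$-unramified covers are $\mu$-ramified, and the contradiction is then a count on one well-chosen fibre $C'_m$. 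In this scheme \ref{ipotesi_ramificazione} plays a role entirely different from the one you assign to it: via \Cref{solo_finiti_liftano} it guarantees that for almost all $m$ the set $\lambda(C'_m(\O_S))$ meets any prescribed thin subset of $\bP^1(K)$ only finitely often (because the branch points of the degree-$2$ map $\lambda|_{C'_m}$ are not both constant, resp.\ $\lambda(C'_m\cap D)$ consists of two points), which is what makes $\Lambda$ non-thin (\Cref{just_need_one}) and allows the final choice of $m\notin T'$ and $p$ with $\lambda(p)\notin T$. Your proposed use of \ref{ipotesi_ramificazione} to exclude a horizontal branch component related to the branch curve of $f$ is neither needed nor correct: covers branched along that curve are perfectly possible and are disposed of by the Siegel argument. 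Finally, you may not enlarge $K$ (the statement is for the given $K$ and $S$), and integral lifts on the $Y_i$ come from the valuative criterion (\Cref{lift_integrale}) after enlarging $S$, not from Chevalley--Weil, since the $\pi_i$ are not \'etale.
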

\Cref{fondamentalissimo} will be applied to obtain certain cases of \Cref{teorema_principale} where the divisor at infinity is reducible.
The proof of \Cref{fondamentalissimo} can be found in \Cref{section4} together with a motivation for its hypotheses. For now we limit ourselves to make a few remarks.
\begin{remark}
    For a concrete example where \Cref{fondamentalissimo} applies, take $X$ a smooth cubic surface in $\bP^3$, $D$ a hyperplane section of $X$, $\ell$ and $ \ell'$ two coplanar lines contained in $X$, all defined over a number field $K$. Let $\lambda \colon X \to \bP^1$ (resp. $\mu \colon X \to \bP^1$) be the conic fibration associated to the pencil of planes containing $\ell$ (resp. $\ell'$). Consider the natural $\O_K$-integral model obtained taking the closures of $X$ and $D$ in $\bP^3_{\O_K}$.  
    Suitably extending $K$ and $S$, one can show that the hypotheses of \Cref{fondamentalissimo} are satisfied for $f = \mu_{\ell} \times \mu_{\ell'} \colon X \to \bP^1\times \bP^1$, so that we obtain the potential IHP for $X\setminus D$. Clearly, \Cref{teorema_principale} includes this as a special case.
\end{remark}
\begin{remark}
    A surface $X$ satisfying the hypotheses of \Cref{fondamentalissimo} is necessarily rational. This is because $X$ must be rationally connected, and rationally connected surfaces are rational.
\end{remark}
\begin{remark}\label{ipotesi_semplificata}
    \Cref{punti_infiniti_giusto} ensures that, if $K$ has a complex place and $\# S \ge 2$, whenever a smooth $K$-rational curve with at most two points at infinity contains one $S$-integral point it actually contains infinitely many. In particular, for such a ring of integers $\O_{K,S}$, hypothesis \ref{ipotesi_non-thin} reduces to:
    \begin{itemize}
        \item there exists a non-thin set $M \subset \bP^1(K)$ such that for $m \in M$ the set $(C'_{m}\setminus D)(\O_S)$ non-empty.
    \end{itemize}
    In this paper we will apply \Cref{fondamentalissimo} to prove \Cref{teorema_principale}, which is a potential result, so we will extend $K$ and $S$ and use the above simplified form of hypothesis \ref{ipotesi_non-thin}. However in order to apply \Cref{fondamentalissimo} on other rings of integers, for instance to study $\bZ$-integral points, one needs the general form of \ref{ipotesi_non-thin}. This is because there are conics with a non-empty yet finite set of $\bZ$-points, such as $x^2+y^2=1$. In a forthcoming paper \cite{coccia_sarnak} the author will use \Cref{fondamentalissimo} to give sufficient criteria for an affine cubic surface to have a non-thin set of $\bZ$-integral points. Moreover in \Cref{esempio_non_potenziale} we have included an instance of \Cref{teorema_principale} where the IHP holds over $\bZ$. 
\end{remark}
\begin{remark}
    In some cases one can prove the IHP using a single conic fibration: 
    this happens if the surface has a fibration in conics with \emph{only one} point at infinity and the set of fibers containing at least one integral point is not thin (see \Cref{fibrazione_non_thin}).  
\end{remark}

\subsection{Irreducible divisor at infinity}
The most interesting case of \Cref{teorema_principale}, which is not covered by \Cref{fondamentalissimo} and \Cref{fibrazione_non_thin}, is the one of the Hirzebruch surface obtained by blowing up $\bP^2$ in one point, with divisor at infinity given by the strict transform of an irreducible cubic curve passing through the blown-up point.
Specifically, we will prove the following:

\begin{theorem}\label{irreducible}
    Let $D$ be an absolutely irreducible cubic $K$-curve in $\bP^2$ and $P$ a smooth $K$-point of $D$. Let $\sigma \colon X \to \bP^2$ be the blow-up at $P$ and let $\widehat{D}$ be the strict transform of $D$. Then the set of integral points of $X \setminus \widehat{D}$ is potentially not thin.
\end{theorem}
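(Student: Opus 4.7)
The strategy is to exploit the unique $\bP^1$-ruling $\pi \colon X \to \bP^1$, whose fibers are the strict transforms of lines through $P$ (viewing $X$ as the Hirzebruch surface $\mathbb{F}_1$), together with the exceptional divisor $E$ as an integral section. After enlarging $K$ and $S$ so that $\#S \ge 2$, $K$ has a complex place, and all relevant data is defined over $K$, \Cref{punti_infiniti_giusto} applies on every fiber. A general fiber $L_b = \pi^{-1}(b)$ is a smooth rational curve meeting $\widehat{D}$ in exactly two points (the two intersections of the line through $P$ with $D$ other than $P$), while $E$ is a section of $\pi$ with $E \cdot \widehat{D} = -E^2 = 1$ since $\widehat{D} \sim 3H - E$ on $\mathbb{F}_1$; hence $E$ meets $\widehat{D}$ at a single point $T$, the tangent direction of $D$ at $P$.

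The pair $(E, T) \cong (\bP^1, \infty) = \bA^1$, so by \Cref{hilbert} the integral points $(E \setminus T)(\O_S)$ form a non-thin subset of $E(K)$. Transporting via the isomorphism $\pi|_E \colon E \xrightarrow{\sim} \bP^1$ produces a non-thin set $B \subseteq \bP^1(K)$ such that for each $b \in B$ the fiber $L_b$ contains the integral point $e_b := (\pi|_E)^{-1}(b)$. Invoking \Cref{punti_infiniti_giusto} on each $L_b$ with $b \in B$, we then get infinitely many integral points on each such fiber.

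To upgrade this to non-thinness of $(X \setminus \widehat{D})(\O_S)$ in $X(K)$, suppose by contradiction that it is contained in a thin set $T_0 \cup \bigcup_{i=1}^n f_i(Y_i(K))$, with $T_0 \subsetneq X$ closed and $f_i \colon Y_i \to X$ finite of degree $\ge 2$ with $Y_i$ geometrically integral; the Zariski density provided by \Cref{soluzione_hassett} absorbs the $T_0$ term. Applying Hilbert's irreducibility theorem to each composition $\pi \circ f_i \colon Y_i \to \bP^1$ yields a non-thin set $B_i \subseteq \bP^1(K)$ over which the pullback $f_i^{-1}(L_b) \to L_b$ is geometrically irreducible of degree $\ge 2$, and a further application of HIT on $L_b \cong \bP^1$ ensures that $f_i(f_i^{-1}(L_b)(K)) \subseteq L_b(K)$ is thin. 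For $b$ in the non-thin intersection $B \cap \bigcap_i B_i$, one must then produce an integral point on $L_b$ outside each of these thin images. This comparison is the main technical obstacle: the Pell orbit of $e_b$ is itself thin in $L_b(K) \cong \bP^1(K)$, so the conclusion does not follow from cardinality alone. The argument instead exploits the $\mathbb{G}_m$-torsor structure of $L_b \setminus \{A_b, B_b\}$ together with a Kummer-type analysis of the pulled-back covers (controlling their ramification on $L_b$) to extract integral points in the Pell orbit that cannot lift to any $Y_i$, yielding the desired contradiction.
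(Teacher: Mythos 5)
There is a genuine gap, and it sits exactly where you place your ``main technical obstacle''. With a single fibration $\lambda$ whose general fiber meets $\widehat{D}$ in \emph{two} points, the fiberwise integral points form a rank-one group orbit (a Pell orbit), hence a thin subset of $L_b(K)$, and there really do exist finitely many covers of $X$ against which no fiberwise argument can win: take $f\in K(X)^*$ with $\Div(f)=\widehat{D}-3L_{b_0}-2E$ (possible since $\widehat{D}\sim 3h-e$, $L_{b_0}\sim h-e$, $E\sim e$ in $\Pic(X)$) and the double cover $w^2=f$; restricted to a general fiber $L_b$ this is totally ramified at the two points at infinity and \'etale on the affine part, i.e.\ an isogeny-plus-translation of the $\bG_m$-torsor $L_b\setminus\widehat{D}$, so it (together with a twist) can a priori lift \emph{all} integral points of every fiber outside a thin set of $b$. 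Such covers are precisely the $\lambda$-unramified covers of \Cref{lambda_unramified}; your proposed ``Kummer-type analysis on $L_b$'' cannot rule them out, because their restriction to each fiber is exactly a Kummer cover of the torsor, and \Cref{everything_lifts} even shows that if they lift almost all fiberwise integral points they lift all of them. So the contradiction cannot be extracted from the pencil of lines through $P$ alone; this is why the paper states explicitly that \Cref{irreducible} does \emph{not} follow from \Cref{fondamentalissimo}.

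The missing idea is a second, non-linear family of curves transverse to the problem: the Beukers conics. In the paper these are realized as $\rho$-images of the fibers of the conic fibration $\mu_\ell$ on the cubic surface $\s\colon w^3=F$ attached to a flex line of $D$. One needs (a) a \emph{non-thin} set of parameters $u$ with $\widehat{C}_u(\O_S)$ infinite, obtained by parametrizing the conics by a skew line $\ell'\subset\s$ (a section of $\mu_\ell$, available after adjoining cube roots of unity; \Cref{non_thin_beukers_family}); (b) the fact that, since $X\setminus\widehat{D}$ is simply connected (\Cref{caratterizzazione_complementi}), every $\lambda$-unramified cover is branched along some fiber $\widehat{L}_l$, and by \Cref{beuker_ramification} the conics $\widehat{C}_u$ meet such a fiber transversally at affine points with non-constant branch data, so by \Cref{trick_siegel} these covers lift only finitely many integral points of $\widehat{C}_u$; and (c) since the $C_u$ are not a linear system on $X$, the geometrically-reducible-pullback case must be handled upstairs on $\s$ via \Cref{s_abs_irred} and \Cref{generically_reducible}. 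Your first two paragraphs (the ruling, the section $E$, \Cref{punti_infiniti_giusto}) reproduce \Cref{non_thin_lambda} correctly, but without the second family the contradiction in your final paragraph does not go through.
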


\Cref{irreducible} does not follow from \Cref{fondamentalissimo} since $X$ has only one fibration in conics with two points at infinity, namely the pencil $\lambda \colon X\to \bP^1$ of (strict transforms) of lines through the blown-up point $P$. Still the same strategy of \Cref{fondamentalissimo} applies: the idea is to employ the one-dimensional family of plane conics used by Beukers in \cite{beukers} to prove potential density for the complement of a smooth cubic curve in $\bP^2$. 

The main issue one encounters is that the conics with infinitely many integral points constructed by Beukers are parametrized by a \emph{thin} subset of $\bP^1$, while we need a non-thin set of parameters as in \ref{ipotesi_non-thin} of \Cref{fondamentalissimo}. The way out is to reinterpret Beukers' conics in terms of the cubic surface $\s \colon w^3=F$, where $F \in K[x,y,z]$ is a homogeneous cubic equation for $D$. To a line $\ell \subset \s$ one can associate the pencil of planes through $\ell$, which gives a fibration in conics $\mu_{\ell} \colon \s \to \bP^1$. Let $\rho \colon \s \to \bP^2$ be the map $[x:y:z:w] \mapsto [x:y:z]$. Then the Beukers' conics are the $\rho$-images of the fibers of $\mu_{\ell}$. 

In this reinterpretation the conics with infinitely many integral points constructed by Beukers are the $\rho$-images of the fibers of $\mu_{\ell}$ passing through points of $\ell$ which are integral with respect to the plane at infinity $H \coloneqq (w=0)$. The set parametrizing these conics is $\mu_{\ell}((\ell\setminus H)(\O_S))$, which is thin since $\ell$ is a bisection of $\mu_{\ell}$.
However, after suitable enlargements of $K$ and $S$, we can find a line $\ell'$ which is skew with $\ell$ (hence a section of $\mu_{\ell}$) and such that $(\ell'\setminus H)(\O_S)$ is non-thin (by \Cref{pell}). We can then consider the Beukers' conics parametrized by points of $(\ell'\setminus H)(\O_S)$: this allows to find the sought non-thin set of parameters. In conclusion, after dealing with some technical complications, one can apply the methods of the proof of \Cref{fondamentalissimo} and obtain \Cref{irreducible}, thereby completing the proof of \Cref{teorema_principale}.

We stress that our construction of a non-thin set of parameters requires to extend the base field by adding the third roots of unity. This is not the case for Beukers' parametrization, which in several cases is defined over $\bQ$, for instance for the complement in $\bP^2$ of $x^3+y^3+z^3=0$. However there are also many cases where one can produce a non-thin set of parameters over $\bQ$ using a more involved argument: one such example is sketched in \Cref{esempio_non_potenziale} and it exploits two conic fibrations $\mu_{\ell}$ and $\mu_{\ell'}$ associated to two \emph{coplanar} $\bQ$-lines $\ell, \ell' \subset \s$ (where we are using the notation of the paragraphs above). 
This method of using linear systems of planes associated to coplanar lines in cubic surfaces will be explored in a forthcoming paper \cite{coccia_sarnak}, where the author will give sufficient conditions for a cubic surface to have a non-thin set of $\bZ$-integral points.

\subsection{Acknowledgements} I would like to warmly thank my advisor Dragos Ghioca for his constant encouragement and support, both technical and moral, and for reading this work and providing several suggestions that improved the presentation. I would also like to thank Pietro Corvaja for having introduced me to this topic, as well as for many fruitful exchanges in these years. Moreover, I would like to thank Umberto Zannier for an email that prompted me to study the Hilbert Property for integral points of cubic surfaces with a singular divisor at infinity, which ultimately led me to the the results of this paper and my forthcoming work \cite{coccia_sarnak}. I would also like to thank Umberto Zannier for inviting me to the Cetraro meetings in the past summers, which have been fundamental in the development of this work. I would like to thank Ariyan Javanpeykar for his interest and useful comments. Finally, I would like to thank Francesco Ballini, Julian Demeio, Federico Scavia, and Francesco Zucconi for their interest and fruitful conversations.

	\section{Preliminaries}

\subsection{Integral points}

We will briefly recall some basic definitions and results on integral points on varieties, using \cite{ascher_turchet,corvaja,CZnuovo,Hats} as our main sources. Other general references on Diophantine geometry are \cite{L, lang_nt3,  hindry_silverman, B-G}.

Let $v$ be a finite place of a number field $K$ and $\mathcal{O}_v$ be the valuation ring at $v$, i.e. $\mathcal{O}_v=\{x\in K \, \vert\, \lvert x \rvert_v \le 1\}$. We denote by $\mathfrak{m}_v=\{x\in K \, \vert \, \lvert x \rvert_v<1 \}$ its maximal ideal and by $K(v)=\mathcal{O}_v/\mathfrak{m}_v$ its residue field (which is finite).
If $S$ is a finite set of places containing the archimedean ones we define the ring of $S$-integers as
\begin{equation*}
    \mathcal{O}_S=\{x\in K \, \vert \, \lvert x\rvert_v \le 1, \forall v \notin S \}.
\end{equation*}
Let us give a first definition of integral points on a quasi-projective variety.
\begin{definition}
    Let $U$ be a quasi-projective variety defined over a number field $K$. An \emph{integral model} over $\O_S$ for $U$ is a flat scheme $\mathcal{U} \to \Spec \O_{S}$ whose generic fiber is isomorphic to $U$. An $S$-integral point of this model is a section of $\mathcal{U} \to \Spec \O_{S}$. We denote the set of $S$-integral points by $\mathcal{U}(\O_S)$.  By abuse of notation we will write $U(\O_S)$ in place of $\mathcal{U}(\O_S)$ whenever the choice of the integral model is clear from the context or when the sentence is true for any choice of the integral model.
\end{definition}
We will also use a second definition of $S$-integral points:
\begin{definition}
    A \emph{pair} is a couple $(X,D)$ where $X$ is a projective variety and $D$ a reduced effective divisor, called \emph{divisor at infinity}. An integral model $(\mathcal{X},\mathcal{D})\to \Spec \O_S$ for the pair consists of two proper $\O_S$-integral models $\mathcal{X}$ and $\mathcal{D}$ of $X$ and $D$, respectively, such that $\mathcal{D}$ is a reduced effective divisor of $\mathcal{X}$. An $S$-integral point of the couple $(X,D)$, also called an $S$-integral point of $X$ with respect to $D$, is a section of $\mathcal{X}\to \Spec \O_S$ avoiding $\mathcal{D}$. We denote the set of $S$-integral points by $(\mathcal{X}\setminus \mathcal{D})(\O_S)$. By abuse of notation we will write $(X\setminus D)(\O_S)$ in place of $(\mathcal{X}\setminus \mathcal{D})(\O_S)$ whenever the choice of the integral model is clear from the context or when the sentence is true for any choice of the integral model.
\end{definition}  
These two definitions of integral points are essentially equivalent. Specifically, let $U$ be a quasi-projective variety and let $X$ be a compactification of $U$ such that $D\coloneqq X\setminus U$ is a divisor. For any given model $(\mathcal{X},\mathcal{D})\to \Spec \O_S$ of the pair $(X,D)$, we obtain a corresponding integral model $\mathcal{U} \coloneqq \mathcal{X}\setminus \mathcal{D} \to \Spec \O_S$ for $U$: by definition, the sets $\mathcal{U}(\O_S)$ and $(X\setminus D)(\O_S)$ are equal. We stress that the notion of integrality depends on the model: the same rational point can be integral for one model and not integral for a different one. 

The projective space $\bP^n$ has a canonical $\O_S$-integral model, namely $\bP^n_{\O_S}$. This allows to associate an integral model to a projective embedding $X \hookrightarrow \bP^n$ simply by taking the closure of $X$ inside $\bP^n_{\O_S}$. Then, given a pair $(X,D)$, one canonically associate to any projective embedding $X \hookrightarrow \bP^n$ an integral model for $(X,D)$ induced by $\bP^n_{\O_S}$. In this case the non-canonicity of the model is due to the different possible choices of a projective embedding for $X$.

\begin{remark}\label{notation_clarification}
Let $(X,D)$ be a pair defined over $K$ together with a chosen $S$-integral model $(\mathcal{X},\mathcal{D})$. Let $C\subset X$ be a closed subvariety. By abuse of notation we will write $C(\O_S)$ for the set $C(K)\cap (X\setminus D)(\O_S)$. Equivalently, $C(\O_S)=(\mathcal{C} \setminus \mathcal{D})(\O_S)$, where $\mathcal{C}$ is the closure of $C$ in $\mathcal{X}$.
\end{remark}

\begin{definition}
    Let $U$ be a quasi-projective $K$-variety and $\mathcal{U} \to \Spec \O_{K,S}$ an integral model of $U$. We say that the integral points on $\mathcal{U}$ are potentially dense if there exists a finite extension $K'$ of $K$ and a finite set of places $S'$ of $K'$ containing all the places above $S$, such that the base change $\mathcal{U}' \to \Spec \O_{K',S'}$ has a Zariski dense set of integral points.
\end{definition}
\begin{remark}\label{density_model_dependence}
   We stress that while the notion of integrality for a given rational point depends on the choice of the model $\mathcal{U}$, the notion of potential density only depends on the geometry of $U$ over $\overline{K}$.
   Therefore when stating potential results we will not specify the choice of the model, and in the proofs we will choose conveniently the compactifications and the integral models that will best suit our needs.
\end{remark}
Let us define the crucial geometric invariant of curves which controls the distribution of integral points:
\begin{definition}
    Let $C$ be an absolutely irreducible quasi-projective curve. Let $\widetilde{C}$ be a smooth compactification of a normalization $C'$ of $C$ and let $\widetilde{D}\coloneqq \widetilde{C}\setminus C'$. We define the \emph{Euler characteristic} of $C$ as $\chi(C)\coloneqq \deg(K_{\widetilde{C}}+D)=2g(\widetilde{C})-2+\# \widetilde{D}$. Similarly, for a pair $(C,D)$ with $C$ an absolutely irreducible projective curve and $D$ a reduced effective divisor of $C$, pick a normalization $\pi\colon C' \to C$ of $C$ and let $D'\coloneqq \pi^{-1}(D)$: we define the Euler characteristic of the pair $(C,D)$ as $\chi(C-D)= 2g(C')-2+\# D'$.
\end{definition}
Whenever the Euler characteristic of a curve is positive the integral points are finite, as proven by Siegel and Faltings in their celebrated theorems:
\begin{theorem}[Siegel-Faltings]\label{siegel}
    Let $C$ be an absolutely irreducible quasi-projective curve defined over a number field $K$ and such that $\chi(C)>0$. Then, for any integral model of $C$, the integral points are always finite.
\end{theorem}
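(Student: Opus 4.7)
The plan is to reduce the statement to three well-known classical results by splitting on the genus of the smooth compactification. First I would replace $C$ with its smooth locus inside the normalization $C'$: if $\pi \colon C' \to C$ is the normalization, then after enlarging $S$ to contain the finitely many places of bad behaviour of $\pi$, every $S$-integral point of $C$ lifts to an $S$-integral point of $C'$ (for the induced integral model of $C'$ with respect to the divisor $\widetilde{D} = \widetilde{C}\setminus C'$). Since finiteness on $C'$ pulls back to finiteness on $C$, we may assume $C = C'$ is smooth. Note that the condition $\chi(C) = 2g(\widetilde{C}) - 2 + \#\widetilde{D} > 0$ is preserved and dictates the case analysis.

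Next I would split according to the value of $g \coloneqq g(\widetilde{C})$. If $g \geq 2$, then $\widetilde{C}(K)$ is itself finite by Faltings' theorem (the former Mordell conjecture), and so $C(\O_S) \subseteq \widetilde{C}(K)$ is a fortiori finite, independently of $\#\widetilde{D}$. If $g = 1$, then $\chi(C)>0$ forces $\#\widetilde{D}\ge 1$, so $C$ is obtained from the elliptic curve $\widetilde{C}$ by deleting at least one $\overline{K}$-point. After enlarging $K$ to have such a point rational and using Mordell--Weil to see $\widetilde{C}(K)$ is finitely generated, one invokes Siegel's theorem on integral points on elliptic curves (which rests on the theory of heights and Diophantine approximation \`a la Thue--Siegel--Roth applied to the difference of two points on $\widetilde{C}$). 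If $g = 0$, then $\chi(C) > 0$ forces $\#\widetilde{D}\ge 3$, so $C$ embeds as an open subset of $\bP^1_K$ minus at least three points; after possibly enlarging $K$ and $S$, this reduces to the classical Siegel theorem for $\bP^1\setminus\{0,1,\infty\}$, equivalently to the finiteness of solutions of the $S$-unit equation $u+v=1$ with $u,v\in \O_S^{\times}$.

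The final step is the verification that the choice of integral model does not matter: any two proper integral models of the same pair $(\widetilde{C},\widetilde{D})$ agree outside finitely many places of $\Spec \O_{K,S}$, so after a further finite enlargement of $S$ the two notions of $S$-integrality coincide on the generic fiber, and finiteness of integral points is preserved. Combining the three cases yields the statement in full generality.

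The main obstacle, of course, is that each of the three inputs — Faltings, Siegel on elliptic curves, and Siegel on $\bP^1$ minus three points — is itself a deep theorem. In a self-contained treatment one would expect the bulk of the work to lie in the genus $g\ge 2$ case (Faltings), while the arithmetic heart of the $g\le 1$ cases is the Thue--Siegel--Roth approximation theorem used through the standard embedding of integral points into a finitely generated group (the Mordell--Weil group for $g=1$, the $S$-units for $g=0$). For the present paper these results are invoked as black boxes, so no further proof is needed beyond the reduction just outlined.
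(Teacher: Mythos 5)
The paper does not prove this statement at all: it is quoted as the classical Siegel--Faltings theorem and used as a black box, exactly as you anticipate in your final paragraph. Your reduction is the standard one and is correct: pass to the normalization (a finite map, so integral points lift after enlarging $S$, as in \Cref{lift_integrale}), then split by genus and invoke Faltings for $g\ge 2$, Siegel for affine curves of genus $1$, and the $S$-unit equation for $\bP^1$ minus at least three points; enlarging $K$ or $S$ and changing the integral model only enlarges the set of integral points up to finitely many places, so finiteness descends. The only small imprecision is the claim that \emph{every} $S$-integral point of $C$ lifts to a $K$-rational (hence, after enlarging $S$, integral) point of the normalization: points of $C(K)$ lying over singular points of $C$ need not lift to $K$-points of $C'$, but these are finitely many and irrelevant to the finiteness conclusion.
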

\begin{remark}\label{remark_serre}
    If $C$ is a $K$-irreducible curve which is geometrically reducible, then $C(K)$ is always finite: indeed an easy Galois theory argument shows that the rational points are a subset of the intersection points of the absolutely irreducible components of $C$, and these points are always finite in number.
\end{remark}
Conversely, whenever the Euler characteristic is zero or negative, we have the following:
\begin{theorem}\label{pell}
    Let $C$ be an absolutely irreducible projective curve over $K$ whose normalization has genus $0$ and let $D$ be a reduced $K$-divisor on $C$ given by the sum of (at most two) distinct smooth $\overline{K}$-points of $D$. Let $S$ be a set of places of $K$ containing the archimedean ones. Assume that $C(\O_S)$ contains at least one non-singular point (for a given choice of a model). We have that:
    \begin{itemize}
        \item if $D$ is empty or a single $K$-rational point then $(C\setminus D)(\O_S)$ is infinite and non-thin (see \Cref{def_thin} for a definition of thin);
        \item if $D=P_1+P_2$ is the sum of two distinct $K$-rational points and $\lvert S \rvert \ge 2$ then $(C\setminus D)(\O_S)$ is infinite;
        \item  if $D=P_1+P_2$ is the sum of two points which are quadratic conjugates in a quadratic extension $K'$ of $K$ and there exists at least one place $v \in S$ which splits in $K'$ (that is, $P_1,P_2 \in C(K_v)$) then $(C\setminus D)(\O_S)$ is infinite.
    \end{itemize}
\end{theorem}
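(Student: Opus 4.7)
The plan is to reduce the statement to one about $\mathbb{P}^1_K$ minus at most two points and then invoke classical facts. Let $\pi \colon C' \to C$ be the normalization; by hypothesis $C$ has a non-singular $S$-integral point, which lifts to a $K$-rational point of $C'$. As $C'$ is a smooth genus-$0$ $K$-curve with a rational point, $C' \cong \mathbb{P}^1_K$. Since the points of $D$ are non-singular on $C$, the divisor $D' := \pi^{-1}(D)$ has the same cardinality and residue fields as $D$, and every point of $(C' \setminus D')(\O_S)$ maps to a point of $(C \setminus D)(\O_S)$. It therefore suffices to prove the three bullets for $(\mathbb{P}^1_K, D')$.

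If $D$ is empty, $(\mathbb{P}^1 \setminus D)(\O_S) = \mathbb{P}^1(K)$ is infinite and non-thin. If $D = \{P\}$ with $P$ a $K$-rational point, a $K$-automorphism of $\mathbb{P}^1$ sends it to $\infty$, turning the integral points into $\mathbb{A}^1(\O_S) = \O_S$, which is infinite and non-thin by \Cref{hilbert}. If $D = P_1 + P_2$ with both $P_i \in \mathbb{P}^1(K)$, an automorphism sends them to $0$ and $\infty$, so integral points become $\O_S^{\times}$, which is infinite of Dirichlet rank $|S|-1 \geq 1$ when $|S| \geq 2$.

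In the remaining case $D = P_1 + P_2$ with $P_1, P_2$ quadratic conjugates over $K'/K$, I would identify $U := \mathbb{P}^1_K \setminus \{P_1, P_2\}$ with the norm-$1$ torus $T = \ker\bigl(\Res_{K'/K}\mathbb{G}_m \xrightarrow{N} \mathbb{G}_m\bigr)$: writing $\alpha \in K'$ for an affine coordinate of $P_1$ and $\bar\alpha$ for its conjugate, the function $u = (t-\alpha)/(t-\bar\alpha)$ gives a $K'$-isomorphism $U_{K'} \cong \mathbb{G}_{m,K'}$ under which $K$-rational points of $U$ correspond to $u \in K'^{\times}$ with $u\bar u = 1$. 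Letting $S'$ be the set of places of $K'$ above $S$, the $S$-integral points of $U$ then form (up to finite index, depending on the integral model) the group of norm-$1$ $S'$-units $\{u \in \O_{K',S'}^{\times} : N_{K'/K}(u) = 1\}$, whose rank is $|S'| - |S|$ by a standard application of Dirichlet's theorem to the norm exact sequence. A split place $v \in S$ contributes two places to $S'$ against one to $S$, so the splitting hypothesis guarantees $|S'| - |S| \geq 1$ and the group is infinite.

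The main technical point I anticipate is making precise, in the conjugate case, the identification of $S$-integral points of $U$ with the norm-$1$ $S'$-units: up to a finite index this is a matter of spreading out and applying the valuative criterion of properness, but one has to be a bit careful because the integral model depends on the choice of compactification. None of the other steps presents any real obstacle, reducing in essence to Dirichlet's $S$-unit theorem and Hilbert's irreducibility on the affine line.
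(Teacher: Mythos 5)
Your reduction to $\bP^1_K$ minus at most two points, and the rank computations (Dirichlet for $\O_S^\times$, rank $|S'|-|S|$ for the norm-one $S'$-units, a split place forcing $|S'|-|S|\ge 1$), are fine and match the standard arguments in the references the paper cites (Bilu, Corvaja, Hassett--Tschinkel); the paper itself gives no argument beyond those citations plus the remark that non-thinness is Hilbert's theorem. The genuine gap is in how you pass from the unit group to the integral points of the \emph{given} model. Your key claim -- that the $S$-integral points of $U$ form, ``up to finite index,'' the group of norm-one $S'$-units (and, in the earlier bullets, that they ``become'' $\O_S$ or $\O_S^\times$) -- is false for an arbitrary model, and your proposed fix (spread out and compare models) cannot repair it: two models of $U$ are only identified after enlarging $S$, and enlarging $S$ is not allowed here, since the theorem is applied in the paper with one integral point produced for a specific $S$ and must output infinitude for that same $S$. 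Concretely, take $K=\bQ$, $S=\{\infty\}$, and the model $x^2-2y^2=3z^2$ in $\bP^2_{\bZ}$ with the divisor $z=0$: the two points at infinity are conjugate over $\bQ(\sqrt 2)$, the archimedean place splits, the norm-one unit group is infinite, yet the set of integral points is empty. So the set of integral points is in general only a finite (possibly empty) union of orbits, never ``the unit group up to finite index,'' and the hypothesis that there is one integral point must be used in an essential way -- which your write-up never does in the two-points-at-infinity cases.

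The correct mechanism, which is exactly what the cited sources (and the paper itself, in the proof of \Cref{everything_lifts} via \cite[Proposition~5.1]{Hats}) use, is: $U$ is a torsor under a $K$-form $G$ of $\bG_m$ (or under $\bG_a$ in the one-point case); one produces an $\O_S$-model $\mathcal{G}$ of $G$ acting on the \emph{given} model of $U$, or equivalently a finite-index (congruence) subgroup of the norm-one $S'$-unit group that stabilizes $\mathcal{U}(\O_v)$ at the finitely many places where the given model differs from the standard one; this subgroup still has positive rank under your hypotheses, acts with trivial stabilizers, and the orbit of the one given integral point is then infinite and contained in $(C\setminus D)(\O_S)$. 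The same caveat applies to your non-thinness claim in the first bullet: for an arbitrary model one only gets a coset of an ideal of $\O_S$ inside the integral points, which is still non-thin, but this again uses the given point and an integral translation action, and the $S$-integer (and congruence) refinement of \Cref{hilbert} rather than the $\bZ$-statement alone. With these corrections your outline becomes the standard proof; as written, the step identifying integral points with units is the missing idea.
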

\begin{proof}
    See \cite{bilu}, \cite[Corollary~5.3.3]{corvaja} or \cite[Corollary~5.4]{Hats}. The statement about the set being non-thin is essentially Hilbert's irreducibility theorem (\Cref{hilbert}).
\end{proof}
We will repeteadly use the following:
\begin{cor}\label{punti_infiniti_giusto}
    In the notation of the above Theorem, if $K$ has a complex place and $\# S \ge 2$, then $(C\setminus D)(\O_S)$ is either empty or infinite.
\end{cor}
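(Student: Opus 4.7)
The plan is to reduce the corollary to a direct application of \Cref{pell} by checking, in each of the three possible shapes of $D$, the one extra hypothesis it imposes. If $(C\setminus D)(\O_S)$ is empty there is nothing to prove, so I assume it contains a point $P$. Passing to the normalization $\pi\colon C' \to C$ if necessary, I can arrange to have a non-singular integral point on hand: since $D$ lies in the smooth locus of $C$ by assumption, $D' \coloneqq \pi^{-1}(D)$ has the same shape as $D$, and the pullback of an $\O_S$-integral point of $(C,D)$ produces an $\O_S$-integral point of $(C',D')$, to which \Cref{pell} may be applied.

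Next I split by the shape of $D$. When $D$ is empty or a single $K$-rational point, \Cref{pell} applies directly and yields an infinite (indeed non-thin) set of integral points. When $D = P_1 + P_2$ is the sum of two distinct $K$-rational points, the additional hypothesis in \Cref{pell} is $\#S \ge 2$, which is given. The only delicate case is $D = P_1 + P_2$ with $P_1, P_2$ quadratic conjugates in some quadratic extension $K'/K$; there I need to produce a place $v \in S$ that splits in $K'$.

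This is precisely where the complex place hypothesis enters. Let $v \in S$ be a complex place of $K$, so that $K_v \cong \mathbb{C}$. Because $K_v$ is algebraically closed and $K'/K$ is separable of degree two, the étale algebra $K' \otimes_K K_v$ is isomorphic to $\mathbb{C} \times \mathbb{C}$, so $v$ admits two distinct extensions to $K'$, i.e.\ $v$ splits completely in $K'$. \Cref{pell} then applies and produces infinitely many $S$-integral points. I do not anticipate any serious obstacle; the only mildly subtle point is the normalization step, but because $D$ sits in the smooth locus of $C$ this reduction is essentially automatic.
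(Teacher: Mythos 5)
Your proof is correct and is essentially the intended argument: the paper states this corollary without a separate proof, treating it as immediate from \Cref{pell}, and the entire content is exactly what you identify — a complex place of $K$ splits in every quadratic extension $K'/K$ (so the third case of \Cref{pell} applies), while $\#S\ge 2$ covers the case of two $K$-rational points at infinity and the remaining cases need no extra hypothesis. The only caveat is your normalization detour, which is not as automatic as you claim (a singular $K$-rational integral point need not lift to a $K$-rational point of the normalization, and integrality of the lift may require enlarging $S$ via \Cref{lift_integrale}); however this is harmless here, since the corollary inherits the hypothesis of \Cref{pell} that there is a non-singular integral point, and in the paper it is only ever applied to smooth curves.
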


\subsection{Thin sets and the Hilbert Property}
We will now briefly recall some definitions and results regarding thin sets and the Hilbert Property. For a detailed discussion on these topics, we refer the reader to \cite{SeMW, SeTGT, CZnuovo, corvaja}.

\begin{definition}\label{def_cover}
A $K$-\emph{cover} $\pi \colon Y\to X$ is a finite surjective $K$-morphism of finite degree between two $K$-integral varieties. A point $p \in X(K)$ \emph{lifts} to $Y$ if there exists $q \in Y(K)$ such that $p=\pi(q)$.
\end{definition}
\begin{definition}\label{def_thin}
    Let $X$ be an integral $K$-variety and let $\Omega \subseteq X(K)$. We say that $\Omega$ is \emph{thin} if there exists finitely many $K$-covers $\pi_i \colon Y_i \to X$ of degree $>1$ and a proper closed subvariety $Z \subset X$ such that $\Omega \subseteq Z(K) \cup \bigcup \pi_i(Y_i(K))$.  
\end{definition}

By definition non-Zariski dense sets of rational points are always thin.
\begin{definition}\label{definition_ihp}
    Let $\mathcal{X} \to \Spec \O_S$ be an integral model for an integral $K$-variety $X$. We say that $\mathcal{X}$ has the \emph{Integral Hilbert Property} (IHP) if $\mathcal{X}(\O_S) \subset X(K)$ is not thin.
\end{definition}
If $\mathcal{X}$ has the IHP then the $S$-integral points are Zariski dense, so that the IHP is a stronger property than Zariski density.
\begin{remark}
    \Cref{definition_ihp} is the same as saying that $\mathcal{X}$ has the Hilbert Property as an arithmetic scheme in the sense of \cite[Definition $1.2$]{luger}.
\end{remark}

\begin{remark}\label{HP_model_dependence}
    Similarly to \Cref{density_model_dependence}, the \emph{potential} Hilbert Property only depends on the geometry over $\overline{K}$ of the quasi-projective variety. In particular, when studying the IHP potentially, we will conveniently choose a projective compactification of the quasi-projective variety and an integral model. Moreover we will frequently enlarge the divisor at infinity $D$: if the IHP holds for a larger divisor at infinity $D'$ containing $D$, then it holds also for the original variety, since the integral points with respect to $D'$ are also integral with respect to $D$.
\end{remark}
\begin{remark}\label{reduction_finite}
    In order to prove the IHP for a given $K$-variety $X$ it is sufficient to prove that for all finite sets of \emph{finite $K$-morphisms} $\pi_i \colon Y_i \to X$ of degree $\ge 2$ with $Y_i$ normal and geometrically irreducible one has that $X(\O_S) \setminus \bigcup \pi_i(Y_i(K))$ is Zariski dense (see \cite[Remark $1.7$]{coccia_ihp}). In this case, whenever a rational point lifts, it actually lifts to an integral point:
\end{remark}
\begin{proposition}\label{lift_integrale}
    Let $\pi\colon Y\to X$ be a finite map between two quasi-projective varieties, all defined over $K$. Then, for any given $S$-integral models of $X$ and $Y$, there exists a finite enlargement $S'$ of $S$ such that, for any $x\in X(\O_S)$, if there exists $y\in Y(K)$ such that $\pi(y)=x$, then $y\in Y(\O_{S'})$.
\end{proposition}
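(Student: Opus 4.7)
The plan is to combine a spreading-out argument with the valuative criterion of properness, exploiting that $\pi$ is finite (hence proper). First I would choose auxiliary integral models $\mathcal{X}' \to \Spec \O_{S_0}$ and $\mathcal{Y}' \to \Spec \O_{S_0}$, for some finite $S_0 \supseteq S$, together with a finite morphism $\tilde{\pi}\colon \mathcal{Y}' \to \mathcal{X}'$ restricting to $\pi$ on generic fibres. This is standard: cover $X$ by finitely many affine opens $U_i = \Spec A_i$ so that $\pi^{-1}(U_i) = \Spec B_i$ with $B_i$ a finite $A_i$-algebra; pick finitely many $A_i$-module generators of $B_i$ and their multiplication and integrality relations, then clear denominators. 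After inverting a finite set of primes, one obtains $\O_{S_0}$-models of the $A_i$ and compatible finite $\O_{S_0}$-algebras extending $B_i$, which glue to the sought finite $\tilde{\pi}$.

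Next I would compare $(\mathcal{X}', \mathcal{Y}')$ with the originally given integral models $(\mathcal{X}, \mathcal{Y})$. Since both models of $X$ share the same generic fibre, and likewise for $Y$, standard spreading out of isomorphisms shows that after further enlarging $S_0$ to some finite $S_1 \supseteq S_0$, the identity on generic fibres extends to isomorphisms $\mathcal{X}\times_{\Spec \O_S}\Spec\O_{S_1} \cong \mathcal{X}'\times_{\Spec\O_{S_0}}\Spec\O_{S_1}$ and similarly for $\mathcal{Y}$. Via these identifications, $\tilde{\pi}$ becomes a finite morphism from (the restriction of) $\mathcal{Y}$ to (the restriction of) $\mathcal{X}$ over $\O_{S_1}$ extending $\pi$.

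I then set $S' := S_1$. Given $x \in X(\O_S) \subseteq X(\O_{S'})$ and $y \in Y(K)$ with $\pi(y) = x$, for each finite place $v \notin S'$ the point $x$ extends to an $\mathcal{O}_v$-point of $\mathcal{X}$. Since $\tilde{\pi}$ is finite and therefore proper over $\O_{S'}$, the valuative criterion applied to the discrete valuation ring $\mathcal{O}_v$ yields a unique extension of $y \in Y(K)$ to an $\mathcal{O}_v$-point of $\mathcal{Y}$ lying above the $\mathcal{O}_v$-point determined by $x$. Since this holds for every $v \notin S'$, we conclude $y \in Y(\O_{S'})$.

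The main technical point is the initial spreading-out step producing the finite $\tilde{\pi}$; the valuative criterion is then a direct application. The only subtlety is that the statement allows \emph{arbitrary} pre-chosen integral models of $X$ and $Y$, so one must separately spread out the isomorphisms between the constructed models and the given ones — which is why the final $S'$ may properly contain both $S$ and the $S_0$ over which $\tilde{\pi}$ was first constructed.
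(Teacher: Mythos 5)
Your proposal is correct and follows essentially the same route as the paper: spread out $\pi$ to a finite morphism of integral models over $\O_{S'}$ for some finite enlargement $S'$ of $S$, then use that finite morphisms are proper and apply the valuative criterion at each place outside $S'$ to extend $y$ (uniquely, which allows gluing into a section) over $\Spec \O_{S'}$. The only cosmetic difference is that you construct auxiliary models and then spread out the comparison isomorphisms with the given ones, whereas the paper simply enlarges $S$ so that $\pi$ extends to a finite morphism between the base changes of the given models — the same content phrased more tersely.
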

\begin{proof}
    Let $\mathcal{X}$ and $\mathcal{Y}$ be the given $\O_S$-models of $X$ and $Y$, respectively. Up to taking a finite enlargement $S'$ of $S$ and denoting $\mathcal{X}'=\mathcal{X}\times \Spec \O_{S'}$ and $\mathcal{Y}'=\mathcal{Y}\times \Spec \O_{S'}$, we may assume that there exists a finite $\O_{S'}$-morphism $\Pi' \colon \mathcal{Y}' \to \mathcal{X}'$ such that the induced map on the generic fibers is $\pi$.
    We have the following commutative diagram
    \begin{equation*}
        \begin{tikzcd}
        \mathcal{Y}' \arrow{r}{\Pi'} \arrow{dr}  & \mathcal{X}' \arrow{d} \\
        \Spec K \arrow{r}\arrow{u}{y}  & \Spec \O_{S'} \arrow[bend right]{u}[swap]{x}
        \end{tikzcd}
    \end{equation*}
    Our goal is to extend $y$ to a section $\overline{y} \colon \Spec \O_{S'} \to \mathcal{Y}'$ making the diagram commute. For any point $\mathfrak{p} \in \Spec \O_{S'}$, we have the following commutative diagram
    \begin{equation*}
        \begin{tikzcd}
        \mathcal{Y}' \arrow{r}{\Pi'} \arrow{dr}  & \mathcal{X}' \arrow{d} \\
        \Spec K \arrow{d}\arrow{r}\arrow{u}{y}  & \Spec \O_{S'} \arrow[bend right]{u}[swap]{x} \\
        \Spec \O_{K,\mathfrak{p}} \arrow{ur} 
        \end{tikzcd}
    \end{equation*}
    where $\O_{K,\mathfrak{p}}$ is the localization of $\O_K$ at $\mathfrak{p}$.
    Since $\mathcal{Y}'\to \mathcal{X}'$ is finite (and in particular proper), we can apply the valuative criterion of properness to find a \emph{unique} lift $y_{\mathfrak{p}}\colon \Spec(\O_{K,\mathfrak{p}}) \to \mathcal{Y}'$ making the diagram commute. Patching together these lifts (which is possible due to uniqueness) we find the sought $\O_{S'}$-section $\overline{y} \colon \Spec \O_{S'} \to \mathcal{Y}'$.
\end{proof}

If a finite map $Y\to X$ is also unramified, then \emph{all} $S$-integral points on $X$ lift to finitely many twists of $Y\to X$. This is the content of Chevalley-Weil's theorem:
\begin{theorem}[Chevalley-Weil]\label{chevalleyweil}
    Let $\pi \colon Y\to X$ be a finite unramified morphism of quasi-projective varieties over a number field $K$ and let $S$ be a finite set of places in $K$ containing the archimedean ones. Then there exists a finite set of places $S'$ containing $S$ and finitely many $K$-varieties $Y_1,\dots, Y_n$ endowed with $K$-morphisms $\pi_i \colon Y_i \to X$ such that
	\begin{itemize}
	    \item $X(\O_S) \subseteq \bigcup_i \pi_i(Y_i(\O_{S'}))$;
	    \item there exists isomorphisms $\psi_i \colon Y_i \to Y$ defined over $\overline{K}$ such that $\pi \circ \psi_i=\pi_i$.
	\end{itemize}
\end{theorem}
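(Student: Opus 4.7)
The plan is to combine spreading out, Hermite's finiteness theorem, and Galois descent of twists. After a first finite enlargement $S_0$ of $S$, the étale morphism $\pi$ extends to a finite étale morphism $\Pi \colon \mathcal{Y} \to \mathcal{X}$ of integral $\O_{S_0}$-models. Given any $x \in X(\O_{S_0})$, regarded as a section $x \colon \Spec \O_{S_0} \to \mathcal{X}$, the pullback $x^*\mathcal{Y} = \Spec B_x$ is a finite étale $\O_{S_0}$-algebra of rank $d := \deg \pi$. Its generic fiber $B_x \otimes_{\O_{S_0}} K$ is a finite étale $K$-algebra of rank $d$, hence a product $L_{x,1} \times \cdots \times L_{x,r}$ of number fields, each of degree at most $d$ over $K$ and each unramified outside $S_0$.

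Next, I would apply Hermite's theorem: only finitely many isomorphism classes of number fields of degree at most $d$ unramified outside $S_0$ can occur, so only finitely many possibilities appear for the factors $L_{x,j}$ as $x$ ranges over $X(\O_{S_0})$. Let $M/K$ be a finite Galois extension, unramified outside $S_0$, containing the Galois closure of the compositum of all such fields, and further enlarge $S_0$ to include every place of $K$ below a ramified place of $M$. Then, for every $x \in X(\O_{S_0})$, all geometric points of the fiber $\pi^{-1}(x)$ are $M$-rational, and the $\Gal(\overline{K}/K)$-action on $\pi^{-1}(x)(\overline{K})$ factors through the finite quotient $\Gal(M/K)$.

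It then remains to construct the twists. The $K$-forms of $\pi \colon Y \to X$ which become isomorphic to $\pi$ after base change to $M$ are classified by the Galois-cohomology set $H^1(\Gal(M/K), \operatorname{Aut}_{X_M}(Y_M))$, which is finite since both the group and the coefficient group are finite. Let $\pi_i \colon Y_i \to X$, for $i=1,\dots,n$, be representatives of the corresponding twists, each equipped via Galois descent with a $\overline{K}$-isomorphism $\psi_i \colon Y_i \to Y$ satisfying $\pi \circ \psi_i = \pi_i$. For each $x \in X(\O_{S_0})$, the $\Gal(M/K)$-set $\pi^{-1}(x)(M)$ is isomorphic to the $\Gal(M/K)$-set determining one of these twists, and any geometric point in the fiber descends to a $K$-point of the corresponding $Y_i$. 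Finally, applying \Cref{lift_integrale} to the finitely many maps $\pi_i$ gives a common further enlargement $S'$ of $S_0$ such that these lifts lie in $Y_i(\O_{S'})$, yielding the desired containment $X(\O_S) \subseteq \bigcup_i \pi_i(Y_i(\O_{S'}))$.

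The most delicate step is the construction and bookkeeping of the twists when $\pi$ is not Galois: the cleanest route is to pass to the Galois closure $\tilde\pi \colon \tilde Y \to X$ with group $G$, write $Y = \tilde Y/H$ for a subgroup $H \le G$, and identify $\operatorname{Aut}_{X_M}(Y_M) = N_G(H)/H$. One then has to verify that the cocycle attached to each $x \in X(\O_{S_0})$ really corresponds to a $K$-point of the appropriate twist of $Y$ (and not only of its Galois closure), which amounts to tracking the stabilizer $H$ carefully throughout the descent.
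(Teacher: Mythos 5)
The paper offers no argument for this statement at all---it simply cites Theorem 3.4 of Corvaja--Zannier---so what has to be assessed is your proof on its own. Its first half is the standard and correct reduction: spreading out $\pi$ to a finite \'etale morphism of $\O_{S_0}$-models, observing that the pullback along an integral section is a finite \'etale $\O_{S_0}$-algebra of rank $\deg\pi$, invoking Hermite--Minkowski to produce a single finite Galois extension $M/K$ over which all points of all fibers $\pi^{-1}(x)$, $x\in X(\O_{S_0})$, become rational, and the closing appeal to \Cref{lift_integrale} to convert rational lifts into $\O_{S'}$-integral ones is also fine.

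The gap is in the twisting step, and it is not the ``bookkeeping'' you flag at the end but a failure of the key claim itself. You assert that for each $x\in X(\O_{S_0})$ the $\Gal(M/K)$-set $\pi^{-1}(x)(M)$ ``is isomorphic to the Galois set determining one of these twists,'' so that $x$ lifts to a $K$-point of some form of $Y$ classified by $H^1(\Gal(M/K),\operatorname{Aut}_{X_M}(Y_M))$. This is precisely what breaks down when $\pi$ is not Galois: $\operatorname{Aut}_{X_M}(Y_M)$ need not act transitively on geometric fibers, so the fiber over $x$ need not be obtained from the split fiber by an $\operatorname{Aut}$-valued cocycle, and the cohomology set can be far too small to account for all fibers. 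Concretely, take a degree~$3$ \'etale cover $Y\to X$ with geometric monodromy $S_3$ (for instance restrict $y\mapsto y^3-y$ to the \'etale locus over $\bP^1$): there $\operatorname{Aut}_{X_{\overline K}}(Y_{\overline K})$ is trivial, the cohomology set is a single point, and your proposed list of twists reduces to $\{Y\}$, while the fiber over a typical point of $X(\O_S)$ (for a suitable $S$) is an irreducible cubic \'etale algebra with no $K$-point. So no amount of tracking the stabilizer $H$ inside $N_G(H)/H$ will close the argument in the form you set it up: what the passage to the Galois closure $\tilde\pi\colon\tilde Y\to X$ honestly yields is that every $x\in X(\O_S)$ lifts to a $K$-rational (hence, by \Cref{lift_integrale}, $S'$-integral) point of one of finitely many twists of $\tilde Y$, equivalently that all points of $\pi^{-1}(x)$ are $M$-rational---which is the classical Chevalley--Weil conclusion and is all the paper actually needs (for \Cref{corollario_cw} one only requires finitely many covers of degree $>1$, and twisted Galois closures qualify). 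If one insists on twists of $Y$ itself, as in the statement as printed, the $S_3$ example shows that the family of $\overline K$-forms of $Y$ over $X$ is in general too small for the stated inclusion, so the conclusion must be interpreted through the Galois closure rather than literally through forms of $Y$; your proof should be reorganized accordingly rather than patched at the descent step.
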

\begin{proof}
    See Theorem $3.4$ of \cite{CZnuovo}.
\end{proof}
The following consequence of Chevalley-Weil was first observed by Corvaja and Zannier in \cite{CZ_hilbert}:
\begin{cor}\label{corollario_cw}
    Let $X$ be a quasi-projective variety and let $Y \to X$ be a non-trivial étale cover of $X$. Then $X(\O_S)$ is a thin set (for any integral model of $X$). In particular, if $X$ is not simply connected, then it does not have the IHP.
\end{cor}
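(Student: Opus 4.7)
The plan is to derive both assertions directly from the Chevalley--Weil theorem (\Cref{chevalleyweil}). First I apply \Cref{chevalleyweil} to the given non-trivial étale cover $\pi\colon Y\to X$: possibly after enlarging $S$ to some finite $S'\supseteq S$, it furnishes finitely many finite $K$-morphisms $\pi_i\colon Y_i\to X$ satisfying
\begin{equation*}
    X(\O_S)\;\subseteq\;\bigcup_i \pi_i\bigl(Y_i(\O_{S'})\bigr)\;\subseteq\;\bigcup_i \pi_i\bigl(Y_i(K)\bigr),
\end{equation*}
where each $Y_i$ carries a $\overline{K}$-isomorphism $\psi_i\colon Y_i\to Y$ with $\pi\circ\psi_i=\pi_i$.

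Next I would verify that each $\pi_i$ is a $K$-cover of degree $\ge 2$ in the sense of \Cref{def_cover}. The degree is preserved by $\psi_i$ (being a $\overline K$-isomorphism over $X$), so $\deg\pi_i=\deg\pi>1$ by non-triviality of $\pi$. Since $Y$ is (possibly after passing to a connected component defined over a finite extension) geometrically integral, every twist $Y_i$ is geometrically integral and in particular $K$-integral. The displayed inclusion then exhibits $X(\O_S)$ as contained in a finite union of images of $K$-covers of degree $>1$, which by \Cref{def_thin} (with $Z=\emptyset$) is exactly the definition of a thin subset of $X(K)$.

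For the ``in particular'' part, a non-simply-connected $X$ admits a non-trivial connected étale cover defined over some finite extension of the base field; applying the first part over this enlargement yields thinness of the integral points, and hence failure of the IHP for every integral model of $X$. The main (and essentially only) technical obstacle is the twist bookkeeping in the middle paragraph---one must check that the $Y_i$ provided by Chevalley--Weil are genuine $K$-integral varieties of degree $>1$ over $X$, rather than mere $\overline K$-objects---but this follows routinely from the compatibilities $\pi\circ\psi_i=\pi_i$ together with the geometric integrality of $Y$.
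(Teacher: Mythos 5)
Your main argument is exactly the intended one: the paper gives no separate proof of \Cref{corollario_cw}, presenting it as an immediate consequence of \Cref{chevalleyweil}, and your first two paragraphs carry that out correctly — the $S$-integral points land in the images of finitely many twists $\pi_i\colon Y_i\to X$, each finite of degree $\deg\pi>1$, so $X(\O_S)$ is thin in the sense of \Cref{def_thin}. When $Y$ is geometrically integral this is airtight, since each twist is then geometrically integral of the same degree.

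Two steps do need tightening, both tied to passing to a finite extension. In the ``in particular'' paragraph you realize the cover only over a finite extension $K'$ and apply the first part there; as written this shows $X(\O_S)$ is thin as a subset of $X(K')$, whereas failure of the IHP requires thinness inside $X(K)$, so you are tacitly using the (true but unstated) fact that a subset of $X(K)$ which is thin over a finite extension is thin over $K$ (Serre). You can avoid this entirely: if the geometric cover $Y'\to X_{\overline K}$ descends to $L/K$ finite, the composite $Y'\to X_L\to X$ is a finite étale $K$-cover all of whose geometric components have degree $\ge 2$ over $X_{\overline K}$, and the first part then applies over $K$ itself. Relatedly, your twist bookkeeping should note what actually matters: no twist may acquire a $K$-irreducible component of degree $1$ over $X$, which holds precisely when $Y_{\overline K}\to X_{\overline K}$ has no section (e.g.\ $Y$ geometrically connected of degree $\ge 2$, or the composite cover just described). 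For a geometrically split cover such as $X_L\to X$ the Chevalley--Weil argument yields nothing, so ``non-trivial'' must be read geometrically — which is automatic in the topological application. With these clarifications your proof is complete and coincides with the paper's intended argument.
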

In particular, the IHP is \emph{strictly} stronger than Zariski density of integral points, as shown by non-simply connected varieties with a potentially dense set of integral points (e.g. elliptic curves and $\bG_m$).

\section{Potential density for del Pezzo surfaces}
We will collect some definitions and facts on the geometry of smooth del Pezzo surfaces.
\begin{definition}
    A \emph{del Pezzo surface} is a smooth projective surface with ample anticanonical class. The degree of a del Pezzo surface is the self-intersection of its canonical class.
\end{definition}
\begin{definition}
    Let $n\le 8$ be a positive integer and let $P_1,...,P_n$ be distinct points in $\bP^2$. We say that these points are in \emph{general position} if no three of them are collinear, no six of them lie on a conic, and no eight of them lie on a cubic having a node at one of them. 
\end{definition}
It is easy to see that $\bP^2$, $\bP^1\times \bP^1$ and blow-ups of $\bP^2$ in $n \le 8$ points in general position are del Pezzo surfaces of degree respectively $9$, $8$ and $9-n$. Del Pezzo proved that, over an algebraically closed field, these are all possible isomorphism classes:
\begin{theorem}\label{classification_delpezzo}
    All del Pezzo surfaces have degree $d$ at most $9$. Over an algebraically closed field, every del Pezzo surface is isomorphic either to $\bP^1\times \bP^1$ or to the blow-up of $\bP^2$ in $9-d$ points in general position.
\end{theorem}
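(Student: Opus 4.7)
The plan is to prove this in three stages: first establish that every del Pezzo surface is rational, then reduce to a minimal rational surface via successive contractions of $(-1)$-curves, and finally verify that the blown-up configuration must be in general position and count how many points there can be.

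For rationality, since $-K_X$ is ample, Kodaira vanishing gives $H^i(X,\mathcal{O}_X)=0$ for all $i>0$, so the irregularity $q(X)$ and the geometric genus $p_g(X)$ both vanish. Moreover, $-K_X$ ample implies that $mK_X$ is anti-ample for every $m>0$, hence $h^0(X,mK_X)=0$; in particular the bigenus $P_2$ is zero. Castelnuovo's rationality criterion then yields that $X$ is rational.

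Since $X$ is smooth and rational, I would contract $(-1)$-curves successively to obtain a morphism $X\to Y$ with $Y$ a minimal smooth rational surface. By the classification of such surfaces, $Y$ is either $\bP^2$ or a Hirzebruch surface $\mathbb{F}_n$ with $n=0$ or $n\ge 2$. For $n\ge 2$ the negative section $E$ of $\mathbb{F}_n$ satisfies $(-K_Y)\cdot E=2-n\le 0$, and its strict transform in $X$ would contradict the ampleness of $-K_X$. Hence $Y\in\{\bP^2,\bP^1\times\bP^1\}$. Each blow-up decreases $K^2$ by $1$, so if $X\to Y$ is a composition of $k$ blow-ups then $d=K_X^2=K_Y^2-k$. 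When $Y=\bP^2$ this gives $d=9-k$ with $0\le k\le 8$; when $Y=\bP^1\times\bP^1$ and $k=0$ we obtain the second family. If instead $Y=\bP^1\times\bP^1$ and $k\ge 1$, an elementary transformation centered at the first blown-up point exhibits $X$ as a blow-up of $\mathbb{F}_1=\mathrm{Bl}_p(\bP^2)$ at $k-1$ further points, hence as a blow-up of $\bP^2$ at $k+1=9-d$ points. In every case $d\le 9$ and $X$ is either $\bP^1\times\bP^1$ or a blow-up of $\bP^2$ at $9-d$ points (a priori possibly infinitely near).

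It remains to show that these $9-d$ points must be distinct and in general position, and conversely that any such configuration yields a del Pezzo surface. For the forward direction, each failure of general position produces a curve on $X$ that violates ampleness of $-K_X$: writing $-K_X=3H-\sum E_i$, if three of the $P_i$ are collinear then the strict transform $\widetilde{L}$ of their line satisfies $(-K_X)\cdot\widetilde{L}=3-3=0$; similarly, six points on a conic produce a $\widetilde{C}$ with $(-K_X)\cdot\widetilde{C}=0$, eight on a nodal cubic produce a $\widetilde{E}$ with $(-K_X)\cdot\widetilde{E}\le 0$, and infinitely near points yield $(-1)$-curves in the exceptional locus with intersection $0$ against $-K_X$. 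Conversely, for points in general position, I would verify the Nakai--Moishezon criterion: $(-K_X)^2=9-(9-d)=d>0$, and for any irreducible curve $C\subset X$ a case analysis in $\Pic(X)=\bZ H\oplus\bigoplus \bZ E_i$ using the constraints imposed by general position shows $(-K_X)\cdot C>0$. The hardest step here is this converse implication: the cleanest way is to classify the $(-1)$-curves on such a blow-up (there are finitely many, indexed by the root system of type $E_{9-d}$) and check positivity class by class, or equivalently use the Weyl-group action on $\Pic(X)$ to reduce to a small number of representatives.
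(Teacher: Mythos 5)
The paper offers no proof of this statement at all: it is quoted as Del Pezzo's classical classification (stated right after the remark that blow-ups of $\bP^2$ in at most $8$ general points are del Pezzo surfaces), so there is nothing internal to compare against. Your outline is essentially the standard textbook proof and, for the statement as given, it is sound: Castelnuovo rationality via $q=P_2=0$ (Kodaira vanishing is legitimate here since the paper works in characteristic zero; in positive characteristic $h^1(\mathcal{O}_X)=0$ needs a separate argument), reduction to a minimal rational surface, exclusion of $\mathbb{F}_n$ for $n\ge 2$ by pairing $-K_X$ with the strict transform of the negative section (the projection-formula estimate you implicitly use is correct), the degree count $d=K_Y^2-k>0$ which also yields $d\le 9$ and $k\le 8$, absorption of blow-ups of $\bP^1\times\bP^1$ into blow-ups of $\bP^2$ via an elementary transformation, and exclusion of infinitely near or non-general-position points because each produces an irreducible curve $C$ with $(-K_X)\cdot C\le 0$. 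Two small corrections: the strict transform of an exceptional curve through an infinitely near point has class $E_1-E_2$ and is a $(-2)$-curve, not a $(-1)$-curve, though the computation $(-K_X)\cdot(E_1-E_2)=0$ is what matters; and your final Nakai--Moishezon step (general position implies $-K_X$ ample) is only sketched, but it is not actually required by the theorem, which asserts only that every del Pezzo surface arises this way — the converse is the complementary claim the paper states separately before the theorem. So the proposal proves the statement, with the heaviest machinery (the Weyl-group/$(-1)$-curve analysis) reserved for a direction that is not needed.
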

We will frequently employ the following characterization of the Picard group of blowups:
\begin{lemma}\label{picard}
    Let $X \to \bP^2$ be the blow-up of $\bP^2$ in $n$ points in general position. Let $E_1, \dots, E_n$ be the exceptional divisors above the blown-up points. Then $\Pic(X)\cong \bZ h \oplus \bZ e_1 \oplus \cdots \oplus \bZ e_n$, where $h$ is the class of a line in $\bP^2$, $e_i$ is the class of the exceptional divisor $E_i$ and the incidence relations are as follows $h^2=1, e_i^2=-1, e_i.h=0$ and $e_i.e_j=0$ for $i\neq j$. The class of the canonical divisor is $-3h+\sum_{i=1}^{n} e_i$.
\end{lemma}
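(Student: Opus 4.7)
The plan is to proceed by induction on $n$, using at each step the standard description of the Picard group and intersection theory of a single blow-up at a smooth point. Concretely, if $\sigma \colon \widetilde{Y} \to Y$ denotes the blow-up of a smooth surface $Y$ at a point $P$ with exceptional divisor $E$, the classical formula gives $\Pic(\widetilde{Y}) = \sigma^*\Pic(Y) \oplus \bZ[E]$, with intersection relations $\sigma^*\alpha \cdot \sigma^*\beta = \alpha \cdot \beta$, $\sigma^*\alpha \cdot E = 0$ and $E^2 = -1$, and canonical class $K_{\widetilde{Y}} = \sigma^*K_Y + E$. These facts are available in any standard reference on algebraic surfaces.

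First I would factor $X \to \bP^2$ as a composition of $n$ successive blow-ups at distinct points, using that the points $P_1,\dots, P_n$ are distinct (general position implies in particular they are distinct, so one may blow them up one at a time on $\bP^2$). Applying the blow-up formula iteratively, I obtain
\[
\Pic(X) = \bZ h \oplus \bZ e_1 \oplus \cdots \oplus \bZ e_n,
\]
where $h$ is the pullback of the class of a line in $\bP^2$ and $e_i$ is the class of the total transform of the exceptional divisor above $P_i$; since the $P_i$ are distinct, the $E_i$ do not meet each other on $X$, so $e_i$ coincides with the class of the strict transform $E_i$ in the statement.

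Next I would compute the intersection pairing: $h^2 = 1$ follows from the pullback formula together with the fact that two general lines in $\bP^2$ meet in a single point; $e_i^2 = -1$ is the standard self-intersection of an exceptional divisor; $h \cdot e_i = 0$ because $e_i$ is contracted by $\sigma$ and the projection formula gives $h \cdot e_i = \sigma^*[\text{line}] \cdot e_i = [\text{line}] \cdot \sigma_* e_i = 0$; finally $e_i \cdot e_j = 0$ for $i\neq j$ because the corresponding exceptional divisors lie over distinct base points and therefore are disjoint on $X$.

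For the canonical class, I would iterate the relation $K_{\widetilde{Y}} = \sigma^* K_Y + E$ through the $n$ blow-ups, starting from $K_{\bP^2} = -3h$ (where here $h$ denotes a hyperplane on $\bP^2$). At each stage the pullback of the previous canonical class plus the new exceptional class contributes an additional $+e_i$, yielding $K_X = -3h + \sum_{i=1}^n e_i$, as claimed. There is no substantive obstacle here: the general position hypothesis is used only to ensure that we are blowing up distinct smooth points (so that the $E_i$ are pairwise disjoint smooth rational curves), and all other ingredients are formal consequences of the blow-up formulas.
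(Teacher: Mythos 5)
Your proof is correct and follows essentially the same route as the paper, which simply cites the standard single-blow-up formulas for the Picard group, intersection pairing and canonical class (Hartshorne, Ch.~V, Prop.~3.2 and 3.3) and iterates them over the $n$ distinct blown-up points. Your explicit induction and the remark that distinctness of the points makes total and strict transforms of the exceptional curves agree is exactly the content implicit in that citation.
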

\begin{proof}
    This follows immediately from \cite[Ch. V, Prop. $3.2$ and Prop. $3.3$]{Hart}.
\end{proof}
We now characterize the reduced effective anticanonical divisors in smooth del Pezzo surfaces. The reduced effective anticanonical divisors in $\bP^2$ are simply the cubic curves, that is, the zero loci of homogeneous cubic forms without repeated factors. In $\bP^1\times \bP^1$ they are the zero loci of biquadratic polynomials without repeated factors. Let us look at the blowups $\sigma\colon X\to \bP^2$ along finitely many points in general position.
For $D$ an effective reduced divisor in $\bP^2$, we let $\widehat{D}\coloneqq \widetilde{D}+\sum_{P \in D} (E_P.D-1)E_P$, where $\widetilde{D}$ is the strict transform of $D$ under $\sigma$ and the sum runs along the blown-up points lying on $D$. Notice that whenever $D$ is smooth at the blown-up points, then $\widehat{D}$ equals the strict transform. We have the following characterization:
\begin{proposition}\label{caratterizzazione_divisori}
    Let $\sigma \colon \DDiv(X) \to \DDiv(\bP^2)$ be the map defined by $A\mapsto \sigma(A)$. Let \, $\widehat{} \, \colon \DDiv(\bP^2) \to \DDiv(X)$ be the map defined by $D\mapsto \widehat{D}$. Then $\sigma$ and $\, \widehat \,$ induce a bijection between reduced effective anticanonical divisors of $X$ and cubic curves in $\bP^2$ passing through all the blown-up points and with at most double singularities in the blown-up points.
\end{proposition}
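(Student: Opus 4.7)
The plan is to compute in the Picard group of $X$ using Lemma \ref{picard} and to analyze how a reduced effective divisor on $X$ decomposes into its non-exceptional strict transform and its exceptional part.

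First I would fix a reduced effective anticanonical divisor $A$ on $X$ and write $A = \widetilde{D} + \sum_{i=1}^{n} m_i E_i$ where $\widetilde{D}$ contains no exceptional component, $D \coloneqq \sigma_* A = \sigma(\widetilde{D})$, and $m_i \in \bZ_{\ge 0}$. If $\mu_i$ denotes the multiplicity of $D$ at $P_i$, then by standard properties of blowups $\sigma^* D = \widetilde{D} + \sum \mu_i E_i$, so the class of $\widetilde{D}$ in $\Pic(X)$ is $(\deg D)h - \sum \mu_i e_i$. Equating $[A] = -K_X = 3h - \sum e_i$ using Lemma \ref{picard} forces $\deg D = 3$ (so $D$ is a cubic) and $m_i = \mu_i - 1$ for every $i$.

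Next I would read off the three constraints on $D$. Effectiveness of $A$ gives $m_i \ge 0$, hence $\mu_i \ge 1$, i.e.\ $D$ passes through all blown-up points. Reducedness of $A$ forces the coefficient $\mu_i - 1$ of each $E_i$ to be at most $1$, hence $\mu_i \le 2$, i.e.\ $D$ has at most a double point at each $P_i$; and reducedness of $\widetilde{D}$ (hence of $D$, since $\sigma$ is birational and $\widetilde{D}$ is the strict transform) gives that $D$ is reduced as a cubic. Conversely, starting from a reduced cubic $D \subset \bP^2$ through the $P_i$ with $\mu_i \le 2$, the formula $\widehat{D} = \widetilde{D} + \sum (\mu_i - 1) E_i$ is manifestly effective, its class is $3h - \sum \mu_i e_i + \sum (\mu_i - 1) e_i = -K_X$ by Lemma \ref{picard}, and it is reduced because each exceptional coefficient lies in $\{0,1\}$ and $\widetilde{D}$ is reduced.

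Finally I would verify that the two constructions are mutually inverse. One direction is immediate: $\sigma_*(\widehat{D}) = \sigma_*(\widetilde{D}) = D$ since $\sigma_* E_i = 0$. For the other direction, the computation above shows that any reduced effective anticanonical $A$ has exceptional multiplicity exactly $\mu_i - 1 = E_i \cdot \widetilde{D} - 1$ above each $P_i \in D$ and $0$ elsewhere, which is precisely the definition of $\widehat{D}$ for $D = \sigma_*(A)$; hence $A = \widehat{D}$.

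I do not expect a genuine obstacle here: the argument is a linear calculation in $\Pic(X)$ combined with bookkeeping of strict transforms. The only point that deserves a careful sentence is why reducedness of $A$ is equivalent to the two conditions ``$D$ reduced'' and ``$\mu_i \le 2$'' simultaneously — i.e.\ that a double point of $D$ at $P_i$ (node, cusp, or tacnode-in-the-plane) still gives a reduced strict transform $\widetilde{D}$ on $X$, so no hidden non-reducedness creeps in along the exceptional locus.
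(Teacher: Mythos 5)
Your proposal is correct and follows essentially the same route as the paper: the paper's proof is exactly the combination of Lemma \ref{picard} with the fact that the exceptional intersection number equals the multiplicity $\mu_i$ of $D$ at the blown-up point, and your class computation $[A]=3h-\sum e_i$ forcing $\deg D=3$ and exceptional coefficients $\mu_i-1$ is the detailed version of that one-line argument. The bookkeeping of effectiveness ($\mu_i\ge 1$), reducedness ($\mu_i\le 2$ and $\widetilde D$ reduced), and the mutual-inverse check are all sound.
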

\begin{proof}
    The claim follows immediately from \Cref{picard} and the fact that, for a plane cubic curve $D$ and a blown-up point $P \in D$, the intersection product $E_P.D$ equals the multiplicity of $D$ at $P$.
\end{proof}

We can now prove \Cref{soluzione_hassett}:
\begin{proof}[Proof of \Cref{soluzione_hassett}]
    Let us first observe that it is necessary to exclude the case of $\bP^2$ minus three concurrent lines: indeed this is isomorphic to $\bA^1\times (\bP^1\setminus \{0,1,\infty \})$ and so the integral points are never dense because of Siegel's theorem applied to $\bP^1\setminus \{0,1,\infty \}$.
    We proceed to consider the various isomorphism classes of del Pezzo surfaces as listed in \Cref{classification_delpezzo}. If $X=\bP^2$, potential density has been proved in \cite{silverman} for $D$ singular normal crossings and in \cite{beukers} for $D$ smooth. If $D \subset \bP^2$ is not normal crossings, potential density follows from \cite[Theorem $2$]{levin_yasufuku}: we still provide a short proof, since the same argument appears in \Cref{finitely_generated}. If $D \subset \bP^2$ is the union of a smooth conic and a tangent line, looking at the linear system of lines through the tangency point, one sees that $\mathbb{P}^2\setminus D \cong \bA^1 \times \bG_m$, so that the integral points are potentially dense. If $D\subset \bP^2$ is a cuspidal cubic, let $L$ be the principal tangent at the node: then, again looking at the lines through the node, one finds that $\bP^2 \setminus (D \cup L) \cong \bA^1 \times \bG_m$, so that one can find a Zariski dense set of integral points after extending $K$ and $S$. In particular, these points are $S$-integral with respect to $D$, finishing the proof of the claim for $X=\bP^2$.
    
    Assume that $X$ is a blow-up $\sigma\colon X\to \bP^2$ along finitely many points in general position of $\bP^2$. Following \Cref{caratterizzazione_divisori}, we denote by $\widehat{D}$ the given reduced effective anticanonical divisor of $X$ and by $D$ its $\sigma$-image in $\bP^2$. Let us first assume that $D$ is not the union of three concurrent lines. Since $D$ is a reduced effective anticanonical divisor of $\bP^2$, integral points are potentially dense on $U\coloneqq \bP^2 \setminus D$. Let $D'$ be the divisor obtained adding to $\widehat{D}$ all the exceptional divisors which are not already contained in $\widehat{D}$. Then the pair $(X,D')$ is a projective compactification of $U$: by \Cref{density_model_dependence}, since integral points are potentially dense for $U$, they are also for $(X,D')$. Since $\widehat{D}\subset D'$ we have that the $D'$-integral points are also integral with respect to $\widehat{D}$, proving the claim.
    
    Assume now that $D$ is the union of three concurrent lines $L_1,L_2,L_3$ all meeting in a point $Q$. Due to \Cref{caratterizzazione_divisori}, we have that $Q$ is not a blow-up point of $\sigma$. Let $P$ be one of the blown-up points; we may assume $P \in L_1$ without loss of generality. Consider the divisor $D'$ obtained adding to $\widehat{D}$ all the exceptional divisors except the one above $P$. Again by \Cref{density_model_dependence} and the fact that $\widehat{D}\subseteq D'$, it is sufficient to prove potential density for $U\coloneqq X \setminus D'$. An alternative compactification for $U$ is given by the blowup $\nu \colon Y \to \bP^2$ in $P$, with the strict transform $\widetilde{D}$ of $L_1+L_2+L_3$ as the divisor at infinity. Let $E$ be the exceptional divisor of $Y$ lying above $P$. Then the linear system of lines through $P$ gives $Y$ the structure of a ruled surface $Y \to E$ over $E \cong \bP^1$. Since $P \in L_1$ we have that the strict transform $\widetilde{L}_1$ of $L_1$ is a fiber of the ruling. The complement $Y\setminus\widetilde{L}_1 $ is a ruled surface over $\bA^1$ such that $E,  \widetilde{L}_2, \widetilde{L}_3$ are three mutually non-intersecting sections. This gives an isomorphism between $X\setminus \widehat{D}$ and $\bA^1\times \bG_m$, whose integral points are potentially dense.
    
    It remains to deal with $X=\bP^1\times \bP^1$. We only need to address the case of $D$ singular anticanonical, since the smooth case has already been treated in \cite{Hats}.
    A reduced effective $(2,2)$ divisor in $\bP^1\times \bP^1$ is the zero locus of a biquadratic polynomial $F \in K[x_0,x_1,y_0,y_1]$ with no repeated factor. Up to automorphism (and enlarging $K$ and $S$), we may assume that $D$ has a singularity in $([x_0:x_1],[y_0:y_1])=([1:0],[1:0])$. This means that $F$ has no monomials of the type $x_0^2y_0^2$, $x_0^2y_0y_1$ and $x_0x_1y_0^2$. Let $L \coloneqq (\bP^1 \times \{[1:0]\})$ and $M\coloneqq (\{[1:0]\}\times \bP^1)$. The complement of $L \cup M$ in $\bP^1\times \bP^1$ can be identified with $\bA^2$ with coordinates $x\coloneqq x_0/x_1$ and $y \coloneqq y_0/y_1$. The restriction of $D$ in $\bA^2$ is defined by the zero locus of a quadratic polynomial $f \in K[x,y]$ obtained de-homogeneizing $F$. It is sufficient to prove potential density for $(\bP^1\times\bP^1)\setminus (D\cup L\cup M) \cong \bA^2 \setminus (f=0)$, which is the affine plane minus a (possibly reducible) conic. We compactify $\bA^2$ adding the line at infinity $H$ to get $\bA^2 \cup H=\bP^2$. In this compactification the affine conic $f=0$ will correspond to a projective conic $C$ in $\bP^2$. The curve $C+H$ has degree $3$ and so $\bP^2 \setminus (H+C)\cong \bA^2\setminus (f=0) \cong (\bP^1\times\bP^1)\setminus (D\cup L\cup M)$ admits potentially a dense set of integral points, provided $C+H$ is not given by three concurrent lines. This happens precisely when $f=0$ is given by two parallel lines in $\bA^2$, that is when $F$ has the following shape
    \begin{equation*}
	    F=(a x_1y_0+b x_0y_1 +c x_1y_1 )(a x_1y_0+b x_0y_1 +d x_1y_1),
    \end{equation*}
    where $a,b,c,d \in K$.
    If $a$ or $b$ vanished, then $F$ would contain a repeated factor and so $D$ would not be reduced. We can then assume $a, b \neq 0$, so that the family of polynomials $G_t=a x_1y_0+b x_0y_1 +t x_1y_1 $ defines a family of irreducible $(1,1)$ divisors. Observe that the curves $G_t=0$ intersect each other only in $([1:0],[1:0])$ with multiplicity $2$, and they intersect $F=0$ only in $([1:0],[1:0])$.
    We enlarge $K$ and $S$ so that the hyphoteses of \Cref{punti_infiniti_giusto} are met. The curve $Z\coloneqq \bP^1\times \{[0:1]\}$ intersects $F$ in two distinct points so, enlarging $S$ so that $(Z \setminus F)(\O_S)\neq \emptyset$ and applying \Cref{punti_infiniti_giusto}, we get that $(Z \setminus F)(\O_S)$ is infinite.
    Then the curves $G_t$ passing through the integral points of $Z$ contain infinitely many $S$-integral points, again by \Cref{punti_infiniti_giusto}. We have then found an infinite family of curves containing infinitely many $S$-integral points, thereby proving potential density.
\end{proof}
In the next section we will characterize the simply connected complements of anticanonical divisors in smooth del Pezzo surfaces: those are the ones which could potentially have the IHP, and indeed they do, as we prove in \Cref{teorema_principale}.

\section{Topology of complements of anticanonical divisors in Del Pezzo surfaces}
The aim of this section is to characterize the simply connected complements of reduced effective anticanonical divisors in smooth Del Pezzo surfaces. In the case of $\bP^2$ there is no such complement:
\begin{proposition}\label{p2notsimplyconnected}
    Let $D$ be a reduced effective anticanonical divisor of $\bP^2$. Then $\bP^2 \setminus D$ is not simply connected.
\end{proposition}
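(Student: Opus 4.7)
The plan is to exhibit an explicit non-trivial connected finite étale cover of $\mathbb{P}^2\setminus D$, which forces $\pi_1(\mathbb{P}^2\setminus D)\neq 1$. Since $-K_{\mathbb{P}^2}$ has degree $3$, the divisor $D$ is cut out by a reduced homogeneous cubic $F\in K[x,y,z]$. I would use the cyclic triple cover $\rho\colon \mathcal{S}\to \mathbb{P}^2$, where $\mathcal{S}\subset \mathbb{P}^3$ is the cubic surface defined by $w^3=F(x,y,z)$ and $\rho[x:y:z:w]=[x:y:z]$. Incidentally, this is exactly the surface $\mathcal{S}$ that appears in the introduction when discussing the irreducible cubic case, so the construction is already natural in the paper's framework.

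First, I would check that $\rho$ is étale over $\mathbb{P}^2\setminus D$: on any affine chart the equation $w^3-f(x,y)=0$ has $w$-derivative $3w^2$, which vanishes precisely on $\rho^{-1}(D)=\mathcal{S}\cap\{w=0\}$. This simultaneously shows that $\mathcal{S}$ is smooth away from $\rho^{-1}(D)$ and that the restriction $\rho\colon \mathcal{S}\setminus \rho^{-1}(D)\to \mathbb{P}^2\setminus D$ is a finite étale morphism of degree $3$.

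Next, I would verify connectedness by showing that $\mathcal{S}$ is geometrically irreducible, which amounts to the irreducibility of $w^3-F$ in $\overline{K}(x,y,z)[w]$. This can fail only if $F$ is a cube in $\overline{K}[x,y,z]$, i.e.\ if $F=cL^3$ for some linear form $L$; but such an $F$ is not reduced, contradicting the hypothesis that $D$ is reduced. Hence $\mathcal{S}$, and therefore the open subset $\mathcal{S}\setminus \rho^{-1}(D)$ obtained by removing a proper closed subset of an irreducible scheme, is irreducible.

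Combining the two facts, $\rho^{-1}(\mathbb{P}^2\setminus D)\to \mathbb{P}^2\setminus D$ is a connected finite étale cover of degree $3$, hence non-trivial, so $\mathbb{P}^2\setminus D$ is not simply connected. No real obstacle arises; the only point requiring a quick verification is the irreducibility of $w^3-F$, and that is immediate from the reducedness of $F$.
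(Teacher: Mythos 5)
Your proposal is correct and follows essentially the same route as the paper: both take the cyclic triple cover $\mathcal{S}\colon w^3=F(x,y,z)$ with the projection $[x:y:z:w]\mapsto[x:y:z]$, note it is irreducible (reducedness of $F$ rules out $F$ being a cube) and étale over $\bP^2\setminus D$, and conclude that this non-trivial connected cover obstructs simple connectedness. Your write-up merely spells out the étaleness and irreducibility checks that the paper leaves implicit.
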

\begin{proof}
    A reduced effective anticanonical divisor of $\bP^2$ corresponds to a homogeneous degree $3$ form $F(x,y,z)$ with no repeated factors. The surface $Y \colon w^3=F(x,y,z)$ in $\bP^3$ is irreducible, smooth outside of $w=0$ and is equipped with a degree $3$ map to $\bP^2$ defined by $[x:y:z:w]\mapsto [x:y:z]$. This map restricts to a non-trivial étale cover $Y \setminus (w=0) \to \bP^2 \setminus D$, thereby proving $\bP^2 \setminus D$ is not simply connected.
\end{proof}
The same is true for $\bP^1\times \bP^1$:
\begin{proposition}\label{product_not_simply_connected}
    Let $D$ be a reduced effective anticanonical divisor of $\bP^1 \times \bP^1$. Then $(\bP^1 \times \bP^1)\setminus D$ is not simply connected.
\end{proposition}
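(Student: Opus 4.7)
The plan is to produce a non-trivial étale double cover of $(\bP^1\times \bP^1)\setminus D$, exactly parallel to the proof of \Cref{p2notsimplyconnected}. A reduced effective anticanonical divisor $D\subset \bP^1\times \bP^1$ is the zero locus of a bihomogeneous polynomial $F(x_0,x_1,y_0,y_1)$ of bidegree $(2,2)$ with no repeated irreducible factor. Since $F\in H^0(\cO(1,1)^{\otimes 2})$, one has a standard cyclic double cover $\pi\colon Y \to \bP^1\times\bP^1$ branched along $D$ associated to the pair $(\cO(1,1),F)$.

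Concretely, after embedding $\bP^1\times \bP^1$ as the Segre quadric $Q=\{z_0z_3=z_1z_2\}\subset \bP^3$, the form $F$ becomes, modulo the ideal of $Q$, a quadratic form $G(z_0,z_1,z_2,z_3)$. One then realizes $Y$ as the complete intersection
\[
Y = \{z_0z_3=z_1z_2,\ w^2=G(z_0,z_1,z_2,z_3)\}\subset \bP^4,
\]
with $\pi$ given by $[z_0:z_1:z_2:z_3:w]\mapsto [z_0:z_1:z_2:z_3]$.

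I would then verify two points. First, $\pi$ is étale over the complement of $D$: locally on $Y$ the equation is $w^2-F$, and at any point where $F\neq 0$ we have $w\neq 0$, so the partial derivative $\partial_w(w^2-F)=2w$ is nonzero. Second, $Y$ is irreducible: otherwise the affine equation $w^2=F$ would factor, forcing $F=H^2$ for some section $H$ of $\cO(1,1)$, i.e.\ a bidegree $(1,1)$ form, which contradicts the reducedness of $D$. Combining these two facts, the restriction $\pi^{-1}((\bP^1\times\bP^1)\setminus D)\to (\bP^1\times\bP^1)\setminus D$ is a connected non-trivial étale double cover, so the complement is not simply connected.

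There is no real obstacle here: the only step requiring a moment of care is ruling out that $F$ is globally a square, which is immediate from the reducedness assumption since any repeated factor of a bidegree $(2,2)$ form would necessarily be of bidegree $(1,1)$.
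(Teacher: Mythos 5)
Your proof is correct and takes essentially the same route as the paper: the paper adjoins $\sqrt{f}$ for a rational function $f$ with $\Div(f)=D-2H-2V$, which is exactly the cyclic double cover branched along $D$ that you construct explicitly on the Segre quadric, and both arguments conclude via the induced nontrivial étale double cover of the complement. One harmless quibble: your closing claim that any repeated factor of a bidegree $(2,2)$ form must have bidegree $(1,1)$ is false as stated (e.g.\ $x_0^2y_0y_1$ has the repeated factor $x_0$), but it is not needed, since all your argument uses is that $F=H^2$ would force a repeated factor, contradicting reducedness of $D$.
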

\begin{proof}
    Let us pick an horizontal fiber $H$ and a vertical one $V$ such that neither $H$ nor $V$ are contained in $D$. Since $D$ is anticanonical it has class $(2,2)$ in $\Pic(\bP^1 \times \bP^1)\cong \bZ\times \bZ$. It follows that the divisor $D-2H-2V$ is principal, and so there exists a rational function $f \in \bC(\bP^1 \times \bP^1)$ such that $\Div(f)=D-2H-2V$. Then there exists a surface $Y$ with function field $\bC(\bP^1 \times \bP^1)(\sqrt{f})$ together with a non-trivial double cover $\pi \colon Y\to \bP^1 \times \bP^1$ ramified only along $D$. In particular, $\pi$ restricts to a non-trivial étale double cover of $(\bP^1 \times \bP^1)\setminus D$, proving the claim.
\end{proof}
All the other smooth Del Pezzo surfaces are isomorphic (over an algebraically closed field) to a blow-up $\sigma\colon X\to \bP^2$ along at most $8$ points in general position of $\bP^2$. 
Following \Cref{caratterizzazione_divisori}, from now on we will denote by $\widehat{D}$ a given reduced effective anticanonical divisor of $X$ and by $D$ its $\sigma$-image in $\bP^2$. The following result gives a complete characterization of the simply connected complements $X\setminus \widehat{D}$:
\begin{proposition}\label{caratterizzazione_complementi}
    Using the previous notation, we have that:
    \begin{enumerate}[label=(\roman*)]
        \item \label{caso_irriducibile} if $D$ is an irreducible cubic, then $X \setminus \widehat{D}$ is simply connected if and only if $\sigma$ blows up a smooth point of $D$;
        \item \label{caso_conica} if $D=L \cup C$, where $L$ is a line and $C$ a smooth conic, then $X \setminus \widehat{D}$ is simply connected if and only if $\sigma$ blows up a point of $C \setminus L$;
        \item \label{caso_tre_rette} if $D$ is the sum of three lines, then $X \setminus \widehat{D}$ is simply connected if and only if $\sigma$ blows up at least two smooth points of $D$ lying on two distinct lines.
    \end{enumerate}
\end{proposition}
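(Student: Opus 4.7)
The plan is to analyse $\pi_1(X\setminus\widehat{D})$ by comparing it with $\pi_1(\bP^2\setminus D)$ through the blow-up $\sigma$. First I would observe that $\sigma$ restricts to an isomorphism $X\setminus\sigma^{-1}(D) \xrightarrow{\sim} \bP^2\setminus D$, and that $X\setminus\widehat{D}$ is obtained from this open set by gluing back in, for each blown-up point $P_i$ at which $D$ is smooth (equivalently $m_i := E_{P_i}\cdot D = 1$, so that $E_{P_i}\not\subseteq\widehat{D}$ by \Cref{caratterizzazione_divisori}), the open arc $E_{P_i}\setminus\widetilde{D}\cong \bA^1$. By van Kampen, adjoining each such disk kills the class of the meridian around $E_{P_i}$, which via $\sigma$ is identified with the meridian $\gamma_{P_i}$ around $D$ at the smooth point $P_i$ in $\pi_1(\bP^2\setminus D)$. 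I would thus obtain the key reduction
\[
\pi_1(X\setminus\widehat{D}) \;\cong\; \pi_1(\bP^2\setminus D)\,\big/\,\langle\!\langle \gamma_{P_i} : P_i \text{ a smooth point of } D\rangle\!\rangle .
\]

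I would then apply this in each case. In (i), $\pi_1(\bP^2\setminus D) = \bZ/3$ (witnessed by the triple cover $w^3 = F$ from \Cref{p2notsimplyconnected}), and since $D$ is irreducible all meridians are conjugate, so killing any one of them trivialises the group. Hence simple connectedness amounts to blowing up at least one smooth point of $D$; conversely, the pullback of the triple cover yields a non-trivial étale cover in the opposite case. In (ii), $H_1(\bP^2\setminus D) = \bZ\gamma_L\oplus\bZ\gamma_C/(\gamma_L+2\gamma_C) \cong \bZ$ by the generic-line intersection relation: the quotient by $\gamma_L$ is $\bZ/2$ while the quotient by $\gamma_C$ is trivial, and a non-trivial étale double cover in the bad case is constructed from the class $\widetilde{C}+\sum_{P_i\in L\cap C}E_{P_i}$, which equals $2H' - \sum_{P_i\in C\setminus L}E_{P_i}$ in $\Pic(X)$ and is $2$-divisible precisely when no point of $C\setminus L$ has been blown up. In (iii), $H_1(\bP^2\setminus D) = \bZ^3/(\gamma_1+\gamma_2+\gamma_3)$: killing two meridians from distinct lines kills the third via the generic-line relation, whereas killing meridians only on a single line leaves $\bZ$ in the abelianisation, and the obstructing cover in the bad case is a double cover of the form $w^2 = \widetilde{L_i}+\widetilde{L_j} + (\text{exceptional correction})$ analogous to (ii).

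The main technical obstacle is upgrading vanishing of $H_1$ to triviality of $\pi_1$ in the reducible cases, since $\pi_1(\bP^2\setminus D)$ may a priori be non-abelian (e.g.\ $F_2$ when $D$ is three concurrent lines). For transverse unions of a line and a smooth conic in (ii) and for three lines in general position in (iii), abelianness of $\pi_1(\bP^2\setminus D)$ follows from the Zariski--Deligne--Fulton theorem on fundamental groups of complements of nodal plane curves, so $H_1 = 0$ does imply $\pi_1 = 0$. For the degenerate configurations --- a line tangent to the conic in (ii), and three concurrent lines in (iii) --- I would argue either by a direct Zariski--van Kampen computation along a suitable pencil of lines, or, after the prescribed blow-up, by exhibiting $X\setminus\widehat{D}$ as a locally trivial $\bA^1$- or $\bG_m$-bundle over a simply connected base, which forces simple connectedness directly.
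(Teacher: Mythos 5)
Your route is genuinely different from the paper's and, in outline, it works. You compute $\pi_1(\bP^2\setminus D)$ for each configuration and then pass to $X\setminus\widehat{D}$ by killing the meridians at the blown-up smooth points; the identification of the meridian of $E_{P_i}$ with a meridian of $D$ at $P_i$, and the presentation of $\pi_1(X\setminus\widehat{D})$ as the quotient of $\pi_1(\bP^2\setminus D)$ by the normal closure of those meridians, are both correct (the latter is the standard statement about removing a smooth divisor from a smooth variety; your van Kampen phrasing is fine). The paper never computes $\pi_1(\bP^2\setminus D)$: instead it first proves (\Cref{finitely_generated}, via the surjectivity \Cref{quoziente} and explicit models such as $\bA^1\times\bG_m$) that $\pi_1(X\setminus\widehat{D})$ is finitely generated abelian, and then kills all finite \emph{cyclic} covers by a Kummer-theoretic argument on $X$ itself: an $n$-torsion relation $\Div(f)\equiv_n a\widehat{D}$ intersected with an exceptional curve $E$ (using $\widehat{D}.E=1$) forces $a\equiv 0$, hence triviality. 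Your approach buys an explicit description of all the groups involved and localizes the group theory on $\bP^2$; the paper's buys independence from Zariski--van Kampen-type computations, at the cost of needing the abelianness lemma. The explicit Kummer covers you build in the non-simply-connected cases (e.g.\ the $2$-divisibility of $2H'-\sum_{P_i\in C\setminus L}E_{P_i}$) are essentially the same as the paper's and are correct.

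Two concrete gaps remain in your sketch. First, in case (i) you assert $\pi_1(\bP^2\setminus D)\cong\bZ/3$ "witnessed by the triple cover $w^3=F$"; the cover only exhibits a $\bZ/3$ quotient, not the full group, and for the \emph{cuspidal} cubic the Zariski--Deligne--Fulton theorem you later invoke does not apply (the curve is not nodal). You need a separate argument there, e.g.\ the paper's trick: removing the cuspidal tangent gives $\bC^2$ minus the graph $z=x^3$, i.e.\ $\bA^1\times\bG_m$, so by \Cref{quoziente} the group is cyclic, hence $\bZ/3$ by the $H_1$ computation. (The same issue does not arise for smooth and nodal cubics, which are classical.) Second, your fallback for the degenerate configurations, "exhibiting $X\setminus\widehat{D}$ as a locally trivial $\bA^1$- or $\bG_m$-bundle over a simply connected base, which forces simple connectedness", is false as stated for $\bG_m$-fibers: $\bA^1\times\bG_m$ is such a bundle and has $\pi_1=\bZ$ (this is exactly why one blow-up on a single one of three concurrent lines does \emph{not} give a simply connected complement). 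Your other alternative does work: for the tangent line-plus-conic and the concurrent-lines cases one knows $\pi_1(\bP^2\setminus D)$ explicitly ($\bZ$, resp.\ $F_2$ with the three meridians multiplying to the identity), and quotienting by the meridians prescribed in \Cref{caratterizzazione_complementi} gives the trivial group; but as written this part of your argument is only a sketch and one of its two proposed implementations fails.
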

Before proving \Cref{caratterizzazione_complementi}, we collect some auxiliary statements.
\begin{lemma}\label{quoziente}
	Let $X$ be a (connected) complex manifold, $Z \subset X$ a proper closed complex submanifold, $Y = X\setminus Z$ its complement, $p \in Y$ a point. Then the inclusion $i \colon Y \hookrightarrow X$ induces a surjective homomorphism $i_{*}\colon \pi_1(Y,p) \rightarrow \pi_1(X,p)$ between the corresponding fundamental groups.
\end{lemma}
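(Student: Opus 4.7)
The plan is to prove that every element of $\pi_1(X,p)$ is represented by a loop that already lies in $Y$, i.e., that an arbitrary continuous loop $\gamma \colon (S^1, *) \to (X, p)$ can be homotoped rel basepoint, within $X$, to a loop whose image avoids $Z$. Since $Z$ is a proper closed complex submanifold of the complex manifold $X$, each component of $Z$ has complex codimension at least $1$, hence real codimension at least $2$ in $X$. This dimension gap is the whole reason the lemma holds.

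First I would approximate $\gamma$ by a smooth loop $\tilde\gamma$ homotopic to $\gamma$ rel basepoint; this is standard, as smooth maps are dense in continuous maps out of a compact manifold into a smooth manifold. Next I would apply the transversality theorem to perturb $\tilde\gamma$ into a smooth loop $\tilde\gamma'$ transverse to the submanifold $Z$. Because $\dim_\R S^1 = 1$ is strictly smaller than $\mathrm{codim}_\R Z \geq 2$, transversality forces $\tilde\gamma'(S^1) \cap Z = \emptyset$; in other words, transversality to $Z$ with a source of real dimension less than $\mathrm{codim}_\R Z$ means simply disjointness from $Z$. Thus $\tilde\gamma'$ lies in $Y$.

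The remaining point is to make sure the approximation and perturbation can be carried out rel the basepoint and through a homotopy in $X$. Since $p \notin Z$ and $Z$ is closed, there is a neighborhood $U$ of $p$ in $X$ disjoint from $Z$. Using a smooth bump function on $S^1$ that vanishes near the basepoint, one can modify the perturbation so that it is the identity on a neighborhood of $*$, ensuring both $\tilde\gamma'(*) = p$ and that the perturbation homotopy fixes $p$ throughout. Concatenating the smooth-approximation homotopy with the transverse-perturbation homotopy gives a homotopy in $X$ rel basepoint from $\gamma$ to $\tilde\gamma'$, so $[\tilde\gamma'] \in \pi_1(Y,p)$ satisfies $i_*[\tilde\gamma'] = [\gamma]$.

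The only real obstacle is the bookkeeping around the basepoint: one must guarantee that the transverse perturbation can be taken small enough, and localized enough, to preserve the basepoint without destroying transversality or leaving $X$. This is handled by standard arguments involving bump functions and the openness of transversality, so the argument is essentially a direct application of the transversality theorem together with the codimension-$2$ condition coming from the complex structure.
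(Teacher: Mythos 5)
Your argument is correct: the smooth-approximation plus transversality argument, with the dimension count $\dim_{\R} S^1=1<\operatorname{codim}_{\R} Z\ge 2$ forcing a transverse loop to be disjoint from $Z$, and the basepoint kept fixed via a neighborhood of $p$ avoiding the closed set $Z$, is the standard proof of this fact. The paper itself gives no argument here --- it simply cites Lemma 2, Section 3 of Corvaja--Zannier --- and your proof is essentially the classical general-position argument underlying that cited lemma, so there is nothing substantive to compare beyond noting that you have supplied a self-contained proof where the paper defers to a reference.
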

\begin{proof}
	This is \cite[Lemma $2$, Section $3$]{CZ_advances}.
\end{proof} 
\begin{lemma}\label{finitely_generated}
    Using the previous notation, if $X \setminus \widehat{D}$ is not $\bP^2$ minus three concurrent lines, then $\pi_1(X \setminus \widehat{D})$ is abelian and finitely generated.
\end{lemma}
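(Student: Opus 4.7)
\textit{Proof Plan.} The plan is to reduce the computation of $\pi_1(X \setminus \widehat{D})$ to that of the complement of a (modified) plane cubic in $\bP^2$, and then perform a case analysis on the geometry of $D$, invoking known results on fundamental groups of plane curve complements.

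The first step is a topological reduction via $\sigma$. Let $S$ denote the set of indices $i$ for which $D$ is smooth at the blown-up point $P_i$, so that the coefficient of $E_i$ in $\widehat{D}$ is $m_i-1=0$ (whereas for $i\notin S$ we have $m_i=2$ and $E_i\subset\widehat{D}$). The blow-up then restricts to a proper surjection
\[
    \sigma\colon X\setminus\widehat{D}\;\longrightarrow\; V\coloneqq \bP^2\setminus\bigl(D\setminus\{P_i:i\in S\}\bigr),
\]
whose fibers are contractible: a single point over $V\setminus\{P_i:i\in S\}$, and $E_i\setminus(\widetilde{D}\cap E_i)\cong \bA^1$ over $P_i$ for $i\in S$. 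Hence $\sigma$ is a weak homotopy equivalence and induces $\pi_1(X\setminus\widehat{D})\cong\pi_1(V)$; equivalently, a van Kampen argument combined with \Cref{quoziente} gives
\[
    \pi_1(V)\;\cong\;\pi_1(\bP^2\setminus D)\,\big/\,\langle\,\text{meridians at }P_i,\ i\in S\,\rangle^{\mathrm{norm}}.
\]

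The second step is a case analysis on the cubic $D\subset\bP^2$, which by \Cref{caratterizzazione_divisori} is reduced with multiplicity at most $2$ at the blown-up points. When $D$ is smooth, or has only nodes as singularities (nodal irreducible cubic; smooth conic plus a transverse line; three non-concurrent lines), the theorem of Deligne and Fulton ensures that $\pi_1(\bP^2\setminus D)$ is already abelian and finitely generated, so the quotient description settles those cases. When $D$ is a smooth conic plus a tangent line, the pencil of lines through the tangency point identifies $\bP^2\setminus D$ with $\bA^1\times\bG_m$, whose fundamental group is $\bZ$.

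For the remaining non-abelian base cases I would verify that the meridian relations from the blow-ups abelianize the quotient. If $D$ is three concurrent lines meeting at $Q$, then \Cref{caratterizzazione_divisori} forbids blowing up $Q$ (multiplicity $3>2$), so every $P_i$ lies on a smooth part of $D$; killing a single such meridian in $F_2=\pi_1(\bP^2\setminus D)$ leaves either $\bZ$ or the trivial group, while the excluded case $X=\bP^2$ matches the exception in the statement. If $D$ is a cuspidal irreducible cubic and $S\neq\emptyset$, then a meridian in $\bZ/2*\bZ/3=\langle a,b\mid a^2,b^3\rangle$ is conjugate to $ab$; killing it forces $b=a^{-1}=a$ and then $a=1$, so the quotient is trivial. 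Finite generation in all cases is immediate from the Zariski--van Kampen finite presentation of $\pi_1(\bP^2\setminus D)$.

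The hardest point, and the main obstacle, is the sub-case where $D$ is cuspidal and every blow-up is at the cusp itself, so that $S=\emptyset$ and the naive quotient formula gives $\pi_1(V)=\bZ/2*\bZ/3$. Settling the lemma here requires a more delicate local analysis near the exceptional divisor: the tangential intersection of $\widetilde{D}$ and $E$ inside $\widehat{D}$ should impose further relations beyond those captured by the formula above, or one must recognise that this configuration falls outside the scope of the applications of the lemma in the sequel.
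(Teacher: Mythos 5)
Your reduction formula $\pi_1(X\setminus\widehat{D})\cong\pi_1(\bP^2\setminus D)/\langle\langle\text{meridians at }P_i,\ i\in S\rangle\rangle$ is fine if justified by the van Kampen argument you mention (gluing in the contractible pieces $E_i\setminus\widetilde{D}\cong\bA^1$ kills exactly the meridians of $D$ at the smooth blown-up points), but your first justification is not: the restriction of $\sigma$ to $X\setminus\widehat{D}\to V$ is \emph{not} proper (the preimage of a point $P_i$, $i\in S$, is $\bA^1$), and $V$ is not even an open subset of $\bP^2$, so the ``proper map with contractible fibers'' argument does not apply as stated. This is repairable; the real problem lies elsewhere.

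The genuine gap is the cuspidal case, and it stems from a wrong group computation. You assert $\pi_1(\bP^2\setminus D)\cong\bZ/2*\bZ/3$ for $D$ a cuspidal cubic; this is impossible, since the abelianization of that free product is $\bZ/6$, while $H_1(\bP^2\setminus D)\cong\bZ/3$ for any irreducible cubic. In fact $\pi_1(\bP^2\setminus D)\cong\bZ/3$: removing the principal tangent $L$ at the cusp, the chart $\bP^2\setminus L$ is an affine plane in which $D$ becomes the graph $z=x^3$, so $\bP^2\setminus(D\cup L)\cong\bA^1\times\bG_m$ has fundamental group $\bZ$, and \Cref{quoziente} makes $\pi_1(\bP^2\setminus D)$ a cyclic (hence abelian) quotient of it --- this is precisely how the paper disposes of the cuspidal case. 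Because you work with the incorrect group, you are forced to flag as unresolved exactly the configurations where no meridian relation is available, namely $D$ cuspidal with $S=\emptyset$ (blow-ups only at the cusp) and, implicitly, $X=\bP^2$ with $D$ cuspidal; but these lie squarely within the lemma's scope (only three concurrent lines on $\bP^2$ are excluded), so your proposal does not prove the statement. The fix is not ``a more delicate local analysis near the exceptional divisor'' nor an appeal to how the lemma is used later, but simply the observation above, after which your case analysis (which otherwise parallels the paper's, with the pleasant variant of killing a meridian in $F_2$ for concurrent lines with a blow-up instead of the paper's explicit $\bA^1\times\bG_m$ model) goes through.
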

\begin{proof}
    Using \Cref{quoziente}, if we can prove the statement for a larger complement of $X$, then also $\pi_1(X \setminus \widehat{D})$ will be abelian and finitely generated. Let us first assume that $D$ is not the union of three concurrent lines. Removing the exceptional divisors, it is sufficient to prove the statement for $U\coloneqq \bP^2\setminus D$. There are six possibilities:
	\begin{enumerate}
	    \item $D$ is a smooth cubic curve;\label{enum_1}
	    \item $D$ is a nodal cubic curve;\label{enum_2}
	    \item $D$ is a cuspidal cubic curve;\label{enum_3}
	    \item $D$ is the union of a conic and a line in general position;\label{enum_4}
	    \item $D$ is the union of a conic and a tangent line;\label{enum_5}
	    \item $D$ is the union of three lines in general position.\label{enum_6}
	\end{enumerate}
    In case (\ref{enum_1}) and (\ref{enum_2}) it follows by \cite[Thm. $2$]{serre_revetement} that $\pi^1(U) \cong \bZ/3\bZ$. In case (\ref{enum_3}), if we remove from $U$ the principal tangent at the cusp we get a surface isomorphic to $\bA^1 \times \bG_m$, whose fundamental group is $\bZ$. In case (\ref{enum_4}) again by \cite[Thm. $2$]{serre_revetement} one gets $\pi^1(U) \cong \bZ$. In case (\ref{enum_5}), looking at the linear system of lines through the tangency point, we find that this complement is isomorphic to $\bA^1 \times \bG_m$. In case (\ref{enum_6}) we have $U \cong \bG_m^2$, whose fundamental group is $\bZ^2$.
    
    It remains to deal with the case of $D$ given by three concurrent lines $L_1,L_2,L_3$. From the hypotheses, we may assume that $X$ is blown up in at least one point, which is necessarily distinct from the point where the three lines meet due to \Cref{caratterizzazione_divisori}. Then, contracting all exceptional divisors except one, we may assume without loss of generality that $X$ is the blow-up of $\bP^2$ in a single point $P \in L_1 \setminus (L_2\cup L_3)$. We have already proved in \Cref{soluzione_hassett} that $X\setminus \widehat{D}$ is isomorphic to $\bA^1\times \bG_m$, which has fundamental group equal to $\bZ$. 
    \end{proof}
We can now prove \Cref{caratterizzazione_complementi}.
\begin{proof}[Proof of \Cref{caratterizzazione_complementi}]
    Whenever $D$ has a singular point in a blown-up point $P$, by \Cref{caratterizzazione_divisori} the divisor $\widehat{D}$ contains the exceptional divisor $E_P$ above $P$. In particular, the topology of $X \setminus \widehat{D}$ is the same as the surface obtained contracting $E_P$. Therefore from now on we will assume that $\sigma$ does not blow up any singular point of $D$.

    First we prove statement \labelcref{caso_irriducibile}. Let us assume that no smooth point of $D$ is blown-up. Then $\sigma$ does not blow-up any point, so that $X \setminus \widehat{D} \cong \bP^2 \setminus D$, which is not simply connected by \Cref{p2notsimplyconnected}. Let us now assume that $\sigma$ blows-up at least one smooth point of $D$, and we will prove that $X\setminus \widehat{D}$ is simply connected. It is sufficient to show that $X \setminus \widehat{D}$ admits no non-trivial finite cyclic topological cover. Indeed, if $X \setminus \widehat{D}$ has no non-trivial finite cyclic topological cover then $\pi_1(X \setminus \widehat{D})$ has no non-trivial finite cyclic quotient. Using \Cref{finitely_generated}, $\pi_1(X \setminus \widehat{D})$ would be a finitely generated abelian group with no non-trivial cyclic quotient and so it would be trivial.
    
    Let $Y \to X\setminus \widehat{D}$ be a cyclic topological cover of degree $n>1$. By a Theorem of Grauert and Remmert (see \cite[Chapter 6]{SeTGT}), $Y$ has a structure of algebraic variety such that the covering map is algebraic. Then the function field of $Y$ is obtained taking the $n$-th root of a rational function $f \in \mathbb{C}(X\setminus \widehat{D})^{*}= \mathbb{C}(X)^{*}$. The property that $Y \to X\setminus \widehat{D}$ is unramified means that all the divisors of zeros and poles of $f$ in $X\setminus \widehat{D}$ have multiplicity divisible by $n$. In other words, for each irreducible curve $C \not\subset \widehat{D}$, the multiplicity of $C$ in the divisor $\Div(f)$ is divisible by $n$. We will show that the multiplicity in $\Div(f)$ of $\widehat{D}$ is also divisible by $n$: since $X$ is simply connected, this implies that $f=g^n$ for some $g \in \mathbb{C}(X)^{*}$, and so the cover is trivial, proving the claim. We have that
	\begin{equation*}
		\Div(f) \equiv_n a\widehat{D},
	\end{equation*}
	where $a \in \bZ/n\bZ$. Using that $\widehat{D}.E=1$ we find 
	\begin{equation*}
		0=\Div(f).E \equiv_n a\widehat{D}.E=a,
	\end{equation*}
    proving the claim.

    Next, we deal with statement \labelcref{caso_conica}. If we assume that $\sigma$ blows up a point of $C \setminus L$, the strategy above of intersecting with suitable divisor classes allows to prove that $X\setminus\widehat{D}$ has no non-trivial finite cyclic cover, hence is simply connected. We now assume that all blown-up points of $\sigma$ are contained in $L\setminus C$ and we prove that $X\setminus \widehat{D}$ has a non-trivial étale cover, hence is not simply connected. We have $\Div(f)=C-2L$ for a suitable $f\in \bC(\bP^2)$. We let $g \coloneqq \sigma^*(f) \in \bC(X)$, so that $\Div(g)=\widehat{C}-2\widehat{L}-2\sum_i E_i$, where the $E_i$'s are the exceptional divisors above the blown-up points. The field $\bC(X)(\sqrt{g})$ is the function field of a double cover of $X$ ramified only on $\widehat{C}$, and so a non-trivial étale cover of $X \setminus \widehat{D}$.
    The proof of statement \labelcref{caso_tre_rette} is similar to \labelcref{caso_conica}.
    \end{proof}

According to \Cref{potente}, and due to \Cref{soluzione_hassett}, we expect that the complements of \Cref{caratterizzazione_complementi} satisfy (potentially) the IHP. In order to confirm this expectation we will adapt the double fibration method of Corvaja and Zannier to the context of integral points on conic fibrations.

	\section{Double fibration method}\label{section4}

This chapter is devoted to the proof of \Cref{fondamentalissimo} and its application to \Cref{teorema_principale}. We will first describe the strategy for the proof of \Cref{fondamentalissimo}, which will motivate the hypotheses in its statement. 

\subsection{Strategy of our proof for \texorpdfstring{\Cref{fondamentalissimo}}{Theorem $1.7$}}
We stress that from now on we will use the notation of \Cref{fondamentalissimo} and the convention of \Cref{notation_clarification}. Suitably changing integral model, we may assume that $f$ is a morphism.
Due to \Cref{reduction_finite}, in order to prove that $(X\setminus D)(\O_S)$ is not thin we may argue by contradiction by showing that there \emph{cannot} exist finitely many absolutely irreducible normal projective $K$-varieties $Y_i$ equipped with finite $K$-morphisms $\pi_i \colon Y_i \to X$ of degree $>1$ such that the set $(X\setminus D)(\O_S) \setminus \bigcup_i(\pi_i(Y_i(K)))$ is contained in a curve $Z$. 

We will first prove that the set $\Lambda \coloneqq \{ l \in \bP^1(K) \, \vert \,\text{$C_l(\O_S)$ is infinite}\}$ is not thin (see \Cref{just_need_one}) by taking the conics $C_l$ passing through points $p \in C'_m(\O_S)$ for $m$ varying in $M$ (here we use \ref{ipotesi_ramificazione} and \ref{ipotesi_non-thin}). We then have that the two conic fibrations $\lambda$ and $\mu$ both admit \lq\lq many\rq\rq{} fibers containing infinitely many integral points. Our strategy is to find infinitely many such fibers for which the restrictions of the covers $\pi_i$ to these fibers lift only finitely many integral points: this gives the sought contradiction, since we assumed that the covers $\pi_i$ lift all integral points (outside of the curve $Z$).

\begin{remark}\label{thin_fiber_liftano_sempre}
    It is essential that the set of fibers containing infinitely many integral points is \emph{not} thin: this is because the rational points (in particular the integral points) contained in a thin set $T \subset \bP^1(K)$ of fibers can always be lifted to a cover. Indeed, by definition of thin set, there exists a finite map $\phi \colon \mathcal{C} \to \bP^1$ with no $K$-rational section such that $T \subset \phi(\mathcal{C}(K))$. Then by base change one gets a finite map $X \times_{\bP^1} \mathcal{C} \to X$ lifting all the rational points contained in the fibers above $T$. Notice that $X \times_{\bP^1} \mathcal{C} \to X$ is a non-trivial cover since it does not have a $K$-rational section (this comes from the fact that the generic fiber of $\lambda \colon X \to \bP^1$ is irreducible over $\overline{K(\bP^1)}$).
\end{remark}

One useful observation is that, given a conic $C$ with a divisor at infinity $D$ consisting of two points, a cover of $C$ can lift at most finitely many integral points of $C\setminus D$, unless it is totally ramified on $D$ and étale on $C\setminus D$ (this is \Cref{trick_siegel}, which is a consequence of Siegel's \Cref{siegel}).
In order to understand whenever this happens \lq\lq generically\rq\rq{} for the restrictions of a cover $\pi_i$ to the the fibers of $\lambda$, we need the following definition: 
\begin{definition}\label{lambda_unramified}
    In the notation of \Cref{fondamentalissimo}, we say that a cover $\pi_i$ is \emph{$\lambda$-unramified} if
    \begin{itemize}
        \item $\pi_i^{-1}(C_l)$ is absolutely irreducible for all but finitely many $l$;
        \item $\pi_i$ is totally ramified on the components of $D$ which are not constant for $\lambda$;
        \item the branch locus of $\pi_i$ is contained in the union of $D$ and finitely many fibers of $\lambda$.
    \end{itemize}
    Otherwise we say that $\pi_i$ is \emph{$\lambda$-ramified}.
\end{definition}
One can show that if a cover $\pi_i$ is $\lambda$-ramified, then there exists a thin set $T_i \subset \bP^1(K)$ such that for $l\notin T_i$ the restriction $\pi_i \colon \pi_i^{-1}(C_l) \to C_l$ lifts only finitely many integral points of $C_l$. If all covers were $\lambda$-ramified we would find a contradiction since $\Lambda$ is not thin, so we can assume that there is at least one $\lambda$-unramified cover.
If one assumes that the $\pi_i$'s lift all points of $(X\setminus D)(\O_S)\setminus Z(K)$ then the $\lambda$-unramified covers behave in the opposite way: that is, there exists a thin set $T$ such that for $l \in \Lambda \setminus T$ \emph{all} points of $C_l(\O_S)$ are lifted to at least one of the $\lambda$-unramified covers. This is accomplished using that $\pi_i \colon \pi_i^{-1}(C_l)\setminus \pi^{-1}(D) \to C_l\setminus D$ is an ètale cover of homogeneous spaces for tori, which allows to exploit the group structure of the associated tori (see \Cref{everything_lifts}).

If some of the $\pi_i$'s were both $\lambda$-unramified and $\mu$-unramified then our strategy could fail: indeed these covers could possibly lift all the integral points in the fibers of both fibration outside a thin set of fibers, and the remaining $\pi_i$'s could lift the integral points in the thin set of fibers (see \Cref{thin_fiber_liftano_sempre}).
This is where hypothesis \ref{ipotesi_topologica} comes into play: it ensures that a $\lambda$-unramified cover $\pi_i$ is necessarily $\mu$-ramified, since otherwise $\pi_i$ would induce a non-trivial ètale cover of the simply connected surface in \ref{ipotesi_topologica}. Then, by the same arguments used for $\lambda$-ramified covers, there must be a thin set $T'$ such that for $m \notin T'$ only finitely many integral points of $C'_m(\O_S)$ lift to the $\lambda$-unramified covers.

Using \ref{ipotesi_ramificazione} and \ref{ipotesi_non-thin} one can find an $m\in M\setminus T'$\footnote{Actually one can find infinitely many such $m$.} for which there are infinitely many points $p \in C'_m(\O_S)$ such that $\lambda(p) \notin T$ and the conic $C_l$ passing through $p$, namely $C_{\lambda(p)}$, contains infinitely many integral points. Since $\lambda(p) \notin T$ then all points of $C_{\lambda(p)}(\O_S)$, including $p$, would be lifted to $\lambda$-unramified covers. However $m\notin T'$ so the $\lambda$-unramified covers lift only finitely many points of $C'_m(\O_S)$, which is a contradiction.

We will now collect a series of preliminary results which will be used in the proof of \Cref{fondamentalissimo}.

\subsection{Preliminaries}

We begin with a useful consequence of Siegel's Theorem:
\begin{lemma}\label{trick_siegel}
    Let $(C,D)$ be a pair given by a smooth projective $K$-rational curve $C$ and a $K$-divisor $D$ given by the sum of two distinct $\overline{K}$-points.
    Let $V$ be a projective curve which is irreducible over $K$ and let $\pi \colon V \to C$ be a finite map. Let $\nu \colon V' \to V$ be a normalization of $V$ and let $D' \coloneqq (\pi \circ \nu)^{-1}(D)$. Assume (at least) one of the following holds:
    \begin{enumerate}
        \item $\# D' \ge 3$, that is, $\pi \circ \nu$ is not totally ramified on $D$; \label{three_points}
        \item $\pi \circ \nu$ has a branch point outside the support of $D$. \label{ramified}
    \end{enumerate}
    Then only finitely many points of $C(\O_S)$ lift to $V(K)$.
\end{lemma}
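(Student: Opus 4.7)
The plan is to reduce to Siegel's theorem by showing $\chi(V' \setminus D') > 0$ in both cases, so that \Cref{siegel} forces the set of integral lifts to be finite.

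First I would handle the geometrically reducible case: if $V$ is $K$-irreducible but not absolutely irreducible, then $V(K)$ is already finite by \Cref{remark_serre}, and there is nothing to prove. So assume $V$, and hence its normalization $V'$, is absolutely irreducible. Any lift $q \in V(K)$ of some $p \in C(\O_S)$ pulls back to the non-empty finite set $\nu^{-1}(q) \subset V'(K)$, and by \Cref{lift_integrale} applied to $\pi \circ \nu$, after enlarging $S$ to some finite $S'$ those preimages lie in $(V' \setminus D')(\O_{S'})$. Since the fibers of $\pi \circ \nu$ are finite, it therefore suffices to show the latter set is finite.

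Set $n = \deg(\pi \circ \nu)$ and $g = g(V')$, and write $D = p_1 + p_2$ with $r_i$ the number of preimages of $p_i$ in $V'$, so that $\# D' = r_1 + r_2$. Since $C$ has genus $0$, Riemann--Hurwitz gives $2g - 2 = -2n + \deg R$, where $R$ is the ramification divisor of $\pi \circ \nu$. The contribution to $\deg R$ coming from points above $D$ equals $R_D = (n - r_1) + (n - r_2) = 2n - \# D'$, so decomposing $\deg R = R_D + R_{\mathrm{out}}$ yields the clean identity
$$\chi(V' \setminus D') \;=\; 2g - 2 + \# D' \;=\; \deg R - 2n + \# D' \;=\; R_{\mathrm{out}}.$$

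It then remains to check $R_{\mathrm{out}} > 0$ in each case. In the second case, this is exactly the hypothesis. In the first case, if $g = 0$ then $\deg R = 2n - 2$, so $R_{\mathrm{out}} = \# D' - 2 \ge 1$; if $g \ge 1$ then $\deg R = 2g - 2 + 2n \ge 2n$ while $R_D = 2n - \# D' \le 2n - 2$, again giving $R_{\mathrm{out}} > 0$. In either situation $\chi(V' \setminus D') > 0$, and \Cref{siegel} finishes the proof. The only mild obstacle is the Riemann--Hurwitz bookkeeping producing the clean identity $\chi = R_{\mathrm{out}}$; once it is in hand, both hypotheses of the lemma translate directly into positivity of $\chi$, and the passage from lifts in $V(K)$ to integral points on $V'$ is a routine consequence of \Cref{lift_integrale}.
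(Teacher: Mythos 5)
Your proof is correct and follows essentially the same route as the paper's: reduce to the absolutely irreducible normalization via \Cref{remark_serre} and \Cref{lift_integrale}, then conclude with Siegel's theorem after a Riemann--Hurwitz computation (your identity $\chi(V'\setminus D')=R_{\mathrm{out}}$ just repackages the paper's two-case argument, where $\#D'\ge 3$ gives $\chi>0$ directly and otherwise total ramification over $D$ plus an extra branch point forces $g(V')\ge 1$). One small slip: $\nu^{-1}(q)$ need not lie in $V'(K)$ when $q$ is a singular point of $V$, but since $\nu$ is an isomorphism away from finitely many points this affects only finitely many lifts and does not harm the conclusion.
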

\begin{proof}
    Since $\nu$ is an isomorphism outside of a finite set, it is enough to prove that only finitely many points of $(C\setminus D)(\O_S)$ lift to $V'(K)$.
    We can also assume that $V$ is absolutely irreducible, otherwise $V(K)$ would be finite by \Cref{remark_serre}.
    By \Cref{lift_integrale} there exists an enlargement $S' \supseteq S$ such that, if a point of $C(\O_S)$ lifts to $V'(K)$, then it actually lifts to $(V'\setminus D')(\O_{S'})$. It is then sufficient to show that $(V'\setminus D')(\O_{S'})$ is finite. If $\# D' \ge 3$ this follows from Siegel's \Cref{siegel}. If $\# D' < 3$ then $\pi \circ \nu$ is totally ramified on the two points at infinity of $C$ and also ramifies outside of $D$. Applying Riemann-Hurwitz we find that the genus of $V'$ is positive, so that $(V'\setminus D')(\O_{S'})$ is again finite by Siegel's \Cref{siegel}.
\end{proof}
We now need a preliminary Lemma:
\begin{lemma}\label{lemma_rango_sottogruppi}
    Let $G$ be an abelian group, $B$ a finite rank subgroup of positive rank. Let $A_i$ be subgroups of $G$ and $t_i \in G$ elements such that $B \setminus \cup_i (t_i+A_i)$ is finite. Then this complement is actually empty.
\end{lemma}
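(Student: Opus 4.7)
The plan is to reduce the statement to B.~H.~Neumann's classical lemma on finite coverings of groups by cosets of subgroups, which asserts that in any such covering one can discard every coset whose underlying subgroup has infinite index in the ambient group without destroying the cover.

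First I would translate the hypothesis from $G$ down to $B$. Setting $B_i \coloneqq A_i \cap B$, for each $i$ the intersection $(t_i + A_i) \cap B$ is either empty or a single coset $t'_i + B_i$ of $B_i$ in $B$, where $t'_i$ is any chosen element of that intersection (a general property of cosets in abelian groups). After discarding the indices producing empty intersections, the hypothesis becomes
\[
B \setminus \bigcup_i (t'_i + B_i) = F
\]
for some finite set $F \subset B$.

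Next I would rewrite this as a genuine finite covering of $B$ by cosets of subgroups: each point $b \in F$ is the coset $b + \{0\}$ of the trivial subgroup of $B$, yielding
\[
B = \bigcup_i (t'_i + B_i) \cup \bigcup_{b \in F}\{b\}.
\]
Neumann's lemma now permits us to delete every coset whose underlying subgroup has infinite index in $B$. Since $B$ has positive rank it is infinite, so the trivial subgroup $\{0\}$ has infinite index in $B$, and consequently every singleton $\{b\}$ is removable. What remains is $B = \bigcup_{i \in J}(t'_i + B_i)$ for some subset $J$ of the indices (precisely those with $[B : B_i] < \infty$), which in particular forces $F = \emptyset$ and proves the lemma.

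The only nontrivial ingredient is Neumann's lemma itself, which I would invoke as a black box; every other step is routine bookkeeping, so I do not anticipate any real obstacle. Should one prefer to avoid citing this result, a self-contained alternative is to pick an element $c \in B$ of infinite order together with a hypothetical $b$ in the complement, and to analyse the subset $\{n \in \bZ : b + nc \in t_i + A_i\}$ for each $i$; each such subset is empty or a coset of a subgroup of $\bZ$, their union has finite complement in $\bZ$, and a direct argument modulo the least common multiple of the moduli shows that the residue class of $0$ must lie in one of these cosets, contradicting $b \notin \bigcup_i (t_i + A_i)$.
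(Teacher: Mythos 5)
Your argument is correct, and its first half is exactly the paper's reduction: intersect each coset with $B$, observe that $(t_i+A_i)\cap B$ is empty or a single coset of $B_i\coloneqq A_i\cap B$ in $B$, and discard the empty ones. Where you diverge is the key lemma for the final step: the paper at this point simply cites Lemma~3.2 of Corvaja--Zannier \cite{CZ_hilbert} (applied with ambient group $B$, subgroups $B_i$, and translates $t_i+s_i$), whereas you absorb the finite exceptional set $F$ into the covering as cosets of the trivial subgroup and invoke B.~H.~Neumann's theorem that cosets of infinite-index subgroups may be deleted from any finite coset covering; since $B$ is infinite (positive rank gives an element of infinite order), the singletons are removable and $F=\emptyset$. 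This is a genuinely different, and in some ways cleaner, route: it is self-contained modulo a classical result, and it does not use the finite-rank hypothesis at all --- only that $B$ is infinite --- so it proves a slightly more general statement than the one quoted. Your fallback elementary argument is also sound: for an infinite-order $c\in B$ and a hypothetical uncovered $b$, each set $\{n\in\bZ : b+nc\in t_i+A_i\}$ is empty or a coset $a_i+d_i\bZ$, and if $0$ lies in none of them then (writing $N$ for the lcm of the nonzero $d_i$) every progression with $d_i\ge 1$ not containing $0$ misses $N\bZ$ entirely, so all but finitely many multiples of $N$ remain uncovered, contradicting finiteness of the complement; this is essentially the proof of Neumann's lemma in the abelian case and plays the role of the uncited content of \cite[Lemma~3.2]{CZ_hilbert}.
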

\begin{proof}
    Define $B_i \coloneqq A_i \cap B$. If $B \cap (t_i+A_i) = \emptyset$ we can remove $t_i+A_i$ from the union $\cup_i (t_i+A_i)$ since it does not contribute to the difference $B \setminus \cup_i (t_i+A_i)$. Hence we assume that $B \cap (t_i+A_i) \neq \emptyset$ for all $i$. Then there exists $s_i \in A_i$ such that $t_i+s_i \in B$. It follows that $B \cap (t_i+A_i)=t_i+s_i+B_i$. Then one concludes applying \cite[Lemma $3.2$]{CZ_hilbert} with $G=B$, $H_i=B_i$ and $h_i=t_i+s_i$.
\end{proof}
The following result goes in the opposite direction of \Cref{trick_siegel}:
\begin{proposition}\label{everything_lifts}
    Let $\widetilde{U}$ be a smooth projective $K$-rational curve and $D \subset \widetilde{U}$ a $K$-divisor given by two points (over $\overline{K}$). Let $U \coloneqq \widetilde{U}\setminus D$ and suppose we have fixed an $S$-integral model for $U$. Suppose we are given finitely many smooth projective curves $\widetilde{V}_i$ equipped with a finite map $\pi_i \colon \widetilde{V}_i \to \widetilde{U}$ which is totally ramified above $D$ and ètale everywhere else. Let $V_i \coloneqq \widetilde{V}_i \setminus \pi_i^{-1}(D)$.
    Assume that $U(\O_S)$ is infinite and that for all but finitely many $p \in U(\O_S)$, $p$ lifts to (at least) one of the $V_i$'s. Then every $p \in U(\O_S)$ lifts to (at least) one of the $V_i$'s.
\end{proposition}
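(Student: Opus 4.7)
The plan is to exploit the group-theoretic structure of the tori underlying $U$ and the $V_i$'s, and then apply \Cref{lemma_rango_sottogruppi}. Since $\widetilde U$ is a smooth projective $K$-rational curve and $D$ consists of two geometric points, the complement $U$ is a torsor for a one-dimensional $K$-torus; fixing a base point $p_0 \in U(\mathcal{O}_S)$ (non-empty by hypothesis) identifies $U$ with a one-dimensional $K$-torus $T$. By Dirichlet's $S$-unit theorem applied to $T$, the group $U(\mathcal{O}_S)$ is finitely generated, and its rank is strictly positive because $U(\mathcal{O}_S)$ is infinite by hypothesis.

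For each $i$, if $V_i$ is not geometrically irreducible then $V_i(K)$ is finite by \Cref{remark_serre}, and we may absorb its finitely many image points into the given exceptional finite subset of $U(\mathcal{O}_S)$; thus we may assume each $V_i$ is geometrically irreducible. Applying Riemann--Hurwitz to $\pi_i \colon \widetilde V_i \to \widetilde U$, which is totally ramified above the two points of $D$ and étale elsewhere, yields
\[
2g(\widetilde V_i) - 2 \;=\; -2 n_i + 2(n_i - 1) \;=\; -2,
\]
where $n_i = \deg \pi_i$, so $\widetilde V_i$ has genus $0$ and $V_i$ is a torsor for a one-dimensional $K$-torus $T_i$. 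After discarding the indices $i$ with $V_i(K) = \emptyset$ (which contribute nothing to the lifting), we fix $q_i \in V_i(K)$, thereby identifying $V_i$ with a one-dimensional $K$-torus $T_i$ having identity $q_i$. Defining $\phi_i(x) := \pi_i(x) - \pi_i(q_i)$ in the group $T$ produces a surjective homomorphism of $K$-tori $\phi_i \colon T_i \to T$. The classification of one-dimensional $K$-tori by their splitting field forces $T_i \cong T$, and every surjective endomorphism of $T$ equals $[n_i]$ up to an automorphism; hence $\phi_i(T_i(K)) = n_i \cdot T(K)$. Setting $t_i := \pi_i(q_i) \in T(K)$ and $A_i := n_i \cdot T(K)$, a subgroup of $G := T(K)$, we get
\[
\pi_i(V_i(K)) \;=\; t_i + A_i,
\]
and $A_i \cap U(\mathcal{O}_S)$ is of finite index in $U(\mathcal{O}_S)$, since $U(\mathcal{O}_S)/n_i \cdot U(\mathcal{O}_S)$ is finite.

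By hypothesis, $U(\mathcal{O}_S) \setminus \bigcup_i \pi_i(V_i(K)) = U(\mathcal{O}_S) \setminus \bigcup_i (t_i + A_i)$ is finite. Applying \Cref{lemma_rango_sottogruppi} with ambient group $G = T(K)$, finite-rank subgroup $B = U(\mathcal{O}_S)$ of positive rank, subgroups $A_i \subseteq G$, and elements $t_i \in G$, we conclude that this complement is in fact \emph{empty}. That is, every $p \in U(\mathcal{O}_S)$ lies in some $\pi_i(V_i(K))$, hence lifts to $V_i$. The main obstacle is the structural step identifying $\pi_i(V_i(K))$ as a single coset of a finite-index subgroup of $T(K)$: this relies on the Riemann--Hurwitz genus computation together with the classification of one-dimensional $K$-tori and their isogenies, which forces $\pi_i$ to be, up to translation, the $n_i$-th power map on a $K$-isomorphic copy of $T$.
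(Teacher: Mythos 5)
Your overall route is the same as the paper's: identify $U$ and the $V_i$ with one-dimensional tori via a rational base point, use Riemann--Hurwitz to see the $\widetilde V_i$ are rational, observe that the pointed \'etale maps are group homomorphisms up to translation, and conclude with \Cref{lemma_rango_sottogruppi}. However, there is a genuine gap at the step where you apply that lemma with $B = U(\O_S)$. The lemma requires $B$ to be a \emph{subgroup} of finite rank, and you assert that $U(\O_S)$ is a finitely generated group of positive rank ``by Dirichlet's $S$-unit theorem applied to $T$''. This is not justified: the integral model of $U$ is an arbitrary fixed model of a \emph{torsor} under a $K$-form $T$ of $\bG_m$, and under the identification $U\cong T$ determined by a base point $p_0$ there is no reason for $U(\O_S)$ to be closed under the group law (integrality is not preserved by multiplication unless the group action extends to the chosen integral model), nor is Dirichlet's theorem about the integral points of such a model -- it controls the $S$-points of a group scheme model of the torus itself. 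In general $U(\O_S)$ is only a union of orbits under the unit group, as the example of $x^2-dy^2=n$ versus $x^2-dy^2=1$ illustrates, so taking $B=U(\O_S)$ is not legitimate as written.

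The paper circumvents exactly this point: by \cite[Proposition~5.1]{Hats} one can choose an $S$-integral model $\mathcal{G}$ of the torus $G$ acting on the given integral model of $U$, so that the genuine group $\mathcal{G}(\O_S)$ acts on $U(\O_S)$; the infinitude of $U(\O_S)$ forces $\mathcal{G}(\O_S)$ to have positive rank, and for each $p\in U(\O_S)$ one applies \Cref{lemma_rango_sottogruppi} with $B=\mathcal{G}(\O_S)$ to the orbit $\mathcal{G}(\O_S).p\subseteq U(\O_S)$ (under the isomorphism sending $p$ to the identity), concluding that the identity, i.e.\ $p$ itself, lifts. Your remaining steps are fine, though the identifications $T_i\cong T$ and $\phi_i(T_i(K))=n_i\cdot T(K)$ (and the finite-index remark) are stronger than needed: it suffices, as in the paper, to write $\phi_i$ as a group morphism $f_i$ followed by a translation and take $A_i=f_i(H_i(K))$.
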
 
\begin{proof}
Take $p \in U(\O_S)$. Since $U$ is a smooth $K$-rational curve with two points at infinity then it is a homogeneous space for a $K$-form $G$ of $\bG_m$. Let $\mathcal{G}$ be an $S$-integral model of $G$ which acts on the $S$-integral model of $U$ (such $\mathcal{G}$ always exists by \cite[Proposition $5.1$]{Hats}), so that $\mathcal{G}(\O_S)$ acts on $U(\O_S)$. From the discussion in Section $5$ of \cite{Hats}, using that $U(\O_S)$ is infinite it follows that $\mathcal{G}(\O_S)$ has positive rank. Looking at the generic fiber of $\mathcal{G}$ we can identify $\mathcal{G}(\O_S)$ with a subgroup of $G(K)$. Since $U$ is a homogeneous space for $G$, the point $p \in U(\O_S)$ gives a $K$-isomorphism between $U$ and $G$ as follows: the point $p$ is sent to the identity $\id \in G$ and a point $x \in U$ is sent to the one and only $g \in G$ for which $g.p=x$.

If $V_i$ does not contain any $K$-rational point we can ignore it. Clearly $V_i(K) \neq \emptyset$ for at least one $i$. Consider one such $V_i$: we have that $\pi_i^{-1}(D)$ is given by two points defined over $\overline{K}$ and $\widetilde{V}_i$ is a smooth rational curve due to Riemann-Hurwitz applied to $\pi_i$. Then $V_i$ is a homogeneous space for a $K$-form $H_i$ of $\bG_m$ and, since $V_i$ contains a $K$-rational point, we obtain an associated $K$-isomorphism between $V_i$ and $H_i$. Composing the $K$-isomorphisms $H_i \cong V_i$ and $U \cong G$ with the ètale map $\pi_i \colon V_i \to U$ we obtain an ètale $K$-morphism $\phi_i \colon H_i \to G$.

Consider the orbit $\mathcal{G}(\O_S).p \subseteq U(\O_S)$: since all but finitely many of the points of $U(\O_S)$ are lifted to $V_i(K)$ for at least one $i$, it follows that all but finitely many of the points in $\mathcal{G}(\O_S)$ are lifted to $H_i(K)$ for at least one $i$.
Since $\phi_i$ is an ètale morphism between $K$-forms of $\bG_m$ then it is the composition of an algebraic group $K$-morphism $f_i \colon H_i \to G$ and the translation by a rational point $t_i \in G(K)$. 
Applying \Cref{lemma_rango_sottogruppi} with $G= G(K)$, $B=\mathcal{G}(\O_S)$, $A_i=f_i(H_i(K))$ and $t_i \in G(K)$ as the coset representatives, we find that \emph{all} points in $\mathcal{G}(\O_S)$ are lifted to points of $H_i(K)$ for at least one $i$. In particular, the identity $\id \in G(\O_S)$ lifts to a rational point of $H_j(K)$ for some $j$. The $K$-isomorphism $G \cong U$ sends $\id$ to $p$, so it follows that $p$ lifts to $V_j(K)$.
\end{proof}

We will now follow the notation of \Cref{fondamentalissimo}.
If $C\subset X$ is a smooth projective curve which is not constant under $\lambda$, then the restriction $\lambda_{\vert C} \colon C \dashrightarrow \bP^1$ extends to a morphism on all of $C$.
In particular, the restriction $\lambda_{\vert C'_m}$ to a smooth fiber $C'_m$ of $\mu$ extends to a finite morphism $\lambda \colon C'_{m} \to \bP^1$ for all but finitely many $m$. Since the degree of $f$ is the number of intersections of two generic $C_l$ and $C'_m$, we have that $\deg(\lambda \colon C'_{m} \to \bP^1)=2$ for all but finitely many $m$. In particular, the branch locus of $\lambda_{|C'_m}$ is given by two points. This allows to reformulate \ref{ipotesi_ramificazione} as follows:
\begin{itemize}
    \item either the branch locus of $\lambda_{|C'_m}$ has no constant component while $m$ varies; or the branch locus of $\lambda_{|C'_m}$ contains \emph{precisely one} point which is constant as $m$ varies and $\lambda(C'_m \cap D)$ is given by two points for all but finitely many $m$.
\end{itemize}
The main consequence of \ref{ipotesi_ramificazione} is the following:
\begin{lemma}\label{solo_finiti_liftano}
    Let $T \subset \bP^1(K)$ be a thin set. If \ref{ipotesi_ramificazione} holds, then for all but finitely many $m \in \bP^1(K)$ the set $\lambda(C'_m(\O_S)) \cap T$ is finite.
\end{lemma}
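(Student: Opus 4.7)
The plan is to cover $T$ by the images of finitely many covers plus a finite set, and then apply \Cref{trick_siegel} to suitable pullbacks along $\lambda|_{C'_m}$.

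First I would write $T \subseteq W(K) \cup \bigcup_{j=1}^n \phi_j(Z_j(K))$ with $W \subsetneq \bP^1$ proper closed (so $W(K)$ is finite) and each $\phi_j \colon Z_j \to \bP^1$ a $K$-cover of degree $d_j \geq 2$. Any $Z_j$ not geometrically irreducible has $\phi_j(Z_j(K))$ finite by \Cref{remark_serre} and can be absorbed into the finite contribution, so I will assume each $Z_j$ is geometrically irreducible. Since $W(K)$ contributes only finitely many elements to $\lambda(C'_m(\O_S)) \cap T$, it suffices to show that for each $j$ and for all but finitely many $m$, only finitely many $p \in C'_m(\O_S)$ satisfy $\lambda(p) \in \phi_j(Z_j(K))$. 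Any such $p$ lifts to a $K$-rational point of $Y_m^j := Z_j \times_{\bP^1} C'_m$ via the universal property of the fiber product, so I would apply \Cref{trick_siegel} with $(C, D) = (C'_m, D_m)$ (where $D_m := C'_m \cap D$) to each $K$-irreducible component of $Y_m^j$, verifying condition (2) of that lemma: that for generic $m$, the pullback $\pi_j^m \colon Y_m^j \to C'_m$ admits a branch point outside $D_m$.

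To produce this branch point, I would first note that $\bP^1$ admits no nontrivial \'etale covers, so the branch locus $B_j \subset \bP^1$ of $\phi_j$ satisfies $|B_j| \geq 2$. I then invoke the reformulation of \ref{ipotesi_ramificazione} stated just above the lemma: $\lambda|_{C'_m}$ has at most one constant branch point $b_0$, and when $b_0$ exists, $\lambda(D_m)$ consists of two distinct points for generic $m$. Using that $|D_m| = 2$, I claim that one can find $b \in B_j \setminus \{b_0\}$ with $C_b \not\subseteq D$: indeed, if no such $b$ existed, then either $|B_j \setminus \{b_0\}| \geq 2$ would force $|D_m| \geq 4$ (since $C_b \cap C'_m$ and $C_{b'} \cap C'_m$ are disjoint for distinct $b, b'$), or the unique element $b \in B_j \setminus \{b_0\}$ would satisfy $C_b \subseteq D$, giving $D_m = C_b \cap C'_m$ and $\lambda(D_m) = \{b\}$, contradicting \ref{ipotesi_ramificazione}. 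For such a $b$ and for generic $m$, $b$ is not a branch point of $\lambda|_{C'_m}$, so $(\lambda|_{C'_m})^{-1}(b) = C_b \cap C'_m$ consists of two points, both branch points of $\pi_j^m$; since $C_b \cap D$ is finite and $C'_m$ avoids those points for all but finitely many $m$, these two points lie outside $D_m$, verifying condition (2).

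The main obstacle is precisely this combinatorial step of locating such a $b \in B_j$: the non-constancy clause on $\lambda(D_\mu)$ in \ref{ipotesi_ramificazione} is designed to rule out the obstructing configurations, and the bound $|D_m| = 2$ then forces the existence of a branch point of $\phi_j$ whose $\lambda$-preimage is not absorbed by $D$.
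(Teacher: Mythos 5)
Your overall strategy (pull back each cover $\phi_j$ along $\lambda|_{C'_m}$ and apply \Cref{trick_siegel}) is the same as the paper's, but the step where you produce a branch point outside $D_m$ has a genuine gap. You choose $b\in B_j\setminus\{b_0\}$ with $C_b\not\subseteq D$ and assert that ``$C_b\cap D$ is finite and $C'_m$ avoids those points for all but finitely many $m$''. This is not justified: a point of $C_b\cap D$ may be an indeterminacy point of $\mu$ (the setup only requires the indeterminacy locus of $f$ to be \emph{contained} in $D$), and such a point is a base point of the family $\{C'_m\}$, so \emph{every} $C'_m$ passes through it; then one or both points of $(\lambda|_{C'_m})^{-1}(b)$ can lie in $D_m$ for all $m$, and your argument yields no branch point outside $D_m$ over $b$. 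Your fallback dichotomy does not repair this: the contradiction ``$\lambda(D_m)=\{b\}$, contradicting \ref{ipotesi_ramificazione}'' is only available when the constant branch point $b_0$ exists (that is exactly the case in which \ref{ipotesi_ramificazione} asserts $\lambda(C'_m\cap D)$ consists of two points); when there is no constant branch point, \ref{ipotesi_ramificazione} imposes no condition on $\lambda(D_m)$, so the configuration is not excluded. The paper sidesteps both issues by splitting along the reformulation of \ref{ipotesi_ramificazione}: when no branch point of $\lambda|_{C'_m}$ is constant, the branch loci of $\phi$ and $\lambda|_{C'_m}$ are disjoint for generic $m$, so the pullback has at least four branch points, of which at most two can lie in the two-point set $C'_m\cap D$ (a pure counting argument, with no need to locate which preimages avoid $D$); only in the constant-branch-point case does it use the clause that $\lambda(C'_m\cap D)$ is two points, to rule out the two produced branch points being exactly $C'_m\cap D$.

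A second gap: you apply \Cref{trick_siegel} ``to each $K$-irreducible component'' of $Y^j_m$, but the branch point you produce is a branch point of the total map $\pi^m_j$; a given geometrically irreducible $K$-component need not be ramified there, and a component could even map to $C'_m$ with degree $1$ (a section), in which case it lifts all rational points of $C'_m$ and \Cref{trick_siegel} gives nothing. The paper addresses precisely this: in the no-constant-branch-point case the disjointness of the branch loci forces the fibre product to be smooth and absolutely irreducible, and in the other case it proves absolute irreducibility of the normalization by exhibiting a branch point of $\lambda|_{C'_m}$ that is not a branch point of $\phi$ (so no factorization $\phi=\lambda|_{C'_m}\circ f$ can exist). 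Your write-up needs an analogous irreducibility (or component-by-component ramification) argument before \Cref{trick_siegel} can be invoked.
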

\begin{proof}
    It suffices to prove that, given a $K$-irreducible cover $\phi \colon Z \to \bP^1$ of degree $>1$, for all but finitely many $m \in \bP^1(K)$ the set $\phi(Z(K)) \cap \lambda(C'_m(\O_S))$ is finite.
    We may assume that $Z$ is absolutely irreducible, otherwise $Z(K)$ would be finite. Further, we may assume $Z$ is smooth, since taking its normalization changes $\phi(Z(K))$ on a finite set.

    Assume that the branch locus of $\lambda_{|C'_m}$ has no constant component as $m$ varies. Then for all but finitely many $m$ the branch locus of $\lambda_{|C'_m}$ does not intersect the branch locus of $\phi$. It follows that the fiber product $V_m$ of $\phi$ and $\lambda_{|C'_m}$ is a smooth absolutely irreducible curve. The branch locus of $\phi' \colon V_m \to C'_m$ contains at least four points, since $\lambda_{|C'_m}$ has degree $2$ and is unramified above the branch points of $\phi$. Since $\#(C'_m \cap D)= 2$, the map $\phi'$ must be ramified outside of $C'_m \cap D$. Using \Cref{trick_siegel} we have that only finitely points of $C'_m(\O_S)$ lift to $V_m(K)$, so that the statement follows.

    It remains to deal with the case where the branch locus of $\lambda_{|C'_m}$ contains \emph{precisely one} point which is constant as $m$ varies and $\lambda(C'_m \cap D)$ is given by two points for all but finitely many $m$.
    It follows that, for all but finitely many $m$, the conic $C'_m$ is a smooth $K$-rational curve such that $\#(C'_m \cap D)=2$ and there exists at least one point $p \in \bP^1(K)$ that belongs to the branch locus of $\lambda_{\vert C'_m}$ and is not contained in the branch locus of $\phi$. Let $V_m$ be the fiber product of $\phi$ and $\lambda_{\vert C'_m}$ and let $V'_m$ be a normalization of $V_m$. We claim that $V'_m$ is absolutely irreducible. Assume by contradiction that there exists a proper irreducible component $W$ of $V'_m$. Since the map $V'_m \to Z$ is finite of degree $\deg \lambda_{\vert C'_m}=2$, then the restriction to $W$ would be a degree $1$ morphism $W \to Z$ between smooth projective curves, hence an isomorphism. In particular, there would be a map $f \colon Z \to C'_m$ such that $\phi=\lambda_{\vert C'_m}\circ f$. However this is impossible since $\phi$ is not ramified above $p$, while $\lambda_{\vert C'_m}\circ f$ is.

    Let $x \in Z$ be a ramification point of $\phi$ such that $\phi(x)$ is not a branch point of $\lambda_{\vert C'_m}$. Then there exists two distinct points $y_1,y_2 \in C'_m$ with $\lambda(y_1)=\lambda(y_2)=\phi(x)$. It follows that $(x,y_1),(x,y_2) \in V_m$ are smooth points of $V_m$ and so $(x,y_1),(x,y_2)$ can be identified with two points of the normalization $V'_m$. Since $\phi$ is ramified at $x$, the map $\phi'\colon V'_m \to C'_m$ is ramified at $(x,y_1)$ and $(x,y_2)$. By hypothesis $\{y_1,y_2\}\neq C'_m\cap D$ since $\lambda(C'_m\cap D)$ is not a point. Then $\phi'$ is ramified outside of $C'_m\cap D$ and the statement follows applying \Cref{trick_siegel}.
\end{proof}
Using \ref{ipotesi_non-thin} one can show that $\lambda$ has \lq\lq many\rq\rq{} fibers with infinitely many integral points: 
\begin{cor}\label{just_need_one}
    If \ref{ipotesi_ramificazione} and \ref{ipotesi_non-thin} hold\footnote{It is actually sufficient for $M$ to be infinite, and not necessarily not thin.}, then the set $\Lambda \coloneqq \{l \in \bP^1(K) \, \vert \, C_l(\O_S) \, \text{is infinite} \}$ is not thin.
\end{cor}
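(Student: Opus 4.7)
The plan is to argue by contradiction, using \Cref{solo_finiti_liftano} as the key input: assume $\Lambda$ is thin and show this contradicts \ref{ipotesi_non-thin}. The heart of the argument is the observation that \ref{ipotesi_non-thin} forces each fiber $C'_m$ (for $m \in M$) to meet $\Lambda$ in many places through the map $\lambda$, while \Cref{solo_finiti_liftano} says thin sets can only meet $\lambda(C'_m(\O_S))$ in finitely many points for most $m$.

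More precisely, I would first unpack hypothesis \ref{ipotesi_non-thin}. For every $m \in M$, there are infinitely many integral points $p \in (C'_m \setminus D)(\O_S)$ such that $(C_{\lambda(p)} \setminus D)(\O_S)$ is infinite, which is exactly to say $\lambda(p) \in \Lambda$. Because $\lambda$ restricted to $C'_m$ has degree $2$ for all but finitely many $m$ (as noted in the paragraph preceding \ref{ipotesi_ramificazione}'s reformulation), the map $p \mapsto \lambda(p)$ is at most $2$-to-$1$, so the image of these infinitely many $p$ inside $\Lambda$ is itself infinite. Consequently, for every $m \in M$ outside a finite exceptional set, the intersection $\lambda(C'_m(\O_S)) \cap \Lambda$ is infinite.

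Now suppose, toward a contradiction, that $\Lambda$ is thin. Applying \Cref{solo_finiti_liftano} with $T = \Lambda$ (this is the step where \ref{ipotesi_ramificazione} enters), we deduce that $\lambda(C'_m(\O_S)) \cap \Lambda$ is finite for all but finitely many $m \in \bP^1(K)$. Since $M$ is non-thin, in particular infinite, we can pick $m \in M$ avoiding both finite exceptional sets simultaneously; for this $m$, the previous paragraph says $\lambda(C'_m(\O_S)) \cap \Lambda$ is infinite, while the current paragraph says it is finite. This contradiction proves $\Lambda$ is not thin.

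I do not expect any real obstacle: the proof is essentially a direct combination of \Cref{solo_finiti_liftano} with the quantitative content of \ref{ipotesi_non-thin}, the only minor subtlety being the degree-$2$ remark that ensures the passage from \emph{infinitely many integral points $p$ with $\lambda(p) \in \Lambda$} to \emph{infinitely many values of $\lambda(p)$ in $\Lambda$}. The footnote in the statement (``it is actually sufficient for $M$ to be infinite'') is consistent with this plan, since only the infinitude of $M$ is used to extract an $m$ avoiding the finite exceptional set given by \Cref{solo_finiti_liftano}.
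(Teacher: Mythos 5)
Your proof is correct and takes essentially the same approach as the paper's: both argue by contradiction via \Cref{solo_finiti_liftano}, using \ref{ipotesi_non-thin} together with the generic degree-$2$ property of $\lambda\vert_{C'_m}$ to exhibit an $m \in M$ for which $\lambda(C'_m(\O_S)) \cap \Lambda$ would have to be simultaneously infinite and finite. The only cosmetic difference is that the paper applies the lemma to the subset $\lambda\bigl(\bigcup_{m \in M} A_m\bigr) \subseteq \Lambda$ (with $A_m$ the set of $p \in C'_m(\O_S)$ such that $C_{\lambda(p)}(\O_S)$ is infinite) rather than directly to $\Lambda$, which changes nothing of substance.
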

\begin{proof}
    We let $A_m \coloneqq \{p \in C'_m(\O_S) \, \vert \, \text{$C_{\lambda(p)}(\O_S)$ is infinite}\}$. By hypothesis, if $m \in M$ then $A_m$ is infinite. Since $\lambda(\bigcup_{m \in M}A_m) \subseteq \Lambda$, it is sufficient to prove that $\lambda(\bigcup_{m \in M}A_m)$ is not thin. Assume by contradiction that this set is thin. Using \Cref{solo_finiti_liftano} and the fact that $M$ is infinite, we can find $m \in M$ such that $\lambda(C'_m(\O_S))\cap \lambda(\bigcup_{m \in M}A_m)$ is finite. But $\lambda(A_m)\subseteq \lambda(C'_m(\O_S))\cap \lambda(\bigcup_{m \in M}A_m)$ and $A_m$ is infinite, which gives a contradiction.
\end{proof}

\subsection{Proof of \texorpdfstring{\Cref{fondamentalissimo}}{Theorem $1.7$}}
We can finally proceed to the proof.

\begin{proof}[Proof of \Cref{fondamentalissimo}]
We can always assume that $f$ is a morphism defined on all of $X$, since blowing-up closed points in the indeterminacy locus of $f$ and adding the exceptional locus to the divisor at infinity, we can find an $\O_S$-integral model $(\mathcal{X}',\mathcal{D'})$ for a pair $(X',D')$ such that $(\mathcal{X}'\setminus \mathcal{D}')\cong_{\O_S} (\mathcal{X}\setminus \mathcal{D})$, all the hypotheses of \Cref{fondamentalissimo} are satisfied for $(\mathcal{X}',\mathcal{D'})$ and $f$ extends to a morphism on all of $X'$. 
More precisely, let $I$ be the indeterminacy locus of $f$. For a closed point $P \in I$ we denote by $\mathcal{P}$ its Zariski closure in $\mathcal{X}$, which is an $\O_S$-scheme. The blowup $\sigma_{\mathcal{P}}\colon \mathcal{X}_{\mathcal{P}} \to \mathcal{X}$ along $\mathcal{P}$ satisfies $\mathcal{X}_{\mathcal{P}} \setminus \sigma_{\mathcal{P}}^{-1}(\mathcal{P}) \cong \mathcal{X} \setminus \mathcal{P}$ as $\O_S$-schemes. Since $I \subset D$, this implies that $\mathcal{X}_{\mathcal{P}} \setminus \sigma_{\mathcal{P}}^{-1}(\mathcal{D}) \cong \mathcal{X} \setminus \mathcal{D}$ and so the $\O_S$-integral points for these complements are the same: in particular, $(\mathcal{X}_{\mathcal{P}} \setminus \sigma_{\mathcal{P}}^{-1}(\mathcal{D}))(\O_S)$ is not thin if and only if $(\mathcal{X} \setminus \mathcal{D})(\O_S)$ is not thin. It is easy to check that all the hypotheses of \Cref{fondamentalissimo} are satisfied for the pair $(\mathcal{X}_{\mathcal{P}}, \sigma_{\mathcal{P}}^{-1}(\mathcal{D}))$. Iterating this procedure we reduce to the case where $f$ is a morphism.

Assume by contradiction that $(X\setminus D)(\O_S)$ is thin. By \Cref{reduction_finite} there exists finitely many absolutely irreducible normal projective $K$-varieties $Y_i$ equipped with finite $K$-maps $\pi_i \colon Y_i \to X$ of degree $>1$ such that $(X\setminus D)(\O_S) \setminus \bigcup_i(\pi_i(Y_i(K)))$ is contained in a curve $Z$. Under these assumptions the branch locus of the maps $\pi_i$ is a divisor due to Zariski's Theorem on the purity of the branch locus. 
For one cover $\pi_i \colon Y_i \to X$ we denote the fiber above the parameter $l\in \bP^1(\overline K)$ by $Y_i(l)\coloneqq (\lambda \circ \pi_i)^{-1}(l) = \pi_i^{-1}(C_l)$. Using that $Y_i$ is a normal surface, Bertini's Theorem \cite[Remark $10.9.2$]{Hart} ensures that the curves $Y_i(l)$ are smooth for all but finitely many $l$.
Our first goal is to prove the following:
\begin{claim}\label{claim1}
    There exists a thin set $T \subset \bP^1(K)$ such that if $l \notin T$ then only finitely many points of $C_l(\O_S)$ lift to the \emph{$\lambda$-ramified} covers (see \Cref{lambda_unramified}). 
\end{claim}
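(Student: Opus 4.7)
The plan is to treat each $\lambda$-ramified cover $\pi_i$ individually and exhibit a thin subset $T_i \subset \bP^1(K)$ such that for every $l \notin T_i$ only finitely many points of $C_l(\O_S)$ lift to $Y_i(K)$; the union $T := \bigcup_i T_i$ over the (finitely many) $\lambda$-ramified covers is then a thin set with the desired property. By \Cref{lambda_unramified}, being $\lambda$-ramified means that one of the three bulleted conditions in that definition fails for $\pi_i$, so the argument naturally splits into three cases.

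Suppose first that the absolute irreducibility condition fails, i.e.\ $\pi_i^{-1}(C_l)$ is not absolutely irreducible for infinitely many $l$. Then the generic fiber of $\lambda\circ\pi_i\colon Y_i\to\bP^1$ is not geometrically irreducible. Taking the Stein factorization $Y_i\to W_i\xrightarrow{\phi_i}\bP^1$, the finite map $\phi_i$ must have degree $>1$. Any $K$-rational point of $Y_i$ above $l\in\bP^1(K)$ maps to a $K$-rational point of $W_i$ above $l$, so $T_i := \phi_i(W_i(K))$ is a thin subset of $\bP^1(K)$ outside of which no point of $C_l(K)$ lifts to $Y_i(K)$.

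Suppose next that the total-ramification condition fails: there is an irreducible component $D_0$ of $D$ with $\lambda(D_0)$ non-constant such that $\pi_i$ has at least two distinct geometric preimages over the generic point of $D_0$. For all but finitely many $l$ the intersection $C_l \cap D_0$ is a single transversal point $p_l$ still having at least two $\bar K$-preimages under $\pi_i|_{Y_i(l)}$; combined with $\#(C_l \cap D) = 2$, this forces $(\pi_i|_{Y_i(l)})^{-1}(C_l \cap D)$ to contain at least three points on a normalization of $Y_i(l)$, so \Cref{trick_siegel}(\ref{three_points}) applies. Finally, if the branch-locus condition fails, then by Zariski's purity the branch locus has an irreducible component $B$ neither contained in $D$ nor equal to a fiber of $\lambda$; for all but finitely many $l$ the intersection $C_l \cap B$ contains a point outside $D$, which is a branch point of $\pi_i|_{Y_i(l)}$ outside $C_l\cap D$, and \Cref{trick_siegel}(\ref{ramified}) applies. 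In both of these cases $T_i$ can be taken to be finite.

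I expect the main technical point to be the first case: one must carefully check that the Stein factorization is defined over $K$ and that geometric reducibility of the generic fiber of $\lambda\circ\pi_i$ genuinely forces $\deg\phi_i > 1$ (equivalently, that $K(W_i)$ is a strictly larger than $K(\bP^1)$ inside $K(Y_i)$). The other two cases are essentially immediate consequences of \Cref{trick_siegel}, aided by the Bertini fact already recalled in the proof that $Y_i(l)$ is smooth for all but finitely many $l$, and the observation that intersections with fixed divisors behave uniformly in $l$ outside a finite bad locus.
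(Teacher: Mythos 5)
Your overall architecture coincides with the paper's: one thin set $T_i$ per $\lambda$-ramified cover, the geometrically-reducible-fiber case disposed of by pushing the parameter $l$ into the image of a degree $>1$ cover of $\bP^1$, and the remaining cases handled through \Cref{trick_siegel}. Your Stein factorization in the first case is a legitimate variant of the paper's appeal to \Cref{generically_reducible}, and the point you flag does close: since $Y_i$ is normal and we are in characteristic zero, the generic fiber of $\lambda\circ\pi_i$ is a regular curve over $K(\bP^1)$, so its geometric irreducible components are pairwise disjoint; hence geometric reducibility equals geometric disconnectedness, which forces $H^0$ of the generic fiber to be a proper finite extension of $K(\bP^1)$, i.e.\ $\deg\phi_i>1$ (for a non-normal $Y_i$ this could fail, so normality is where the work is). You should also order your cases explicitly: when treating the second and third cases, assume the irreducibility bullet of \Cref{lambda_unramified} holds (a cover violating it is already handled by your first case), so that $Y_i(l)$ is absolutely irreducible for all but finitely many $l$; \Cref{trick_siegel} requires $K$-irreducibility of the curve upstairs, and your three-point count in the second case is computed on a single irreducible preimage. (The phrase ``$C_l\cap D_0$ is a single transversal point'' need not hold when $D_0$ is a bisection of $\lambda$, but then the count of preimages of $C_l\cap D$ is only larger, so this is harmless.)

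The one step you assert without justification is the crux of the branch-locus case: that a transversal intersection point of $C_l$ with a branch curve $B\not\subset D$, not constant under $\lambda$, is a branch point of the restricted cover $\pi_i\colon Y_i(l)\to C_l$. This is not automatic; it is exactly the content of \Cref{restriction} (Lemma $3.10$ of \cite{coccia_ihp}), whose hypotheses you must also arrange: the point is smooth on both $B$ and $C_l$, the fiber $\pi_i^{-1}(p)$ avoids the (finite) singular locus of $Y_i$, and $\pi_i^{-1}(C_l)$ is absolutely irreducible. Without these the conclusion can fail: for the finite double cover $w^2=xy$ of the plane, branched along the two lines $xy=0$, the preimage of the line $x=y$, which crosses each branch line transversally at the origin, splits into two components each mapping isomorphically, so the restricted map is unramified there; the failure is caused precisely by the surface singularity of the cover above that point. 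In your situation the extra hypotheses do hold for all but finitely many $l$ (the singular locus of $Y_i$ is finite, and absolute irreducibility of $Y_i(l)$ follows from the case ordering above), so the gap is fillable, but the restriction lemma, or an equivalent local computation, must be invoked rather than taken for granted.
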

We will show that for any $\lambda$-ramified cover $\pi_i$ there exists a thin set $T_i$ such that if $l \notin T_i$ then only finitely many points of $C_l(\O_S)$ lift to $Y_i(K)$. \Cref{claim1} then follows taking $T \coloneqq \cup_i T_i$.
We first treat the case of a $\lambda$-ramified cover for which $Y_i(l)$ is reducible over $\overline{K}$ for all but finitely many $l$.
\begin{lemma}\label{generically_reducible}
	If the generic fiber of $\lambda \circ \pi_i$ is geometrically reducible, there exists a cover defined over $K$ of $K$-irreducible curves $\phi \colon \Cu \to \mathbb{P}^1$ such that the pullback of $Y_i$ over the base change $\Cu \times_{\mathbb{P}^1}X$ is reducible over $\overline K$.
\end{lemma}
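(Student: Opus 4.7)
The approach is to construct $\Cu$ from the Stein factorization of $\lambda\circ\pi_i$, or equivalently as the curve whose function field is the algebraic closure of $K(\mathbb{P}^1)$ inside $K(Y_i)$.

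Let $F = K(\mathbb{P}^1)$. Since $Y_i$ is absolutely irreducible and $\lambda \circ \pi_i$ is dominant, its generic fiber is a curve over $F$ with function field $K(Y_i)$; in characteristic zero this fiber is geometrically reducible precisely when $F$ is not algebraically closed in $K(Y_i)$. Let $\tilde F$ be the algebraic closure of $F$ inside $K(Y_i)$; by hypothesis $[\tilde F : F] \geq 2$, and this extension is finite because $K(Y_i)/F$ is finitely generated. Let $\Cu$ be the unique smooth projective $K$-curve with function field $\tilde F$, and let $\phi \colon \Cu \to \mathbb{P}^1$ be the $K$-morphism corresponding to the inclusion $F \hookrightarrow \tilde F$. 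Because $\tilde F$ is a field, $\Cu$ is $K$-irreducible, and because $[\tilde F : F] \geq 2$, the morphism $\phi$ has degree $\geq 2$, so it is a cover in the sense of \Cref{def_cover}.

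To see that $Y_i \times_{\mathbb{P}^1} \Cu$ is reducible over $\overline K$ it suffices to show it is already reducible over $K$, and for this I inspect its function ring at the generic point. Writing $\tilde F = F[x]/(g(x))$ for some irreducible separable $g \in F[x]$ of degree $[\tilde F:F]\geq 2$, and letting $\alpha \in K(Y_i)$ be the image of $x$, we compute
\[
K(Y_i) \otimes_F \tilde F \;\cong\; K(Y_i)[x]/(g(x)) \;=\; K(Y_i)[x]/\bigl((x-\alpha)\,h(x)\bigr),
\]
where $h(x)\in K(Y_i)[x]$ has positive degree. Since $g$ is separable, $(x-\alpha)$ and $h$ are coprime in $K(Y_i)[x]$, so the Chinese Remainder Theorem gives a nontrivial product decomposition $K(Y_i) \otimes_F \tilde F \cong K(Y_i) \times K(Y_i)[x]/(h(x))$. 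This exhibits at least two minimal primes in the generic stalk, and hence at least two $K$-irreducible components of $Y_i \times_{\mathbb{P}^1} \Cu$, which remain distinct after base change to $\overline{K}$.

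The only slightly delicate point is the first reduction — translating geometric reducibility of the generic fiber into the existence of a proper algebraic subextension $F \subsetneq \tilde F \subseteq K(Y_i)$. This is the standard geometric-integrality criterion and it uses characteristic zero (through separability of $\tilde F/F$) and the absolute irreducibility of $Y_i$ (so that $K(Y_i)/F$ is an honest field extension). Every other step in the argument is purely formal.
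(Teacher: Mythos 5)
Your argument is correct and complete: translating geometric reducibility of the generic fibre into $F\subsetneq\tilde F$ (valid in characteristic zero since the generic fibre is integral with function field $K(Y_i)$), taking $\Cu$ to be the smooth projective model of the relative algebraic closure $\tilde F$, and splitting $K(Y_i)\otimes_F\tilde F$ by CRT does yield at least two components of $Y_i\times_{\bP^1}\Cu$ over $K$, hence over $\overline K$. The paper itself gives no proof but cites \cite{coccia_ihp} (Lemma $3.9$), whose proof is exactly this Stein-factorization/relative-algebraic-closure argument, so your route coincides with the paper's.
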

\begin{proof}
	See \cite[Lemma $3.9$]{coccia_ihp}.
\end{proof}
Using this Lemma and the fact that $Y_i$ is geometrically irreducible, we have $\deg(\phi \colon \Cu \to \mathbb{P}^1)>1$. Then the components of $Y_i(l)$ may be defined over $K$ only for $l \in \phi(\Cu(K))$, which is a thin set in $\mathbb{P}^1(K)$. That is, for $l \notin \phi(\Cu(K))$, the fiber $Y_i(l)$ is irreducible over $K$ but reducible over $\overline{K}$, and this implies that $Y_i(l)$ has only finitely many rational points by \Cref{remark_serre}. We may then take $T_i \coloneqq \phi(\Cu(K))$.

Let us consider the case of a $\lambda$-ramified cover $\pi_i$ for which $Y_i(l)$ is geometrically irreducible for all but finitely many $l$. 
We initially define $T_i$ to be the set of $l \in \bP^1(\overline{K})$ such that $Y_i(l)$ is either reducible over $\overline{K}$ or passes through a singular point of $Y_i$. Notice that $T_i$ is finite since $Y_i$ is normal and so has a finite singular locus. We will progressively enlarge $T_i$ as needed.

Let us first assume that the branch locus of $\pi_i$ contains an irreducible curve $B$ which is not contained in $D$ and is not constant for $\lambda$. Then the generic $C_l$ intersects transversally $B$ in at least one point which is smooth for both $C_l$ and $B$ and is not contained in the divisor at infinity $D$. We add the finitely many exceptions in $T_i$. We now use the following:
\begin{lemma}\label{restriction}
	Let $\pi \colon Y \to X$ be a finite map from a normal surface $Y$ to a smooth surface $X$. Let $B$ be one irreducible component of the branch divisor. Let $C$ be a curve in $X$ intersecting $B$ transversally at some point $p$ which is smooth for both $C$ and $B$ and such that $\pi^{-1}(p)$ contains no singular point of $Y$. Suppose that $\pi^{-1}(C)$ is absolutely irreducible. Then the restriction of $\pi$ to $\pi^{-1}(C)$ defines a cover $\pi \colon \pi^{-1}(C) \to C$ ramified above $p$.
\end{lemma}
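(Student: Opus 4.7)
The plan is to locate a ramification point $q$ above $p$ and then extract ramification of the restricted cover $\pi^{-1}(C)\to C$ at $q$ via a local calculation on the smooth surface $Y$ near $q$.

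First I would apply Zariski-Nagata purity of the branch locus: since $B$ is a component of the branch divisor of $\pi$, there is an irreducible component $R_0$ of the ramification divisor of $\pi$ with $\pi(R_0)=B$ and ramification multiplicity $e_0\ge 2$ along $R_0$. Because $p\in B=\pi(R_0)$, the intersection $R_0\cap \pi^{-1}(p)$ is nonempty; choose $q$ in this intersection. By hypothesis $q$ is a smooth point of $Y$, so $\cO_{Y,q}$ is a two-dimensional regular (hence UFD) local ring.

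Second, I would perform the local ramification calculation at $q$. Using smoothness of $X$, $B$, and $C$ at $p$ together with the transversality of $B$ and $C$ at $p$, I choose regular parameters $u,v\in\cO_{X,p}$ so that $B=\{u=0\}$ and $C=\{v=0\}$ locally. Set $a:=\pi^{*}u$ and $b:=\pi^{*}v$ in $\cO_{Y,q}$, and factor $b=\prod_j b_j^{n_j}$ into distinct irreducibles; the analytic branches of $\pi^{-1}(C)$ at $q$ are the germs $\{b_j=0\}$. Writing $a=f_0^{e_0}\,g$, where $f_0$ is a local equation for the branch of $R_0$ through $q$, one obtains for each $j$
\[
I_q(a,b_j)\;=\;e_0\cdot I_q(f_0,b_j)+I_q(g,b_j)\;\ge\;e_0\cdot I_q(f_0,b_j).
\]
Since $C\not\subset B$ (they meet transversally at $p$), no $b_j$ is associate to $f_0$; combined with $f_0(q)=0=b_j(q)$ this forces $I_q(f_0,b_j)\ge 1$, whence $I_q(a,b_j)\ge e_0\ge 2$. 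Translating via the normalization of each branch, the ramification index of $\pi^{-1}(C)\to C$ at every preimage of $q$ is at least $e_0\ge 2$.

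The main subtlety I expect is that $\pi^{-1}(C)$ may fail to be smooth (or even analytically irreducible) at $q$, so the phrase "ramified at $q$" has to be interpreted branch-by-branch, or equivalently on the normalization; the smoothness of $Y$ at $q$ is exactly what makes the factorization of $b$ in $\cO_{Y,q}$ and the intersection-multiplicity estimate rigorous. The global hypothesis that $\pi^{-1}(C)$ is absolutely irreducible is used only to ensure that we genuinely have a cover of $C$ by a single irreducible curve, so that the local ramification detected at $q$ witnesses ramification of the cover $\pi\colon \pi^{-1}(C)\to C$ above $p$.
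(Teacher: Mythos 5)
The paper does not actually prove this lemma; it defers to \cite[Lemma~3.10]{coccia_ihp}, so there is no in-paper argument to compare against. Your local intersection-theoretic argument is sound in outline and reaches the right conclusion, but one step needs tightening: the deduction from $I_q(a,b_j)\ge e_0$ to ``ramification index $\ge e_0$ at every preimage of $q$'' is only valid if the $b_j$ are the irreducible factors of $b$ in the \emph{completed} local ring $\widehat{\cO}_{Y,q}$ (equivalently, the analytic branches). In the Zariski local ring $\cO_{Y,q}$, the absolute irreducibility of $\pi^{-1}(C)$ forces $b$ to have a single prime factor $b_1$, yet $\{b_1=0\}$ may be analytically reducible at $q$ (say a node); then $I_q(a,b_1)$ is the \emph{sum} of the orders of $a$ at the several points of the normalization lying over $q$, and a bound $\ge e_0=2$ is consistent with two unramified preimages, which would not yield a branch point over $p$. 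You flag exactly this branch-by-branch subtlety, and your computation does go through verbatim in $\widehat{\cO}_{Y,q}$, which is again a two-dimensional regular UFD: each analytic branch $h$ of $R_0$ at $q$ divides $f_0$, so $h^{e_0}\mid a$; no analytic branch of $\pi^{-1}(C)$ can coincide with one of $R_0$, since these are distinct curves (note the relevant non-containment is $B\not\subseteq C$, which follows from transversality at a point smooth on both, and guarantees no component of $\pi^{-1}(C)$ through $q$ equals $R_0$) and hence meet in finitely many points, so $I_q(h,b_j)$ is finite and $\ge 1$; and each analytic branch of $\pi^{-1}(C)$ at $q$ corresponds to exactly one point of the normalization over $q$, at which the ramification index equals the corresponding local multiplicity, which is $\ge e_0\ge 2$. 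With that adjustment (and recalling that, in characteristic zero, a component of the ramification divisor dominating $B$ with index $e_0\ge 2$ exists and meets $\pi^{-1}(p)$ at a point $q$ that is smooth on $Y$ by hypothesis), your proof is complete.
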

\begin{proof}
    See \cite[Lemma $3.10$]{coccia_ihp}.
\end{proof}
It follows that for $l \notin T_i$ the restriction $\pi_i \colon Y_i(l) \to C_l$ is a cover of absolutely irreducible curves ramified above an affine point of $C_l$. Applying \Cref{trick_siegel} we have that only finitely many points of $C_l(\O_S)$ can lift to $Y_i(K)$ for $l \notin T_i$.
If there is no such curve $B$ in the branch locus of $\pi_i$, then $\pi_i$ is \emph{not} totally ramified on at least one of the components of $D$ which are not constant under $\lambda$. It follows that, for all but finitely many $l$'s which we add in $T_i$, the restriction $\pi_i \colon Y_i(l) \to C_l$ satisfies $\#((\pi_i)^{-1}(D\cap C_l)) \ge 3$. Again by \Cref{trick_siegel} we have that only finitely many points of $C_l(\O_S)$ lift to $Y_i(K)$.

\Cref{claim1} follows taking $T=\cup_i T_i$ for $i$ varying along all $\lambda$-ramified covers.
If all covers were $\lambda$-ramified we would find a contradiction: indeed by \Cref{just_need_one} the set $\Lambda \coloneqq \{l \in \bP^1(K) \, \vert \, C_l(\O_S) \, \text{is infinite} \}$ is not thin and so we can always find $l \in \Lambda \setminus T$ such that only finitely many points of $C_l(\O_S)$ lift to $\lambda$-ramified covers. Hence from now on we assume that there exists at least one $\lambda$-unramified cover. We will prove the following:
\begin{claim}\label{claim2}
    Up to further enlarging $T$ with a thin set, if $l \in \Lambda \setminus T$ then \emph{all} the points in $C_l(\O_S)$ are lifted to $\lambda$-unramified covers, that is for any $p \in C_l(\O_S)$ there exists a $\lambda$-unramified cover $\pi_i \colon Y_i \to X$ and $y \in Y_i(K)$ such that $\pi_i(y)=p$.
\end{claim}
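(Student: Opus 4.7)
The plan is to combine \Cref{claim1} with the standing assumption that $(X\setminus D)(\O_S)\setminus \bigcup_i \pi_i(Y_i(K))\subseteq Z(K)$, and then bootstrap ``all but finitely many points lift'' into ``all points lift'' via \Cref{everything_lifts}.

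First I will further enlarge $T$ by a \emph{finite} set of $l\in \bP^1$, handling the finitely many $\lambda$-unramified covers $\pi_i$ one at a time. For each such $\pi_i$ I exclude those $l$ for which $Y_i(l)=\pi_i^{-1}(C_l)$ fails to be absolutely irreducible (a finite set by the definition of $\lambda$-unramified), those $l$ for which $C_l$ meets the finitely many $\lambda$-fibers contained in the branch locus of $\pi_i$, and those $l$ for which $Y_i(l)$ is singular (finite, by Bertini applied to the normal surface $Y_i$). I also exclude the finitely many $l$ with $C_l$ not smooth, with $\#(C_l\cap D)\neq 2$, or with $C_l$ contained in $Z$. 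All of these exclusions are finite.

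Now fix $l\in \Lambda\setminus T$. Since $l\in \Lambda$ the set $C_l(\O_S)$ is infinite; since $C_l\not\subset Z$ the set $C_l\cap Z$ is finite; and by hypothesis every point of $C_l(\O_S)\setminus Z(K)$ lifts to some $\pi_i$. \Cref{claim1} says that only finitely many points of $C_l(\O_S)$ can lift to $\lambda$-ramified covers, so all but finitely many points of $C_l(\O_S)$ are lifted by some $\lambda$-unramified cover.

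The final step is to upgrade this ``all but finitely many'' to ``all''. By the enlargement of $T$, for each $\lambda$-unramified $\pi_i$ the restriction $\pi_i\colon Y_i(l)\to C_l$ is a finite map of absolutely irreducible smooth projective curves which is totally ramified above the two-point $K$-divisor $D\cap C_l$ (by the very definition of $\lambda$-unramified) and étale everywhere else (since no other component of its branch locus meets $C_l$). Thus the setup of \Cref{everything_lifts} is exactly satisfied, with $\widetilde U=C_l$ and the finitely many $\widetilde V_i=Y_i(l)$ running over the $\lambda$-unramified covers. Applying \Cref{everything_lifts} to $C_l(\O_S)$, which is infinite and all but finitely many of whose points lift, I conclude that \emph{every} point of $C_l(\O_S)$ lifts to at least one $\lambda$-unramified cover, proving \Cref{claim2}. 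The only potential subtlety is the verification of the étale/ramification profile of $\pi_i|_{Y_i(l)}$; the enlargement of $T$ above is precisely calibrated to make this hold, so that the torus-orbit argument inside \Cref{everything_lifts} goes through.
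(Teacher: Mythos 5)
Your proposal is correct and follows essentially the same route as the paper: enlarge $T$ by the finitely many bad parameters (fibers in the branch locus, reducible or singular $Y_i(l)$, non-smooth $C_l$ or $\#(C_l\cap D)\neq 2$, fibers inside $Z$), use the standing assumption on $Z$ together with \Cref{claim1} to see that all but finitely many points of $C_l(\O_S)$ lift to $\lambda$-unramified covers, and then invoke \Cref{everything_lifts} for the restrictions $\pi_i\colon Y_i(l)\to C_l$ (totally ramified over $D\cap C_l$, étale elsewhere) to upgrade to all points. This matches the paper's argument step for step, including the role of $l\in\Lambda$ guaranteeing that $C_l(\O_S)$ is infinite so that \Cref{everything_lifts} applies.
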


We add to $T$ the finitely many $l$'s for which $C_l$ is contained in the branch locus of at least one $\lambda$-unramified cover. Then, up to further enlarging $T$ with finitely many parameters, we may assume that, for $l \notin T$ and for all $\lambda$-unramified covers $\pi_i$, both $C_l$ and $Y_i(l)$ are smooth and absolutely irreducible and the restriction $\pi_i \colon Y_i(l) \to C_l$ is étale outside $D \cap C_l$ and totally ramified on $D \cap C_l$. 
Recall that we are assuming that $(X\setminus D)(\O_S) \setminus \bigcup_i(\pi_i(Y_i(K)))$ is contained in a curve $Z$. We add to $T$ the finitely many values corresponding to irreducible components of $Z$ which are constant for $\lambda$. Thus for $l \notin T$ we have that all but finitely many points of $C_l(\O_S)$ lift to the covers $\pi_i$: however, since for $l \notin T$ only finitely many points of $C_l(\O_S)$ lift to $\lambda$-ramified covers, it follows that for $l \notin T$ all but finitely many of the points of $C_l(\O_S)$ lift to the $\lambda$-unramified covers.
Applying \Cref{everything_lifts} to the maps $\pi_i \colon Y_i(l) \to C_l$ for $\pi_i$ varying along all $\lambda$-unramified covers, we find that if $l \notin T$ and $C_l(\O_S)$ is infinite, then \emph{all} points of $C_l(\O_S)$ lift to the $\lambda$-unramified covers, thereby proving \Cref{claim2}.

Observe that $\lambda$-unramified covers are $\mu$-ramified. Indeed, if a cover was $\lambda$-unramified and $\mu$-unramified, then its branch locus would be contained in $D \cup E$, where $E$ is the union of the curves which are constant for both $\lambda$ and $\mu$. However, by hypothesis \ref{ipotesi_topologica}, this would force this cover to be trivial, contradicting the assumption.
We may then repeat the argument used for $\lambda$-ramified covers to find a thin set $T'$ such that for $m \notin T'$ only finitely many points in $C'_m(\O_S)$ lift to rational points of $\lambda$-unramified covers.

Using hypothesis \ref{ipotesi_non-thin} and \Cref{solo_finiti_liftano} we can pick an $m \notin T'$ such that $C'_m(\O_S)$ is infinite and there are infinitely many points $p \in C'_m(\O_S)$ such that $C_{\lambda(p)}(\O_S)$ is infinite and $\lambda(p) \notin T$. By the previous discussion all the points in $C_{\lambda(p)}(\O_S)$, and in particular the point $p$, would be lifted to (at least) one $\lambda$-unramified cover. But then there would be infinitely many points in $C'_m(\O_S)$ which are lifted to $\lambda$-unramified covers, which is a contradiction since $m \notin T'$.
\end{proof}

\subsection{Applications: reducible divisor at infinity}
We will now prove the potential IHP for the simply connected complements appearing in \Cref{caratterizzazione_complementi} and for which $D$ is reducible. We first need the following:
\begin{proposition}\label{fibrazione_non_thin}
    Let $(X,D)$ be a pair given by a smooth projective surface and a reduced effective divisor, both defined over a number field $K$. Let $S$ be a finite set of places of $K$ containing the archimedean places. Let $(\mathcal{X},\mathcal{D})$ be an $\O_S$-integral model for the pair, so that we have a well-defined notion of $S$-integral points with respect to $D$. Let $\lambda \colon X \dashrightarrow \bP^1$ be a $K$-pencil such that all but finitely many fibers are smooth rational curves intersecting $D$ only in one point. Let $C_{l} \coloneqq \overline{\lambda^{-1}(l)}^{{\rm Zar}}$. Assume that there exists a non-thin set $\Lambda \subset \bP^1(K)$ such that $C_l(\O_S)\neq \emptyset$ for $l \in \Lambda$. Then $(X\setminus D)(\O_S)$ is not thin.
\end{proposition}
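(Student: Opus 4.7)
The plan is to argue by contradiction, combining \Cref{reduction_finite}, \Cref{pell}, and a case analysis on generic fibers reminiscent of the one in the proof of \Cref{fondamentalissimo}. Assume toward a contradiction that $(X\setminus D)(\O_S)$ is thin. Then \Cref{reduction_finite} provides finitely many covers $\pi_i\colon Y_i\to X$ with $Y_i$ normal and geometrically irreducible and $\deg\pi_i\ge 2$, together with a proper closed subvariety $Z\subsetneq X$, such that $(X\setminus D)(\O_S)\subseteq Z(K)\cup\bigcup_i\pi_i(Y_i(K))$. The key observation driving the strategy is that, by the first bullet of \Cref{pell}, for every $l\in\Lambda$ with $C_l$ a smooth rational curve meeting $D$ in a single (necessarily $K$-rational, as a Galois-fixed) point, the set $C_l(\O_S)$ is not merely non-empty but actually \emph{non-thin} in $C_l(K)\cong\bP^1(K)$. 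The contradiction will come from exhibiting a single $l\in\Lambda$ for which the trace on $C_l(K)$ of $Z(K)\cup\bigcup_i\pi_i(Y_i(K))$ is nevertheless thin.

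To produce such $l$, for each cover $\pi_i$ I would construct a thin set $T_i\subset\bP^1(K)$ such that for $l\notin T_i$ the intersection $\pi_i(Y_i(K))\cap C_l(K)=\pi_i(Y_i(l)(K))$ --- with $Y_i(l)\coloneqq\pi_i^{-1}(C_l)$ --- is itself thin in $C_l(K)$. Mirroring the argument in the proof of \Cref{fondamentalissimo}, split into two cases. If the generic fiber of $\lambda\circ\pi_i$ is geometrically reducible, \Cref{generically_reducible} yields a $K$-cover $\phi_i\colon\Cu_i\to\bP^1$ of degree $>1$ of $K$-irreducible curves for which, for $l\notin\phi_i(\Cu_i(K))$, the fiber $Y_i(l)$ is $K$-irreducible but $\overline{K}$-reducible, hence has finitely many $K$-points by \Cref{remark_serre}; set $T_i\coloneqq\phi_i(\Cu_i(K))$ together with a finite exceptional set. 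If instead the generic fiber is geometrically irreducible, then for $l$ outside a finite set $Y_i(l)$ is absolutely irreducible and the restriction $\pi_i\colon Y_i(l)\to C_l$ is a cover of absolutely irreducible curves of degree $\deg\pi_i\ge 2$, so by the very definition of thinness $\pi_i(Y_i(l)(K))$ is a thin subset of $C_l(K)$, and $T_i$ can be taken finite.

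Let $T_Z\subset\bP^1(K)$ be the finite set of $l$ such that some irreducible component of $Z$ is contained in $C_l$; for $l\notin T_Z$ the intersection $Z\cap C_l$ is zero-dimensional, hence $Z(K)\cap C_l(K)$ is finite. Set $T\coloneqq T_Z\cup\bigcup_i T_i$, a thin subset of $\bP^1(K)$. Since $\Lambda$ is non-thin, pick $l\in\Lambda\setminus T$. Then $(Z(K)\cup\bigcup_i\pi_i(Y_i(K)))\cap C_l(K)$ is a finite union of thin subsets of $C_l(K)$, hence thin; on the other hand $C_l(\O_S)$ is non-thin in $C_l(K)$ by \Cref{pell} and is contained in $Z(K)\cup\bigcup_i\pi_i(Y_i(K))$ by the standing assumption --- a contradiction.

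The main technical point is the dichotomy on the generic fibers of $\lambda\circ\pi_i$, but this is already handled by \Cref{generically_reducible}; once that dichotomy is in place, the proof reduces, in essence, to applying Hilbert-irreducibility-style thinness on a single well-chosen fiber, using that a pencil with only one point at infinity trivializes each fiber as an $\mathbb{A}^1$-like curve on which \Cref{pell} promotes non-emptiness of integral points to non-thinness.
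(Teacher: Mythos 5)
Your proposal is correct and follows essentially the same route as the paper: the paper's (terse) proof likewise invokes \Cref{reduction_finite}, notes that $C_l(\O_S)$ is non-thin for $l\in\Lambda$ by the first case of \Cref{pell}, and then, exactly as you do, reuses the fiberwise dichotomy from the proof of \Cref{fondamentalissimo} (geometrically reducible generic fiber handled via \Cref{generically_reducible} and \Cref{remark_serre}, geometrically irreducible generic fiber giving a non-trivial cover of $C_l$) to choose $l\in\Lambda\setminus T$ and contradict non-thinness. Your write-up just makes explicit the thin set $T$ and the finite exceptional parameters (branch-locus fibers, components of $Z$) that the paper leaves implicit.
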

\begin{proof}
    We assume by contradiction that there exists finitely many absolutely irreducible normal projective $K$-varieties $Y_i$ equipped with finite $K$-maps $\pi_i \colon Y_i \to X$ of degree $>1$ such that $(X\setminus D)(\O_S) \setminus \bigcup_i(\pi_i(Y_i(K)))$ is contained in a curve. Observe that for $l \in \Lambda$ the set $C_l(\O_S)$ is not-thin by \Cref{pell}. Following the same strategy as in the proof of \Cref{fondamentalissimo}, we can find $l\in \Lambda \setminus T$ such that the restrictions $\pi_i \colon \pi_i^{-1}(C_l) \to C_l$ are non-trivial covers lifting all but finitely many of the points in $C_l(\O_S)$. This is impossible since $C_l(\O_S)$ is not thin, thereby proving the claim.
\end{proof} 
\begin{theorem}\label{riducibile}
    Let $\sigma\colon X\to \bP^2$ be a blow-up along at most $8$ points in general position of $\bP^2$. Let $\widehat{D}$ be a reduced effective anticanonical divisor of $X$ and let $D\coloneqq \sigma(\widehat{D})$. Assume that $D$ is reducible over $\overline{K}$. Then $(X\setminus \widehat{D})$ has the IHP potentially.  
\end{theorem}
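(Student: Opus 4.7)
By the classification of \Cref{caratterizzazione_complementi}, the potential IHP for $X \setminus \widehat{D}$ requires $X \setminus \widehat{D}$ to be simply connected, so (with $D$ reducible over $\overline{K}$) we are in one of two cases: \labelcref{caso_conica}, where $D = L + C$ with $L$ a line, $C$ a smooth conic, and $\sigma$ blows up at least one point $P \in C \setminus L$; or \labelcref{caso_tre_rette}, where $D = L_1 + L_2 + L_3$ is a union of three lines and $\sigma$ blows up at least two smooth points of $D$ lying on two distinct components. My plan is, in each case, to exhibit two conic fibrations $\lambda, \mu \colon X \to \bP^1$ whose product $f = \lambda \times \mu \colon X \to \bP^1 \times \bP^1$ has degree $2$, and then invoke \Cref{fondamentalissimo}; in one degenerate Hirzebruch subcase, a fallback to \Cref{fibrazione_non_thin} or an auxiliary construction will be required.

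The natural source of conic fibrations is the pencil of lines through a blown-up point: for each point $Q$ blown up by $\sigma$ and lying on $D$, the strict transforms on $X$ of the lines of $\bP^2$ through $Q$ form a conic pencil whose generic fiber meets $\widehat{D}$ in two points (a generic line meets the cubic $D$ in three points, one of which is absorbed by the exceptional divisor over $Q$). In case \labelcref{caso_tre_rette}, taking blown-up smooth points $P_i \in L_i$ for $i = 1, 2$ produces two such pencils $\lambda$ and $\mu$; their product $f$ has degree $2$, since two distinct lines, one through $P_1$ and one through $P_2$, meet in exactly one point. In case \labelcref{caso_conica}, when $\sigma$ blows up a second point $P_2$ in addition to $P$, the analogous construction using pencils through $P$ and through $P_2$ works. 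In the minimal subcase of \labelcref{caso_conica} where $\sigma$ blows up only $P$, so that $X \cong F_1$, the Hirzebruch surface has a unique $\bP^1$-ruling, and the second fibration $\mu$ must be built from a pencil of plane conics (for instance, conics through $P$ with a prescribed tangent direction at $P$ and through one additional base point), chosen so that generic $\mu$-fibers meet $\widehat{D}$ in two points.

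With $\lambda, \mu$ chosen, the hypotheses of \Cref{fondamentalissimo} are verified as follows. Hypothesis \ref{ipotesi_topologica} is immediate from the simple connectedness of $X \setminus \widehat{D}$, since the locus $E$ on which both $\lambda$ and $\mu$ are constant is empty (the two fibrations are independent). For \ref{ipotesi_non-thin}, after enlarging $K$ and $S$ to meet the assumptions of \Cref{punti_infiniti_giusto}, \Cref{ipotesi_semplificata} reduces the condition to producing a non-thin subset $M \subset \bP^1(K)$ such that each $C'_m$ carries at least one $S$-integral point; such an $M$ arises from a section or bisection of $\mu$ contained in $\widehat{D}$ (for instance $\widehat{L}_1$ in case \labelcref{caso_tre_rette} or $\widehat{L}$ in case \labelcref{caso_conica}), whose integral points form a non-thin subset of $\bP^1(K)$ by \Cref{pell}. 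Hypothesis \ref{ipotesi_ramificazione} is then checked by a direct computation describing how the ramification divisor of $f$ sits with respect to the components of $\widehat{D}$ that are non-constant under $\mu$.

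The main obstacle I anticipate is the minimal Hirzebruch subcase of \labelcref{caso_conica} (only the point $P$ blown up), where a second independent conic fibration whose fibers meet $\widehat{D}$ in the right number of points is not supplied by any pencil of lines and must be constructed from a pencil of conics, while simultaneously arranging that \ref{ipotesi_ramificazione} holds; here one may also need to enlarge the divisor at infinity or fall back on \Cref{fibrazione_non_thin} applied to a fibration with a single point at infinity after a carefully chosen compactification. By contrast, the remaining cases are essentially combinatorial verifications once the two line-pencils are in place.
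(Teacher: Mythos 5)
There is a genuine gap, and it sits at the heart of your construction. In both case \labelcref{caso_tre_rette} and the two-point subcase of \labelcref{caso_conica} you propose to take $\lambda$ and $\mu$ to be the pencils of lines through two distinct blown-up points and to feed $f=\lambda\times\mu$ into \Cref{fondamentalissimo}. But a generic line through $P_1$ and a generic line through $P_2$ meet in exactly \emph{one} point, so $f$ has degree $1$ (it is the classical birational map $\bP^2\dashrightarrow\bP^1\times\bP^1$), not degree $2$ as you claim and as \Cref{fondamentalissimo} requires. This hypothesis is not cosmetic: the whole double-fibration mechanism rests on $\lambda_{|C'_m}$ being a degree-$2$ cover of $\bP^1$, which is what makes \Cref{solo_finiti_liftano} and \Cref{just_need_one} work via fiber products and \Cref{trick_siegel}. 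If $\lambda_{|C'_m}$ is an isomorphism, then $\lambda(C'_m(\O_S))$ is the isomorphic image of a Pell-type (hence thin) set, so it can be entirely contained in a thin set $T$ — indeed all of $C'_m(\O_S)$ lifts to the pullback of a suitable double cover of $\bP^1$ — and the non-thinness of $\Lambda$ can no longer be extracted. The paper avoids this precisely by pairing a \emph{line} pencil with a \emph{conic} pencil: in the three-lines case, $\mu$ is the pencil of lines through $L_1\cap L_2$ and $\lambda$ is the pencil of conics through $P_1,P_2,Q_{1,3},Q_{2,3}$; in the line-plus-conic case (with $L$ and $C$ transversal), $\mu$ is the pencil of lines through $P$ and $\lambda$ is the pencil of conics bitangent to $C$ at $Q_1,Q_2$ — which, incidentally, also resolves the minimal Hirzebruch subcase that you flag as the main obstacle but leave unsolved (your suggested pencil of conics through $P$ with extra base conditions is not checked to meet $\widehat D$ in two points, and it does not obviously do so).

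A second, smaller omission: the non-normal-crossings configurations allowed by \Cref{caratterizzazione_complementi} — $L$ tangent to $C$, and three \emph{concurrent} lines — are not addressed by your scheme, and the degree-$1$ problem above persists there as well. The paper treats them by a different route, namely the single-fibration \Cref{fibrazione_non_thin}: for the tangent case one uses the pencil $aL^2+bC=0$, whose generic member meets $\widehat D$ in a single point and is parametrized non-thinly by the section given by the strict transform of the line through $P$ and the tangency point; for concurrent lines one uses the pencil of lines through the common point, parametrized by the strict transform of the line through $P_1$ and $P_2$. Finally, your verification of \ref{ipotesi_ramificazione} is only asserted (``a direct computation''), whereas with the correct pencils this is a concrete geometric check (e.g.\ the unique $\lambda$-fiber tangent to all lines of $\mu$ is a component of $D$ that is horizontal for $\mu$). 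As it stands, the proposal identifies the right framework but the fibrations chosen do not satisfy its hypotheses, so the argument does not go through without replacing them as above.
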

\begin{proof}
    Using \Cref{caratterizzazione_complementi} and \Cref{HP_model_dependence}, after contracting some exceptional divisors and enlarging $K$ and $S$, we may reduce to the following cases:
    \begin{enumerate}
        \item \label{conica_retta} $D$ is the sum of a smooth conic $C$ and a line $L$ and $\sigma$ is the blowup of $\bP^2$ at a point $P\in C\setminus L$;
        \item \label{tre_rette} $D$ is the sum of three lines $L_1,L_2,L_3$ and $\sigma$ is the blowup of $\bP^2$ in two points $P_1$ and $P_2$ lying on $L_1$ and $L_2$ respectively and distinct from the points of intersection of the three lines.
    \end{enumerate} 
    We first deal with the cases where $D$ has a singularity which is not normal crossings, which can be treated with \Cref{fibrazione_non_thin}.
    \begin{itemize}
    \item Assume we are in Case (\labelcref{conica_retta}) and $L$ is tangent to $C$ in $Q$. Let $L(x,y,z)=0$ and $C(x,y,z)=0$ be equations for $L$ and $C$, and consider the one-dimensional linear system of conics $G_l\coloneqq (aL^2+bC=0)$ for $l=[a:b] \in \bP^1$. This is the pencil of plane conics which intersect $C$ only in $Q$ with multiplicity $4$, and it defines a morphism $\lambda\colon X \setminus\{Q\}\to \bP^1$. Observe that all but finitely many $G_l$ intersect $\widehat{D}$ only in one point. The strict transform $F$ of the line through $P$ and $Q$ intersects $\widehat{D}$ only in $Q$, so enlarging $K$ and $S$ we may assume that $F(\O_S)$ is not thin (by \Cref{pell}). Observe that $F$ is a section of $\lambda$ and so $\Lambda \coloneqq \lambda(F(\O_S)$ is not thin. Applying \Cref{fibrazione_non_thin} we prove our claim.

    \item Assume we are in Case (\labelcref{tre_rette}) and the three lines meet in a common point $Q$. Consider the pencil of lines $G_l$ through $Q$, which defines a morphism $\lambda\colon X \setminus\{Q\}\to \bP^1$. The generic $G_l$ intersects $\widehat{D}$ only in $Q$. The strict transform $F$ of the line through $P_1$ and $P_2$ intersects $\widehat{D}$ in a single point, so that $F(\O_S)$ is potentially non-thin. We can take $\Lambda \coloneqq \lambda(F(\O_S))$ and conclude applying \Cref{fibrazione_non_thin}.
    \end{itemize}
    We will deal with the remaining cases using \Cref{fondamentalissimo}.
    First of all, we enlarge $K$ and $S$ so that the hypotheses of \Cref{punti_infiniti_giusto} are met. In particular, we will check the simplified form of \ref{ipotesi_non-thin} appearing in \Cref{ipotesi_semplificata}.
    \begin{itemize}
        \item  Assume we are in Case (\labelcref{conica_retta}) and $L$ and $C$ meet in two distinct points $Q_1$ and $Q_2$. We want to find two pencil of conics satisfying the hypotheses of \Cref{fondamentalissimo}. The first is the fibration $\mu \colon X \to \bP^1$ given by the strict transforms of the lines through $P$. The second pencil is given by the conics through $Q_1$ and $Q_2$ and with the same tangent as $C$ in both $Q_1$ and $Q_2$. Taking the strict transforms for $\sigma$, this family gives a map $\lambda \colon X\setminus \{Q_1,Q_2\} \to \bP^1$. The only fiber of $\lambda$ which is tangent to all lines through $P$ is the double line $L$. However $C$ is a component of $D$ which is horizontal for $\mu$, so that \ref{ipotesi_ramificazione} is satisfied.
        The exceptional divisor $E$ above $P$ intersects the divisor at infinity $\widehat{D}$ precisely in one point so, up to enlarging $K$ and $S$, $E(\O_S)$ is non-thin by \Cref{pell}. The exceptional divisor $E$ is a section of $\mu$ and so \ref{ipotesi_non-thin} is satisfied picking $M \coloneqq \mu(E(\O_S))$.
        There is no curve which is constant for both $\lambda$ and $\mu$, so also \ref{ipotesi_topologica} is satisfied. We may then apply \Cref{fondamentalissimo} and prove the IHP.
        
        \item Assume we are in Case (\labelcref{tre_rette}) and the three lines are in general position. Let $Q_{i,j} \coloneqq L_i \cap L_j$. The pencil of lines through $Q_{1,2}$ gives a map $\mu \colon X \setminus \{ Q_{1,2} \} \to \bP^1$. The strict transforms under $\sigma$ of the conics through $P_1,P_2, Q_{1,3}, Q_{2,3}$ give a map $\lambda \colon X\setminus \{ Q_{1,3}, Q_{2,3} \} \to \bP^1$. The only fiber of $\lambda$ which is tangent to all lines through $P$ is $L_1 \cup L_2$. However $L_3$ is a component of $D$ which is horizontal for $\mu$, so that \ref{ipotesi_ramificazione} is satisfied.
        The strict transform $F$ of the line through $P_1$ and $P_2$ meets $\widehat{D}$ in one point so, after enlarging $K$ and $S$, we may assume that $F(\O_S)$ is not thin.
        The curve $F$ is a section of $\mu$ and so \ref{ipotesi_non-thin} holds taking $M \coloneqq \mu(F(\O_S))$.  
        There is no curve which is constant for both $\lambda$ and $\mu$, so also \ref{ipotesi_topologica} is satisfied. 
        We may then apply \Cref{fondamentalissimo} and prove the IHP.
    \end{itemize}
\end{proof}
	\section{Irreducible divisor at infinity}\label{section5}

This section is devoted to the proof of \Cref{irreducible}, which allows us to complete the proof of \Cref{teorema_principale}:
\begin{proof}[Proof of \Cref{teorema_principale}]
    Due to \Cref{product_not_simply_connected} and \Cref{HP_model_dependence}, after enlarging $K$ and $S$, we may assume that there exists a blow-up map $\sigma\colon X\to \bP^2$ along at most $8$ points in general position of $\bP^2$ such that $\sigma$ is defined over $K$. If $\sigma(D)$ is reducible then the claim follows from \Cref{riducibile}. Otherwise, due to \Cref{caratterizzazione_complementi} and \Cref{HP_model_dependence}, after contracting some exceptional divisors and enlarging $K$ and $S$, we reduce to the case of \Cref{irreducible}.
\end{proof}
For the reader's convenience we collect some notation that we will use:
\begin{itemize}
    \item $D$ in an absolutely irreducible cubic $K$-curve in $\bP^2$ defined by an homogeneous cubic equation $F \in K[x,y,z]$,
    \item $P$ is a smooth $K$-point of $D$,
    \item $\sigma \colon X \to \bP^2$ is the blow-up at $P$,
    \item $\widehat{D}$ is the strict transform of $D$ under $\sigma$,
    \item $E$ is the exceptional divisor above $P$,
    \item $\lambda \colon X\to \bP^1$ is the pencil of strict transforms of lines through $P$,
    \item $\s \subset \bP^3$ is the cubic surface $w^3=F(x,y,z)$,
    \item $\rho \colon \s \to \bP^2$ is the projection from the point $[x:y:z:w]=[0:0:0:1]$ to the plane $H \coloneqq (w=0)$, that is, the map $[x:y:z:w] \mapsto [x:y:z]$.
\end{itemize}
\Cref{irreducible} does not follow from \Cref{fondamentalissimo} since $\lambda$ is the only fibration in conics with two points at infinity that we have on $X \setminus \widehat{D}$. Still the same strategy of \Cref{fondamentalissimo} applies: the idea is to employ the one-dimensional family of plane conics that Beukers used in \cite{beukers} to prove potential density of integral points for the complement of a cubic curve in $\bP^2$, for instance $\bP^2 \setminus D$. These conics are associated to a flex line $L$ of $D$, and they intersect $D$ in two points and $L$ in one point. Under certain hypotheses that are always met after enlarging $K$ and $S$, Beukers shows that there is an infinite subset of $p \in (L\setminus D)(\O_S)$ such that the Beukers' conic passing through $p$ is defined over $K$ and satisfies the hypotheses of \Cref{pell}. One then finds infinitely many Beukers' conics containing infinitely many $S$-integral points, thereby showing that $(\bP^2\setminus D)(\O_S)$ is Zariski dense.

There are two main obstructions to applying directly the argument of \Cref{fondamentalissimo}. The first issue is that the Beukers' conics do not form a linear system and so \Cref{generically_reducible} does not hold: indeed there are irreducible covers of $X$ for which the preimage of all but finitely many Beukers' conics is reducible \emph{over $K$}, so that these covers could lift all integral points on Beukers' conics. The second issue is that Beukers' construction of conics with infinitely many integral points gives only a \emph{thin} set of parameters, while we need a non-thin set.

Both obstructions are solved reinterpreting Beukers' conics in terms of certain conic fibrations on $\s$. More specifically, we will show that to any flex line of $D$ there correspond three coplanar lines contained in $\s$. For any line $\ell \subset \s$ we can consider the conic fibration $\mu_{\ell} \colon \s \dashrightarrow \bP^1$ associated to the linear system of planes containing the line. Then the conics constructed by Beukers are precisely the images under $\rho$ of the fibers of $\mu_{\ell}$.

The first issue is solved using that $\mu_{\ell}$ is a linear system: this allows to prove that, for any $\lambda$-unramified cover, the preimages of the Beukers' conics are reducible over $K$ only for a thin set of parameters. The second issue is solved by parametrizing the Beukers' conics associated to $\ell$ with another line $\ell' \subset \s$ which is skew with $\ell$ (after having suitably enlarged $K$ and $S$). In conclusion, one can apply the strategy of \Cref{fondamentalissimo} and prove \Cref{irreducible}. Let us now get into the details of the argument.

\subsection{Geometric preliminaries}
We first collect some facts on the geometry of irreducible plane cubic curves.
\begin{definition}
    A \emph{flex line} of a plane cubic is a line intersecting the cubic in a single point. A \emph{flex point} is a point of the cubic contained in a flex line.
\end{definition}
\begin{remark}
    If a flex $Q$ is a smooth point then the tangent line is the only flex line passing through $Q$. If a point $Q$ is singular then it is a flex, since its principal tangents are flex lines.
\end{remark}
Up to taking a linear transformation (defined over $\overline{K}$) we may assume that $D$ is given by a cubic equation of the form
\begin{equation*}
	F \, \colon \, x^3+zG(x,y,z)=0
\end{equation*}
where $G$ is a homogeneous degree $2$ polynomial. We will denote points in projective coordinates with the notation $[x:y:z]$.
\begin{lemma}\label{existence_of_flex}
    All irreducible plane cubics have at least two flex points over $\overline{K}$ and at least one of them is smooth.
\end{lemma}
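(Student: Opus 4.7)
My plan is to split into the usual trichotomy for irreducible plane cubics over $\overline{K}$: smooth, nodal, and cuspidal, treating each case separately. Recall that by irreducibility any singularity of $D$ is unique and is either a node or a cusp, and the remark preceding the lemma has already established that a singular point of $D$ is automatically a flex (the principal tangents at it meet $D$ only at that point, since by Bézout a tangent to a double point of a cubic hits the cubic with total multiplicity three already at the singularity).

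In the smooth case I will invoke the Hessian $H(F)$. It is not identically zero: otherwise every point of $D$ would be a flex, contradicting the finiteness of flexes on a smooth cubic (a smooth irreducible plane curve has only finitely many inflectional tangents). By Bézout, $V(F)\cap V(H(F))$ therefore consists of nine points counted with multiplicity, and these are precisely the flex points of $D$; since $D$ is smooth they are all smooth flexes, giving far more than the two required.

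For the singular cases the singular point $Q$ is already a flex, so it remains only to exhibit a single smooth flex. I will pass to the classical projective normal forms over $\overline{K}$: an irreducible nodal cubic is projectively equivalent to $y^{2}z=x^{3}+x^{2}z$, while an irreducible cuspidal cubic is projectively equivalent to $y^{2}z=x^{3}$. In both models one checks directly that $[0{:}1{:}0]$ is a smooth point of $D$ and that substituting $z=0$ in the defining equation yields $x^{3}=0$, so the line $z=0$ meets $D$ only at $[0{:}1{:}0]$, with multiplicity three. Hence $[0{:}1{:}0]$ is a smooth flex, and together with the singular flex $Q$ this yields the two required flex points with at least one smooth.

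The step I expect to be most delicate is the cuspidal case: the Hessian intersection is almost entirely absorbed by the cusp (it contributes intersection multiplicity eight to $V(F)\cap V(H(F))$), leaving essentially only one smooth flex on $D$, so a lossy multiplicity count is not sufficient and one must exhibit the smooth flex explicitly; this is exactly what the normal form accomplishes. A minor issue to mention in the smooth case is the non-vanishing of the Hessian, but as noted this follows immediately from the finiteness of flexes.
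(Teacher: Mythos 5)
Your proof is correct and follows essentially the same route as the paper: the smooth case is handled via the nine flexes (the paper simply cites this fact, you derive it from the Hessian), and the singular case uses exactly the same normal forms $x^3+z(x^2-y^2)=0$ and $x^3-zy^2=0$, from which you make explicit the smooth flex at $[0:1:0]$ that the paper leaves as "the claim easily follows." No gaps beyond the level of detail the paper itself assumes.
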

\begin{proof}
    If $F=0$ is smooth then it has $9$ smooth flexes.
    If $F=0$ is singular, there exists a linear transformation (defined over $\overline{K}$) that sends it either to $x^3+z(x^2-y^2)=0$ or $x^3-zy^2=0$, and the claim easily follows. 
\end{proof}
Notice that the surface $\s$ is absolutely irreducible and is smooth when $D$ is smooth, and singular only at the points of $H$ corresponding to singular points of $D$. Moreover, notice that the map $\rho$ is a cover of degree $3$ which is étale over $\bP^2 \setminus D$ and totally ramified along $D$. 
\begin{remark}\label{corrispondenza_punti_infinito}
    In the following we will frequently identify $H$ with $\bP^2$ and $H \cap \s$ with $D$. In particular, for a curve $C \subset \s$, the points of $\rho(C) \cap D$ are in a one-to-one correspondence with $C \cap H$, so we will frequently identify them in the following.
\end{remark}
\begin{lemma}
    If $\ell$ is a line contained in $\s$ then $\rho(\ell)$ is a flex line of $D$.
\end{lemma}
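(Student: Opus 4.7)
The plan is to parametrize the line $\ell$ by linear forms and then use the defining equation of $\s$ to show that the restriction of $F$ to $\rho(\ell)$ is a perfect cube, which forces $\rho(\ell) \cap D$ to be a single point with multiplicity three.

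First I would write $\ell \subset \bP^3$ as the image of a linear embedding $\bP^1 \hookrightarrow \bP^3$, say $[t_0:t_1] \mapsto [\alpha(t) : \beta(t) : \gamma(t) : \delta(t)]$ where $\alpha,\beta,\gamma,\delta$ are linear forms in $(t_0,t_1)$. Since $\ell \subset \s$, the identity $w^3 = F(x,y,z)$ must hold identically on $\ell$, that is,
\[
\delta(t)^3 \;=\; F\bigl(\alpha(t),\beta(t),\gamma(t)\bigr)
\]
as cubic forms in $(t_0,t_1)$.

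Next I would argue that $L := \rho(\ell)$ is actually a line in $\bP^2$. The map $\rho$ is the linear projection from $[0{:}0{:}0{:}1]$, and this center does not lie on $\s$; hence $\ell$ does not pass through it, so the linear map $(t_0,t_1) \mapsto (\alpha,\beta,\gamma)$ has rank $2$, and its image in $\bP^2$ is a line parametrized by $[\alpha(t):\beta(t):\gamma(t)]$.

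Finally, I would compute $L \cap D$. A point of $L$ lies in $D$ exactly when $F(\alpha(t),\beta(t),\gamma(t)) = 0$; by the displayed identity this equation is $\delta(t)^3 = 0$. Since $\delta$ is a linear form in $(t_0,t_1)$, this vanishes at a single value of $t$ with multiplicity $3$. Thus $L$ meets $D$ in a unique point (counted with multiplicity three), which by definition makes $L$ a flex line of $D$.

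There is no real obstacle here; the only small subtlety is verifying that $\rho$ does not collapse $\ell$ to a point, which is taken care of by the fact that $[0{:}0{:}0{:}1]\notin\s$. The argument is essentially immediate once one writes $F$ evaluated along $\ell$ as $\delta^3$.
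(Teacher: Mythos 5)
Your argument is essentially the paper's: both reduce the claim to the facts that the projection center $[0{:}0{:}0{:}1]$ does not lie on $\s$ (so $\rho(\ell)$ is a line) and that the intersection of $\rho(\ell)$ with $D$ is governed by the vanishing of $w$ along $\ell$; you just phrase the second point computationally via the identity $\delta^3=F(\alpha,\beta,\gamma)$, which also exhibits the multiplicity-three contact.

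One small point is left implicit: your final step assumes $\delta\not\equiv 0$, i.e.\ that $\ell$ is not contained in the plane $H=(w=0)$. If $\delta$ were the zero form, the identity would give $F(\alpha,\beta,\gamma)\equiv 0$, and the conclusion ``vanishes at a single value of $t$'' would fail. This case is excluded because $\s\cap H$ is a copy of the absolutely irreducible cubic $D$, which contains no line --- exactly the observation the paper makes explicitly (``the plane $H$ does not contain lines of $\s$''). Adding that one sentence makes your proof complete.
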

\begin{proof}
    The map $\rho$ is the restriction to $\s$ of the projection $\bP^3 \dashrightarrow \bP^2$ from $[0:0:0:1]$ onto the plane $H$, which sends lines not containing $[0:0:0:1]$ to lines in $\bP^2$. Since $\s$ does not contain $[0:0:0:1]$, all lines in $\s$ are sent to lines in $H \cong \bP^2$ and so $\rho(\ell)$ is a line. 
    The plane $H$ does not contain lines of $\s$, so $\ell \cap H$ is a single point. By \Cref{corrispondenza_punti_infinito} also $\rho(\ell) \cap D$ is a single point and this proves that $\rho(\ell)$ is a flex line of $D$.
\end{proof}
In the opposite direction, we have the following:
\begin{lemma}\label{preimage_lines}
    Let $L \subset \bP^2$ be a flex line of $D$. Then $\rho^{-1}(L)$ is given by three coplanar concurrent lines and their common point lies on $H$.
\end{lemma}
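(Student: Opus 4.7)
The plan is to identify $\rho^{-1}(L)$ explicitly as a plane section of $\s$ and then use that $L$ is a flex line to factor its defining equation completely.

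First, I would observe that if $L \subset H \cong \bP^2$ is cut out by a linear form $\ell(x,y,z) = 0$, then $\rho^{-1}(L) = \s \cap \pi$, where $\pi \subset \bP^3$ is the plane defined by the same linear equation $\ell(x,y,z) = 0$ (now viewed in $\bP^3$). Since $\ell$ does not involve the coordinate $w$, the plane $\pi$ automatically contains the point $[0:0:0:1]$, which is the center of projection for $\rho$. This point will turn out to be the common intersection point of the three lines, and it visibly lies on $H$ since $w = 0$ there --- wait, actually $[0:0:0:1]$ has $w = 1 \ne 0$, so I need to be more careful: the three lines will pass through a point of $H \cap \pi$, not through $[0:0:0:1]$ itself. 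Let me continue.

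Next, since $L$ is a flex line meeting $D$ in a single point $Q$ with multiplicity three, I can choose a linear parametrization $[s:t] \mapsto L$ under which $F$ pulled back to $L$ takes the form $c \cdot g(s,t)^3$ for some linear form $g$ and some constant $c$, with $g$ vanishing exactly at $Q$. Using $[s:t:w]$ as homogeneous coordinates on $\pi$, the intersection $\s \cap \pi$ is then the cubic curve
\begin{equation*}
    w^3 = c \, g(s,t)^3
\end{equation*}
in $\pi$. Over $\overline{K}$, this factors as
\begin{equation*}
    \prod_{i=0}^{2} \bigl( w - \zeta^{i} c^{1/3} g(s,t) \bigr) = 0,
\end{equation*}
where $\zeta$ is a primitive cube root of unity, exhibiting $\rho^{-1}(L)$ as a union of three lines inside the plane $\pi$, hence coplanar.

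Finally, each of these three linear factors vanishes simultaneously precisely on the locus $\{g(s,t) = 0\} \cap \{w = 0\}$, which is a single point of $\pi$. This common point has $w = 0$, so it lies on $H$; and since $g$ vanishes at the flex point $Q$, under the identification of \Cref{corrispondenza_punti_infinito} this common point is $Q$ itself. This gives the desired conclusion that the three lines are coplanar, concurrent, and their common point lies on $H$. The only subtlety I anticipate is the bookkeeping of coordinates on $\pi$ and confirming that $g$ really does cut out $Q$ on $L$, but this follows directly from unpacking the definition of a flex line.
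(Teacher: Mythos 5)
Your proof is correct and follows essentially the same route as the paper: both use the flex condition to write the restriction of $F$ to $L$ as a perfect cube of a linear form (the paper writes $F = L\cdot Q + M^3$ globally, you restrict via a parametrization of $L$), then factor $w^3 = (\text{cube})$ into three linear factors involving cube roots of unity, all lying in the plane over $L$ and concurrent at the point where $w$ and the linear form vanish, which lies on $H$. Your mid-proof self-correction about $[0:0:0:1]$ resolves correctly, so no issue remains.
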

\begin{proof}
    If $L(x,y,z)=0$ is a linear equation for the flex $L$, we have that $F=L\cdot Q+M^3$, where $Q$ is a homogeneous degree $2$ polynomial and $M$ is a linear form distinct from $L$. Then the points of $\rho^{-1}(L)$ satisfy $w^3=M^3$, and so the preimage of $L$ is given by the three lines $L=w-\epsilon^i M=0$ for $i=0,1,2$, where $\epsilon$ is a primitive third root of unity. These lines all lie in the plane $L(x,y,z)=0$ and meet in the point $w=L=M=0$.
\end{proof}
\begin{remark}\label{third_roots}
    It follows from the proof of \Cref{preimage_lines} that, if $K$ contains the third roots of unity, then for any $K$-rational flex line $L$ of $D$ also the three lines in $\rho^{-1}(L)$ are defined over $K$. 
\end{remark}

Any line $\ell$ in $\s$ defines a conic fibration $\mu_{\ell} \colon \s \dashrightarrow \bP^1$ associated to the linear system of planes containing $\ell$. The map $\mu_{\ell}$ is actually defined everywhere except on the point $\ell \cap H$ in case this is a singular point of $H$. We let $\widetilde{C}_u \coloneqq \overline{\mu_{\ell}^{-1}(u)}^{{\rm Zar}}$ for $u \in \bP^1$, which is a smooth conic for all but finitely many $u$. We define $C_u \coloneqq \rho(\widetilde{C}_u)$, which is again a smooth conic for all but finitely many $u \in \bP^1$.
\begin{remark}
    When $D$ has equation $x^3+zG=0$, the conics $C_u$ associated to the line $\ell \colon z=w-x=0$ are precisely the conics constructed by Beukers in \cite{beukers}. 
\end{remark}
\begin{lemma}
    The conic $\widetilde{C}_u$ meets $H$ in two points for all but finitely many $u$. If $\rho(\ell)\cap D$ is a smooth point of $D$ then the points of $\widetilde{C}_u \cap H$ vary algebraically in $u$. If $\rho(\ell)\cap D$ is a singular point of $D$ then $\widetilde{C}_u \cap H$ is given by $\ell \cap H$ and a point varying algebraically in $u$.
\end{lemma}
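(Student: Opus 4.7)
My plan is to reduce $\widetilde{C}_u \cap H$ to an explicit residual divisor on a variable line in $H$ and then analyze it case by case. For a generic parameter $u$, let $\Pi_u \supset \ell$ be the plane cutting out $\widetilde{C}_u$, so that $\Pi_u \cap \s = \ell + \widetilde{C}_u$ as effective divisors in $\Pi_u$, and set $M_u \coloneqq \Pi_u \cap H$. Since $D$ is an irreducible cubic, $D$ contains no line, so $\ell \not\subset H \cap \s = D$, hence $\ell \not\subset H$; this forces $H$ not to lie in the pencil of planes through $\ell$, so $M_u$ is a line in $H$ for every $u$. As $\widetilde{C}_u$ is a conic in $\Pi_u$ meeting the line $M_u$, B\'ezout gives that $\widetilde{C}_u \cap H = \widetilde{C}_u \cap M_u$ has length two, and the two points are distinct outside the codimension-one locus of $u$ at which $\widetilde{C}_u$ is tangent to $M_u$, which establishes the first assertion.

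To pin down these two points I would compute $\Pi_u \cap \s \cap H$ in two different ways. On the one hand, $\s \cap H = D$ yields $\Pi_u \cap \s \cap H = M_u \cap D$, a degree-three divisor on $M_u$. On the other, $\Pi_u \cap \s = \ell + \widetilde{C}_u$ together with $\ell \cap M_u = \{p\}$ (with multiplicity one, because $\ell \not\subset H$), where $p \coloneqq \ell \cap H$, gives the divisor identity $M_u \cap D = p + (\widetilde{C}_u \cap M_u)$ on $M_u$. Hence $\widetilde{C}_u \cap H = (M_u \cap D) - p$, reducing the problem to describing how the pencil of lines $\{M_u\}_u$ through $p$ cuts the cubic $D$.

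For the case analysis, suppose first that $p$ is a smooth point of $D$. Then the flex line $L \coloneqq \rho(\ell)$ is the tangent to $D$ at $p$, and for all but finitely many $u$ the line $M_u$ is distinct from $L$ and not tangent to $D$ at any further point; such $M_u$ meet $D$ transversely at $p$ and at two other points $q_u, q'_u$, both varying algebraically with $u$, so $\widetilde{C}_u \cap H = \{q_u, q'_u\}$. Suppose instead that $p$ is singular on $D$: then $p$ is a double point, so every line through $p$ meets $D$ at $p$ with multiplicity at least two, with equality outside the finite set of principal tangents at $p$. For such generic $u$ one has $M_u \cap D = 2p + q_u$ with $q_u \neq p$ varying algebraically with $u$, and subtracting $p$ yields $\widetilde{C}_u \cap H = \{p, q_u\}$.

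The proof is essentially a B\'ezout-style divisor computation in the pencil of planes through $\ell$; the only routine verification needed is that each exceptional condition excluded above --- tangency of $\widetilde{C}_u$ to $M_u$, $M_u$ coinciding with $L$ or with a principal tangent at a singular $p$, or $M_u$ being tangent to $D$ at a smooth point other than $p$ --- defines a proper closed subset of the parameter $\bP^1$, so the combined exceptional locus is finite.
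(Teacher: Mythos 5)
Your proof is correct and follows essentially the same route as the paper's: intersect the plane $\Pi_u$ through $\ell$ with $H$ to get a pencil of lines through the flex point $p=\ell\cap H$, and read off $\widetilde{C}_u\cap H$ from how these lines cut the cubic $D$, splitting into the smooth and singular flex cases. Your explicit divisor identity $M_u\cap D = p + (\widetilde{C}_u\cap M_u)$ is just a slightly more careful bookkeeping of what the paper states as $\widetilde{C}_u\cap H = L_u\cap D$, so no substantive difference.
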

\begin{proof}
    Any conic $\widetilde{C}_u$ corresponds to a plane $\Lambda_u$ such that $\Lambda_u \cap \s=\ell \cup \widetilde{C}_u$. The line $L_u \coloneqq \Lambda_u \cap H$ passes through $q_{\ell} \coloneqq \ell \cap H$, which is the flex point associated to $L \coloneqq \rho(\ell)$. Following the identification of \Cref{corrispondenza_punti_infinito}, if $D$ is smooth then $L_u$ intersects $D$ (generically) in two more points varying algebraically with $u$. If $D$ is singular and $q_{\ell}$ is the singular point of $D$, then $L_u$ has a double intersection with $D$ in $q_{\ell}$ and intersects $D$ in another point varying algebraically in $u$. The claim follows from the fact that $\widetilde{C}_u\cap H=L_u \cap D$. 
\end{proof}
Using \Cref{corrispondenza_punti_infinito} we get the following:
\begin{cor}\label{punti_infinito_beukers}
    For all but finitely many points the conics $C_u$ intersect $D$ in two points corresponding uniquely to the points at infinity of $\widetilde{C}_u$. If $\rho(\ell)\cap D$ is a smooth point of $D$ then the points of $C_u\cap D$ vary algebraically in $u$. If $\rho(\ell)\cap D$ is a singular point of $D$ then $C_u \cap D$ is given by the flex point $\rho(\ell)\cap D$ and a point varying algebraically in $u$.  In particular, there is a one to one correspondence between the conics $\widetilde{C}_u$ in $\s$ and the conics $C_u$ in $\bP^2$.
\end{cor}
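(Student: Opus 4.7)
The plan is to bootstrap the corollary from the preceding Lemma via the identification in \Cref{corrispondenza_punti_infinito}: because $\rho$ is totally ramified along $D$, the set $\widetilde{C}_u \cap H$ corresponds bijectively with $\rho(\widetilde{C}_u) \cap D = C_u \cap D$. Consequently, the three assertions about $C_u \cap D$ --- cardinality two, algebraic variation in $u$, and (in the singular case) the appearance of the flex point $\rho(\ell)\cap D$ --- would follow at once from the corresponding statements for $\widetilde{C}_u \cap H$ in the preceding Lemma, provided that $\rho|_{\widetilde{C}_u}\colon \widetilde{C}_u \to C_u$ is birational so that $C_u$ is genuinely a conic of the expected degree and the two intersection points are not collapsed.

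The main technical point I anticipate is precisely this birationality; I would verify it by writing the plane $\Lambda_u$ containing $\widetilde{C}_u$ as $ax+by+cz+dw=0$. First, $d\neq 0$: otherwise $\Lambda_u=H$ and we would have $\ell\subset \Lambda_u\cap\s=D$, contradicting the fact that the irreducible cubic $D$ contains no lines. With $d\neq 0$, the three preimages of a generic $q=[x:y:z]\in\bP^2$ under $\rho$ are the points $[x:y:z:\zeta w_0]$ for $\zeta^3=1$ and $w_0^3=F(x,y,z)$; the linear condition $d\zeta w_0=-(ax+by+cz)$ singles out a unique $\zeta$, so exactly one preimage lies on $\Lambda_u$, and hence on $\widetilde{C}_u$. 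Thus $\rho|_{\widetilde{C}_u}$ has degree one and $C_u$ is a conic for all but finitely many $u$, completing the transfer of the Lemma's first three assertions.

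For the final one-to-one correspondence between the conics $\widetilde{C}_u$ and the conics $C_u$, I would invoke the cube-roots-of-unity action on $\s$ given by $[x:y:z:w]\mapsto [x:y:z:\zeta w]$, which permutes the sheets of $\rho$ and sends $\ell$ to a distinct line $\zeta\ell$ for $\zeta\neq 1$ (since $\ell\not\subset H$). The preimage cycle $\rho^{-1}(C_u)\cap\s$ has total degree six and is therefore exhausted by the three conics $\widetilde{C}_u,\zeta\widetilde{C}_u,\zeta^2\widetilde{C}_u$, which are distinct because they lie in the distinct planes $\zeta\Lambda_u$. If $C_u=C_{u'}$, then $\widetilde{C}_{u'}$ must belong to this list; but $\widetilde{C}_{u'}$ lies in a plane containing $\ell$, which forces $\widetilde{C}_{u'}=\widetilde{C}_u$ and hence $u=u'$. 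Apart from the birationality step, the proof is essentially a clean transport of the preceding Lemma through the morphism $\rho$, so I do not foresee any further obstacle.
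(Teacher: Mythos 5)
Your overall route is the same as the paper's: the corollary is stated there as an immediate transfer of the preceding Lemma through the identification of \Cref{corrispondenza_punti_infinito} (no separate proof is given), and your two supplementary verifications --- the birationality of $\rho|_{\widetilde{C}_u}$ onto $C_u$, and the deck-transformation argument exhibiting $\rho^{-1}(C_u)$ as the union of the three conics $\widetilde{C}_u,\zeta\widetilde{C}_u,\zeta^2\widetilde{C}_u$ to get injectivity of $u\mapsto C_u$ --- are in the spirit of \Cref{tripla_tangenza} and are reasonable things to make explicit. (Your parenthetical reason that $\zeta\ell\neq\ell$ can be justified, e.g.\ because the action visibly permutes the three distinct lines of $\rho^{-1}(\rho(\ell))$ cyclically, cf.\ \Cref{preimage_lines}.)

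There is, however, one incorrect step: $d=0$ does \emph{not} force $\Lambda_u=H$. The plane $H$ is $(w=0)$, i.e.\ $a=b=c=0$; your argument (that $\ell\subset\Lambda_u\cap\s$ cannot lie in $D$) only rules out $\Lambda_u=H$, i.e.\ $(a,b,c)\neq(0,0,0)$. The condition $d=0$ says instead that $\Lambda_u$ contains the projection centre $[0:0:0:1]$, and this genuinely happens for one member of the pencil: since $[0:0:0:1]\notin\ell$, there is exactly one plane through $\ell$ and the centre, and for that parameter the projection collapses $\Lambda_u$ onto a line, so $\rho(\widetilde{C}_u)$ is a line (not a conic) and $\rho|_{\widetilde{C}_u}$ has degree $2$ onto its image. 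Thus the claim ``$d\neq0$'' is false as a statement about every $u$, and your justification of it does not prove it. The repair is easy and harmless: exclude this single parameter, which the ``for all but finitely many'' in the statement permits; for all remaining $u$ one has both $(a,b,c)\neq(0,0,0)$ and $d\neq0$, your computation of the unique cube root $\zeta$ goes through, and so does the distinctness of the three planes $\zeta^i\Lambda_u$ (which also uses $d\neq0$), so the rest of your argument is fine.
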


\begin{remark}\label{tripla_tangenza}
The conics $C_u$ are tangent with multiplicity $3$ to $D$ in both points of $C_u \cap D$. This follows from the fact that the preimage $\rho^{-1}(C_u)$ is given by three conics, one of which is $\widetilde{C}_u$ and the other two are obtained in the following way: intersect $\s$ with the two planes containing both $H\cap \widetilde{C}_u$ and one of the two lines in $\rho^{-1}(\rho(\ell))\setminus \ell$. Notice that these two other conics are members of the pencils of conics associated to the other two lines in $\rho^{-1}(\rho(\ell))$. 
\end{remark}
We want to find a line of $\s$ parametrizing the conics $\widetilde{C}_u$, and so the conics $C_u$ (due to \Cref{punti_infinito_beukers}).
\begin{lemma}
    Let $L$ and $L'$ be flex lines of $D$ associated to distinct flex points. Then each of the three lines in $\rho^ {-1}(L')$ intersects one and only one of the three lines in $\rho^{-1}(L)$.
\end{lemma}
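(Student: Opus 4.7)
The plan is to realize all six lines as sections of the two planes $\Pi_L := \{L=0\} \subset \bP^3$ and $\Pi_{L'} := \{L'=0\} \subset \bP^3$, and reduce the question to a comparison inside the line $\Lambda := \Pi_L \cap \Pi_{L'}$. By the proof of \Cref{preimage_lines}, I may write
\begin{equation*}
	F = L\cdot Q + M^3 = L'\cdot Q' + (M')^3
\end{equation*}
for quadratic forms $Q,Q'$ and linear forms $M,M'$ in $x,y,z$, so that the three lines in $\rho^{-1}(L)$ are $\ell_i\colon \{L = 0,\, w = \epsilon^i M\}$ and those in $\rho^{-1}(L')$ are $\ell'_j\colon \{L' = 0,\, w = \epsilon^j M'\}$ for $i,j\in\{0,1,2\}$, where $\epsilon$ is a primitive cube root of unity.

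The key observation is that $\Pi_L \neq \Pi_{L'}$ because the flex points, and hence the flex lines, are distinct. Hence $\Lambda$ is a line of $\bP^3$. Since $L$ and $L'$ involve only $x,y,z$, the projection center $[0{:}0{:}0{:}1]$ lies on $\Lambda$, so $\rho(\Lambda)$ is the single point $p := L\cap L' \in \bP^2$. Any point of $\ell_i \cap \ell'_j$ must lie on $\Lambda$, and the defining equations let me compute
\begin{equation*}
	\ell_i \cap \Lambda = [p_x:p_y:p_z:\epsilon^i M(p)],\qquad \ell'_j \cap \Lambda = [p_x:p_y:p_z:\epsilon^j M'(p)].
\end{equation*}
Thus $\ell_i \cap \ell'_j \neq \emptyset$ if and only if $\epsilon^i M(p) = \epsilon^j M'(p)$ as elements of $\bP^1$.

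The last step, which I view as the crucial one, is to turn this into a bijective matching. Evaluating the two expressions for $F$ at $p$ (where $L(p)=L'(p)=0$) yields
\begin{equation*}
	M(p)^3 = F(p) = M'(p)^3,
\end{equation*}
so provided both values are nonzero the ratio $M'(p)/M(p)$ is a well-defined cube root of unity $\epsilon^k$, and then the matching $i \leftrightarrow j \equiv i - k \pmod 3$ is a bijection, proving the lemma. The non-vanishing is where the hypothesis on distinct flex points enters: if $M(p)=0$, then $L(p)=M(p)=0$ forces $p$ to be the flex point $q$ of $L$, hence $q \in L'\cap D$; but $L'$ is a flex line, so $L'\cap D$ is the single flex point $q'$, giving $q = q'$ and contradicting the hypothesis. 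A symmetric argument shows $M'(p)\neq 0$, completing the proof.
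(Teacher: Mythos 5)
Your proof is correct, but it runs along a different track than the paper's. The paper argues synthetically: a line $\ell'\in\rho^{-1}(L')$ lies on $\s$, so its (unique, since $L\neq L'$) intersection point with the plane $\{L=0\}$ automatically lands on $\s\cap\{L=0\}=\rho^{-1}(L)$, giving existence; for uniqueness, if that point lay on two of the three lines it would be their common point, which lies on $H$ and maps to the flex of $L$, while $\ell'\cap H$ is the flex of $L'$ --- contradicting the distinct-flex hypothesis. You instead work entirely on the line $\Lambda=\{L=0\}\cap\{L'=0\}$ through the projection center, compute the single intersection point of each of the six lines with $\Lambda$ in coordinates, and reduce incidence to the identity $M(p)^3=F(p)=M'(p)^3$ at $p=L\cap L'$, so that the matching is governed by the cube root of unity $M'(p)/M(p)$ and is explicitly $j\mapsto j+k \pmod 3$. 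Your exclusion of $M(p)=0$ is exactly the paper's excluded degenerate configuration (the concurrency point of $\rho^{-1}(L)$ on $H$), and you handle it correctly via $L'\cap D$ being a single flex point. What your route buys is an explicit description of which line meets which (useful if one wants the correspondence concretely, e.g.\ for \Cref{section}), at the cost of relying on the algebraic normal form $F=LQ+M^3$ from the proof of \Cref{preimage_lines}; the paper's route is coordinate-free and slightly shorter, and its ``dual argument'' step, which you get for free from bijectivity of $j\mapsto j+k$, is what both proofs ultimately need downstream.
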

\begin{proof}
    Clearly any line $\ell' \in \rho^ {-1}(L')$ intersects at least one line of $\rho^{-1}(L)$ (and viceversa). No line of $\rho^ {-1}(L')$ is contained in $\rho^{-1}(L)$, since $L$ and $L'$ are distinct: it follows that $\ell'$ intersects the plane $L=0$ (containing the lines of $\rho^{-1}(L)$) exactly in one point. If $\ell'$ intersected two lines $\ell_1$ and $\ell_2$ of $\rho^{-1}(L)$, this would imply that $\ell'$ intersects the plane $L=0$ in the flex point associated to $L$, which is the point at infinity where the three lines of $\rho^{-1}(L)$ meet. At the same time the point at infinity of $\ell'$ is the flex associated to $L'$: this would imply that $L$ and $L'$ are the same line, which contradicts the hypothesis. A dual argument proves that no two lines of $\rho^ {-1}(L')$ meet the same line of $\rho^{-1}(L)$, thereby proving the claim.
\end{proof}

\begin{cor}\label{section}
    Let $L'$ be a flex line distinct from $L$ and let $\ell'$ be one of the two lines in $\rho^{-1}(L')$ not intersecting $\ell$. Then $\ell'$ is a section of $\mu_{\ell}$. In other words, the points in $\ell'$ parametrize the conics $\widetilde{C}_u$ and so the conics $C_u$.
\end{cor}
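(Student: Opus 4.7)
The plan is to show that $\ell'$ meets every plane of the pencil defining $\mu_\ell$ in exactly one point, and that this point automatically lands on the conic fiber rather than on $\ell$.

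First I would observe that $\ell$ and $\ell'$ are skew lines in $\bP^3$: by hypothesis $\ell' \cap \ell = \emptyset$, and two distinct lines in $\bP^3$ either meet or are skew. In particular, $\ell'$ is not contained in any plane containing $\ell$, since any such plane would contain both lines and force them to be coplanar (hence meeting in $\bP^3$). Consequently, for every $u \in \bP^1$ the plane $\Lambda_u$ cutting out $\widetilde{C}_u$ intersects $\ell'$ in a single point $p_u$.

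Next I would note that $p_u$ lies on $\s$ because $\ell' \subset \s$. Hence
\[
p_u \in \ell' \cap \Lambda_u \cap \s \subseteq \ell' \cap (\ell \cup \widetilde{C}_u),
\]
using the decomposition $\Lambda_u \cap \s = \ell \cup \widetilde{C}_u$. Since $\ell' \cap \ell = \emptyset$, we conclude $p_u \in \widetilde{C}_u$. Thus $\ell'$ meets each fiber of $\mu_\ell$ in exactly one point, which is precisely the statement that $\ell'$ is a section.

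There is no real obstacle here: everything reduces to the elementary fact that two non-intersecting lines in projective 3-space are skew and hence not coplanar. The only thing to be slightly careful about is the fiber over the point of $\bP^1$ corresponding to a possibly degenerate $\widetilde{C}_u$ (i.e.\ where $\Lambda_u \cap \s$ degenerates), but even then the intersection $\ell' \cap \Lambda_u$ is a single point disjoint from $\ell$, which still lies in the scheme-theoretic fiber, so the section extends over all of $\bP^1$ by properness of $\ell' \to \bP^1$. The final sentence of the corollary — that points of $\ell'$ parametrize the conics $\widetilde{C}_u$, and via Corollary \ref{punti_infinito_beukers} also the conics $C_u$ — is then immediate from the definition of a section.
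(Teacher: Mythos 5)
Your proof is correct and follows essentially the same route as the paper: since $\ell'$ does not meet $\ell$, it is not contained in any plane $\Lambda_u$ through $\ell$, so it meets $\Lambda_u$ (hence $\Lambda_u\cap\s=\ell\cup\widetilde{C}_u$, hence $\widetilde{C}_u$) in exactly one point. Your extra remarks on degenerate fibers and the skew-lines observation just spell out details the paper leaves implicit.
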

\begin{proof}
    This follows from the fact that $\ell'$ intersects in one point the plane $\Lambda_u$ containing $\widetilde{C}_u$: since it does not intersect $\ell$, it must intersect $\widetilde{C}_u$ in one point.
\end{proof}

\begin{remark}
    We stress that while the conics $\widetilde{C}_u$ form a one dimensional linear system, this is not true for $C_u$, which is just an algebraic family of conics. 
\end{remark}
Since $P$ is smooth in $D$, by \Cref{punti_infinito_beukers} there is only one conic $C_u$ passing through $P$. We denote by $\widehat{C}_u$ the strict transform of $C_u$ under the blowup $\sigma \colon X \to \bP^1$, which is simply the preimage of $C_u$ if $C_u$ is not passing through $P$. The following result will play the same role of Hypothesis \ref{ipotesi_ramificazione} for \Cref{fondamentalissimo}.
\begin{lemma}\label{beuker_ramification}
	For all but finitely many $u$ the restriction $\lambda \colon \widehat{C}_u \to \bP^1$ is a finite map of degree $2$ ramified in two points not lying on $D$. Moreover the branch values are non-constant algebraic functions of $u$.
\end{lemma}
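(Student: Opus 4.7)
The plan has two parts: geometric bookkeeping for the degree and ramification count, and a case analysis for non-constancy of the branch values. Choose $\ell$ so that $\rho(\ell) \cap D$ is a smooth point of $D$, which is possible by \Cref{existence_of_flex}; the flex point is then $[0:1:0]$ in our coordinates. For all but one value of $u$ (namely the unique $u^\ast$ with $P \in C_{u^\ast}$ noted just before the lemma), $\sigma$ is an isomorphism over $C_u$, so $\widehat{C}_u \cong C_u$, and a line through $P$ meets the smooth plane conic $C_u$ in two points by B\'ezout; hence $\lambda|_{\widehat{C}_u}$ is a degree-$2$ morphism of smooth rational curves, with exactly two ramification points by Riemann--Hurwitz. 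A ramification point is the contact point of a tangent line from $P$ to $C_u$; if such a point $Q$ lay in $C_u \cap D$, then by \Cref{tripla_tangenza} the tangent to $C_u$ at $Q$ coincides with the tangent to $D$ at $Q$, so $Q$ would be one of the finitely many points of $D$ whose tangent passes through $P$. Since the two points of $C_u \cap D$ vary non-trivially with $u$ by \Cref{punti_infinito_beukers}, this happens only for finitely many $u$.

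The main obstacle is the non-constancy of the branch values, which I establish by contradiction. Suppose the unordered pair of branch values on $\bP^1$ is constant, so every $C_u$ is tangent to two fixed lines $L_\alpha, L_\beta$ through $P$, with respective tangency points $R_\alpha(u) \in L_\alpha$ and $R_\beta(u) \in L_\beta$ algebraic in $u$. If both $R_\alpha$ and $R_\beta$ are constant, then $\{C_u\}$ is contained in the one-dimensional linear system (pencil) of conics tangent to $L_\alpha, L_\beta$ at these fixed points. However the explicit equation $C_u\colon 3ux^2 + 3u^2 xz + u^3 z^2 - G(x,y,z) = 0$ shows that the coefficients of $x^2, xz, z^2$ in the monomial basis are polynomials in $u$ of degrees $1, 2, 3$ respectively; the generically injective map $u \mapsto [C_u] \in \bP^5$ therefore has image of degree $3$, which cannot lie on a line in $\bP^5$. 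This rules out this case.

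If instead some tangency point is non-constant --- say $R_\alpha$ --- then $R_\alpha \colon \bP^1 \to L_\alpha$ is surjective, so some $u^\ast$ satisfies $R_\alpha(u^\ast) = P$; this forces $P \in C_{u^\ast}$, and by uniqueness of the conic through $P$ this $u^\ast$ is pinned down. If $R_\beta$ is also non-constant, the symmetric argument yields the same $u^\ast$, with $C_{u^\ast}$ tangent to two distinct lines at $P$ --- impossible for a smooth conic. If instead $R_\beta = [r_1:r_2:r_3]$ is constant, then $R_\beta$ lies on every $C_u$ and substituting into the equation of $C_u$ gives the polynomial identity $r_3^2 u^3 + 3 r_1 r_3 u^2 + 3 r_1^2 u - G(r_1, r_2, r_3) = 0$ in $u$; comparing coefficients forces $r_1 = r_3 = 0$ and $G(0,1,0) = 0$. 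But $G(0,1,0) = 0$ is the vanishing of $F_z$ at $[0:1:0]$, so $[0:1:0]$ would be singular on $D$, contradicting our choice of $\ell$. In every case we obtain a contradiction, proving the branch divisor is non-constant.
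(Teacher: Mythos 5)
The first half of your argument (degree $2$, exactly two ramification points, and the use of \Cref{tripla_tangenza} to push the tangency points off $D$) is correct and is essentially the paper's own argument. The problem is in the non-constancy part, where you negate the wrong statement. The lemma asserts that \emph{neither} branch value is constant in $u$, i.e.\ that no single line through $P$ is tangent to all (generic) $C_u$; the correct contradiction hypothesis is therefore that \emph{some one} branch value is constant. You instead assume that the whole unordered pair of branch values is constant, so that there are \emph{two} fixed lines $L_\alpha,L_\beta$ through $P$ tangent to every $C_u$. This is strictly stronger, and your case analysis never treats the scenario in which exactly one line $M$ through $P$ is tangent to all the $C_u$ while the other branch value moves with $u$. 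In that scenario only your ``constant tangency point'' computation survives (it does give a contradiction, via $G(0,1,0)=0$, when the point of contact with $M$ is independent of $u$); but when the contact point moves along the fixed line $M$, your argument only produces a parameter $u^\ast$ with $P\in C_{u^\ast}$ and $C_{u^\ast}$ tangent to $M$ at $P$, and by \Cref{tripla_tangenza} this merely says that $M$ is the tangent line to $D$ at $P$ --- no contradiction. So the case that actually requires work is untouched, and it is exactly the case the lemma is needed for: \Cref{solo_finiti_liftano_beukers} is run along the first alternative in the proof of \Cref{solo_finiti_liftano} (branch locus with no constant component), and a single constant branch value would force one into the second alternative of \ref{ipotesi_ramificazione}, which is not what is being verified.

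For comparison, the paper attacks precisely this missing case: assuming one line $M$ through $P$ tangent to all $C_u$, it puts $D$ in Weierstrass form, classifies the singular members of the family (products of flex lines at pairs of flexes symmetric under $y\mapsto -y$), uses continuity through these singular members to force $M$ to be the line $y=0$, and then contradicts B\'ezout because $M$ would be tangent to $D$ at the three nontrivial $2$-torsion points. Your Case 1 (two fixed lines and fixed contact points, ruled out by the degree-$3$ coefficient curve in $\bP^5$ not lying in the tangency pencil) and your coordinate computation forcing a common point of all the $C_u$ to be the flex $[0:1:0]$ are correct and nicely explicit, but they do not substitute for that step. A minor further point: you ``choose'' $\ell$ over a smooth flex inside the proof, whereas $\ell$ is given in the statement; this matches how the lemma is applied (there $\rho(\ell)\cap D$ is indeed a smooth point), but it should be flagged as a standing assumption rather than a new choice.
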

\begin{proof}
    Pick $u$ such that the conic $C_u$ is smooth and does not contain $P$, which happens for all but finitely many $u$. Since every line through $P$ intersects $C_u$ in two points and there are precisely two tangent lines to $C_u$ passing through $P$, we get that $\lambda_{|\widehat{C}_u}$ is a degree $2$ finite map ramified in two points.
    If one of the tangent lines meets $C_u$ on a point $Q \in D$, since $C_u$ is tangent to $D$ in $Q$ by \Cref{tripla_tangenza}, then the line would be tangent to $D$ in $Q$ (if $Q$ is a singular point, the line should be a principal tangent). There are only finitely many points on $D$ such that the lines joining them with $P$ are tangent to $D$ (recall that $P$ is a smooth point of $D$). Therefore, with the exception of the finitely many conics $C_u$ passing through these points, the tangents from $P$ to $C_u$ meet $C_u$ outside $D$.
	
	It remains to prove that there is no tangent line which is constant as $u$ varies.
	We limit ourselves to the case of $D$ smooth: the singular case can be dealt similarly. The statement is geometric, so we may work over $\overline{K}$ and assume that the smooth cubic $D$ is defined by a Weierstrass equation $y^2=x^3+ax+b$ and the line at infinity is the flex line $L$, meeting $D$ in $O$. There are $8$ more flexes given by the $3$-torsion points and they come in couples preserved by the reflection $\tau(y)= -y$. The conics $C_u$ are invariant under $\tau$: in particular, they meet $D$ generically in two points symmetric with respect to $\tau$.
	
	We first need to determine the singular elements of the family $C_u$. A conic $C_u$ is singular if and only if $\widetilde{C}_u$ is, and $\widetilde{C}_u$ is singular precisely when it is the product of two coplanar lines of $\s$ contained in a plane through $\ell$. One of these planes is $\rho^{-1}(L)$ and the corresponding conic $C_u$ is the double flex line $z^2=0$. There are four more planes intersecting $\s$ in two lines (in addition to $\ell$): their points of intersection with $H$ are couples of flexes $Q$ and $Q'$ with the same $x$-coordinate, that is, lying on the same vertical line. In this case $C_u$ is the product of the flex lines in $Q$ and $Q'$. Since there are always $8$ more flexes in addition to $O$, and they come in couples lying on the same vertical line, we find precisely four singular $C_u$'s (in addition to $z^2=0$). In particular, whenever $C_u$ meets $D$ in a point which is not a flex, then $C_u$ is smooth.
	
	Assume by contradiction that a branch point of the restriction $\lambda \colon \widehat{C}_u \to \bP^1$ is constant in $u$. Then there would be a line $M$ through $P$ which is tangent to all the conics $C_u$. Pick a couple of flexes $Q_1$ and $Q_2$ symmetric with respect to $\tau$. The singular conic $C_{u(Q_1)}$ passing through them is given by the product of the two flex lines in $Q_1$ and $Q_2$, meeting in a point $R$ outside $D$. The conic $C_{u(Q_1)}$ is invariant under $\tau$, so that $R$ must lie on the line $y=0$. Since $M$ intersects all smooth conics $C_u$ in one point, by continuity it must intersect also $C_{u(Q_1)}$ in one point, and so it must pass through $R$. Repeating the argument for all $4$ couples of flexes in vertical lines, we find that $M$ must be the line $y=0$. Using that $D$ is in Weierstrass form, $y=0$ intersects $D$ in three distinct points, corresponding to the three non-trivial $2$-torsion points. Since the flexes are precisely the $3$-torsion points, the non-trivial $2$-torsion points are not flexes. In particular, the conics $C_u$ passing through the non-trivial $2$-torsion points are smooth. For any non-trivial $2$-torsion point $Q$ the conic $C_u(Q)$ passing through it is tangent in $Q$ to both $M$ and $D$. Therefore $M$ is tangent to $D$ in all the three non-trivial $2$-torsion points, thereby contradicting Bezout's Theorem.
 \end{proof}

\begin{cor}\label{solo_finiti_liftano_beukers}
    Let $T \subset \bP^1(K)$ be a thin set. Then for all but finitely many $u \in \bP^1(K)$ the set $\lambda(\widehat{C}_u (\O_S)) \cap T$ is finite.
\end{cor}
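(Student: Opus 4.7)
The plan is to mirror the proof of \Cref{solo_finiti_liftano}, using \Cref{beuker_ramification} in place of \ref{ipotesi_ramificazione}. By the definition of a thin set, it suffices to treat a single $K$-irreducible finite cover $\phi \colon Z \to \bP^1$ of degree $\geq 2$ and show that $\phi(Z(K)) \cap \lambda(\widehat{C}_u(\O_S))$ is finite for all but finitely many $u \in \bP^1(K)$. If $Z$ is $K$-irreducible but geometrically reducible, then $Z(K)$ is finite by \Cref{remark_serre}, so we may assume $Z$ is absolutely irreducible, and we may further replace $Z$ by its normalization without affecting $\phi(Z(K))$ up to a finite set. Let $B_\phi \subset \bP^1$ denote the (finite) branch locus of $\phi$.

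By \Cref{beuker_ramification}, the degree-$2$ map $\lambda|_{\widehat{C}_u} \colon \widehat{C}_u \to \bP^1$ is, for all but finitely many $u$, ramified at two points lying outside $\widehat{D}$, and its two branch values in $\bP^1$ vary non-constantly in $u$. Since $B_\phi$ is finite, the set of $u$ for which the branch locus of $\lambda|_{\widehat{C}_u}$ meets $B_\phi$ is finite, and likewise (using \Cref{punti_infinito_beukers}) so is the set of $u$ for which $\lambda(\widehat{C}_u \cap \widehat{D})$ meets $B_\phi$. For $u$ outside these finite exceptional sets, the fiber product $V_u \coloneqq Z \times_{\bP^1} \widehat{C}_u$ is smooth, and the induced map $\phi' \colon V_u \to \widehat{C}_u$ has branch locus $(\lambda|_{\widehat{C}_u})^{-1}(B_\phi)$. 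Since $\bP^1$ is simply connected, Riemann--Hurwitz forces $|B_\phi| \geq 2$, so $\phi'$ has at least $4$ branch points, all disjoint from the (at most two) points of $\widehat{C}_u \cap \widehat{D}$. A standard specialization/monodromy argument (the generic fiber of $V_u \to \widehat{C}_u$ over the generic point of the pencil is absolutely irreducible because $\phi$ is) shows $V_u$ is absolutely irreducible for all but finitely many $u$, so we may apply \Cref{trick_siegel} (case \labelcref{ramified}) to conclude that only finitely many points of $\widehat{C}_u(\O_S)$ lift to $V_u(K)$.

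To finish, note that if $a \in \lambda(\widehat{C}_u(\O_S)) \cap \phi(Z(K))$, then picking $q \in \widehat{C}_u(\O_S)$ with $\lambda(q) = a$ and $z \in Z(K)$ with $\phi(z)=a$ produces a $K$-point $(z,q) \in V_u(K)$; hence every such $q$ lifts to $V_u(K)$. The previous step bounds the number of such $q$, and therefore the number of such $a$, by a finite constant. Since this holds for each cover in a finite thin decomposition of $T$, taking the union of the finitely many exceptional $u$'s for each cover yields the claim.

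The main obstacle I expect is the verification that $V_u$ is absolutely irreducible for generic $u$; unlike in \Cref{solo_finiti_liftano}, the family $\widehat{C}_u$ is not a linear system on $X$, so one must justify this directly from the pencil structure of $\widetilde{C}_u$ on $\mathcal{S}$ and the fact that $\rho$ is a finite map. However, since the $\widetilde{C}_u$ do form a linear system (they are cut out by planes through $\ell$), absolute irreducibility of the generic $V_u$ follows from applying Bertini to the fiber product of the linear pencil $\mu_\ell$ with the cover of $X$ obtained by pulling back $\phi$ along $\lambda$, and the case of $\widehat{C}_u$ is obtained by a finite-to-one projection, which preserves generic irreducibility.
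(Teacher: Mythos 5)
Your skeleton is exactly the one the paper intends (its proof of this corollary is literally ``run the argument of \Cref{solo_finiti_liftano}, with \Cref{beuker_ramification} playing the role of \ref{ipotesi_ramificazione}''): reduce to one absolutely irreducible smooth cover $\phi\colon Z\to\bP^1$, use the non-constancy of the branch values of $\lambda|_{\widehat{C}_u}$ to make the two branch loci disjoint for all but finitely many $u$, count at least four branch points of $\phi'\colon V_u\to \widehat{C}_u$ against at most two points of $\widehat{C}_u\cap\widehat{D}$, apply \Cref{trick_siegel}, and convert $a\in\lambda(\widehat{C}_u(\O_S))\cap\phi(Z(K))$ into lifts $(z,q)\in V_u(K)$. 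All of that is correct.

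The place where your write-up has a genuine hole is precisely the step you flag as the main obstacle: the absolute irreducibility of $V_u$. The parenthetical reason you give (``because $\phi$ is absolutely irreducible'') proves nothing about a fiber product, and the proposed repair via Bertini applied to the pencil $\mu_\ell$ on $\s$ is neither needed nor valid as sketched: Bertini irreducibility fails for pencils without extra analysis (that is exactly what \Cref{generically_reducible} is for, and it concerns covers of the \emph{surface}, which is not the situation here), and ``a finite-to-one projection preserves generic irreducibility'' is not a correct principle in the direction you use it. No statement about the family $\{\widehat{C}_u\}$, linear system or not, is required, because the question is curve-by-curve: if $V_u=Z\times_{\bP^1}\widehat{C}_u$ (which is smooth once the branch loci are disjoint) were geometrically reducible, then, since $V_u\to Z$ has degree $2=\deg\lambda|_{\widehat{C}_u}$, some component would map to $Z$ with degree one, hence isomorphically, yielding a factorization of $\phi$ through $\lambda|_{\widehat{C}_u}$; then $\phi$ would be ramified over the two branch points of $\lambda|_{\widehat{C}_u}$, contradicting the disjointness of branch loci you already arranged via \Cref{beuker_ramification}. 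This is word for word the argument displayed in the second case of the proof of \Cref{solo_finiti_liftano}, so the gap is easy to close, but as written your justification does not stand. A smaller point: your claim that only finitely many $u$ have $\lambda(\widehat{C}_u\cap\widehat{D})$ meeting $B_\phi$ can fail when $\ell$ lies over a \emph{singular} flex, since by \Cref{punti_infinito_beukers} one point of $\widehat{C}_u\cap\widehat{D}$ is then constant in $u$; fortunately the step is superfluous, because \Cref{trick_siegel} only needs one branch point of $\phi'$ outside $\widehat{D}$, and that already follows from your count of at least four branch points versus at most two points at infinity.
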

\begin{proof}
    This follows with the same strategy of \Cref{solo_finiti_liftano}.
\end{proof}

Let $\rho_X \colon \s_X \to X$ be the base change of $\rho \colon \s \to \bP^2$ to $X$. The surface $\s_X$ is birational to $\s$ and so it is absolutely irreducible. Then the map $\rho_X$ is finite of degree $3$. Since $\rho$ is totally ramified above $D$ and the blown up point $P$ lies on $D$, the map $\rho_X$ is also totally ramified above $E$.
Let $Y$ be a normal absolutely irreducible projective variety together with a finite map $\pi \colon Y \to X$ of degree $>1$. Then the base change $\rho_Y \colon\s_Y \to Y$ of $\rho \colon \s \to \bP^2$ to $Y$ fits into the following commutative diagram
\begin{equation}\label{diagramma_s_y}
	\begin{tikzcd}
		\s_{Y} \arrow{r}{\pi'} \arrow{d}{\rho_Y} & \s_X \arrow{r}{\sigma'}\arrow{d}{\rho_X} & \s \arrow{d}{\rho} \\
		Y \arrow{r}{\pi} & X \arrow{r}{\sigma} & \bP^2
	\end{tikzcd}
\end{equation}
In the above diagram all the vertical maps are finite, since $\rho$ is finite. We have the following:
\begin{lemma}\label{s_abs_irred}
	If $\pi$ is unramified above the exceptional divisor $E$, then $\s_Y$ is absolutely irreducible.
\end{lemma}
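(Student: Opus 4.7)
My strategy is to realise $\rho\colon\s\to\bP^2$ as a cyclic triple cover by adjoining a cube root of an explicit rational function, and then use the unramified hypothesis to show that this function does not become a cube after pullback to $Y$.

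I will fix a $K$-linear form $L\in K[x,y,z]$ with $L(P)\ne 0$ and set $f:=F/L^3\in K(\bP^2)^{\ast}$. Since $(w/L)^3=f$ on $\s$, working over $\overline K$ the function ring of $\s_Y=\s\times_{\bP^2}Y$ is $\overline K(Y)[T]/(T^3-\pi^{\ast}\sigma^{\ast}f)$. Because $\overline K$ contains a primitive cube root of unity, this ring is a field if and only if $\pi^{\ast}\sigma^{\ast}f$ is not a cube in $\overline K(Y)^{\ast}$. Hence absolute irreducibility of $\s_Y$ is equivalent to $\pi^{\ast}\sigma^{\ast}f$ being a non-cube in $\overline K(Y)^{\ast}$.

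Next I will compute divisors. The smoothness of $D$ at $P$ gives $\sigma^{\ast}D=\widehat D+E$, and the choice $L(P)\ne 0$ gives $\sigma^{\ast}(L=0)=\widehat{(L=0)}$. Therefore $\Div_X(\sigma^{\ast}f)=\widehat D+E-3\widehat{(L=0)}$, and pulling back,
\[
\Div_Y(\pi^{\ast}\sigma^{\ast}f)=\pi^{\ast}\widehat D+\pi^{\ast}E-3\pi^{\ast}\widehat{(L=0)}.
\]
Assume for contradiction that $\pi^{\ast}\sigma^{\ast}f=h^3$ for some $h\in\overline K(Y)^{\ast}$. Then every irreducible component of $Y$ must appear in the displayed divisor with multiplicity divisible by $3$. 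Let $\Gamma$ be any irreducible component of $\pi^{-1}(E)$. The hypothesis that $\pi$ is unramified above $E$ says that $\Gamma$ appears in $\pi^{\ast}E$ with multiplicity exactly $1$; and since $\Gamma$ maps dominantly onto $E$, while $E$ is a curve on $X$ distinct from both $\widehat D$ and $\widehat{(L=0)}$, the component $\Gamma$ cannot appear in $\pi^{\ast}\widehat D$ or $\pi^{\ast}\widehat{(L=0)}$. Thus $\Gamma$ occurs in $\Div_Y(\pi^{\ast}\sigma^{\ast}f)$ with multiplicity exactly $1$, contradicting divisibility by $3$.

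I expect the only real subtlety to be the claim that irreducible components of $\pi^{-1}(E)$ are disjoint from those of $\pi^{-1}(\widehat D)$ and $\pi^{-1}(\widehat{(L=0)})$; this rests on the elementary fact that an irreducible curve in $Y$ dominates a unique irreducible curve in $X$ under the finite map $\pi$. Once this disjointness is granted, the unramified hypothesis supplies the multiplicity~$1$ that obstructs cube-divisibility and yields absolute irreducibility of $\s_Y$.
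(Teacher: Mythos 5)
Your proof is correct, but it takes a genuinely different route from the paper's. The paper argues by contradiction at the level of components: if $\s_Y$ were reducible over $\overline{K}$, the degree-$3$ map $\rho_Y$ would have an irreducible component mapping to $Y$ with degree $1$; since $Y$ is normal, this finite birational map is an isomorphism, producing a finite morphism $Y \to \s_X$ over $X$, and then total ramification of $\rho_X$ above $E$ forces $\pi$ to be ramified over $E$, contradicting the hypothesis. You instead exploit the explicit Kummer structure of $\rho$: absolute irreducibility of $\s_Y$ is equivalent to $\pi^{*}\sigma^{*}(F/L^{3})$ not being a cube in $\overline{K}(Y)^{*}$, and the order of this function along any component $\Gamma$ of $\pi^{-1}(E)$ is exactly $1$ (unramifiedness over $E$ times the coefficient $1$ of $E$ in $\widehat D + E - 3\widehat{(L=0)}$), which is not divisible by $3$. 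Both arguments ultimately play the same incompatibility (index $3$ along $E$ for $\rho_X$ versus index $1$ for $\pi$), but yours is a self-contained valuation-theoretic computation that avoids the appeal to Zariski's main theorem (the Liu citation), while still using normality of $Y$ to have divisorial valuations at codimension-one points; the paper's argument, by contrast, does not need the cover to be cyclic, only that it has prime degree and is totally ramified along $E$. One small point you should make explicit: to identify irreducibility of $\s_Y$ with irreducibility of the generic-fibre algebra $\overline{K}(Y)[T]/(T^{3}-\pi^{*}\sigma^{*}f)$ you need every irreducible component of $\s_Y$ to dominate $Y$; this holds because $\rho$ is finite locally free of rank $3$ (it is the hypersurface cover $w^{3}=F$), so the base change $\rho_Y$ is finite flat over the integral scheme $Y$ and has no components lying over proper closed subsets.
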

\begin{proof}	
	Assume by contradiction that $\s_Y$ is reducible over $\overline{K}$. The restriction of $\rho_Y$ to any irreducible component of $Y$ is still a finite map. Since $\rho_Y$ has degree $3$, there must be an irreducible component $Z$ of $\s_Y$ such that the restriction $\rho_{Y \vert Z} \colon Z \to Y$ is a degree $1$ finite map. Using that $Y$ is normal, \cite[Corollary $4.6.6$]{liu} ensures that $\rho_{Y \vert Z}$ is an isomorphism. In particular, there exists a finite morphism $Y \to \s_X$ (since $\s_Y \to \s_X$ is finite) such that the following diagram commutes
	\begin{equation*}
		\begin{tikzcd}
			& \s_X \arrow{d} \\
			Y \arrow{r}{\pi} \arrow{ur} & X 
		\end{tikzcd}
	\end{equation*} 
	Since $\s_X \to X$ is totally ramified above $E$, then $Y \to X$ is also ramified over $E$, contradicting the assumption.
\end{proof}

\subsection{Integral points on $X$}
We will now describe how to extend $K$ and $S$ and how to choose the integral models for $(\bP^2,D)$, $(\s, \s \cap H)$ and $(X, \widehat{D})$ that we will use in the proof of \Cref{irreducible}. A summary of the necessary enlargement of $K$ and $S$ is given at the end of this section.

There exists a linear automorphism of $\bP^2$, possibly defined over $\overline{K}$, which sends $D$ to the zero locus of a homogenous cubic polynomial $F \in \overline{K}[x,y,z]$ of the form $F=x^3+zG(x,y,z)$. Enlarging $K$ and $S$, we may assume that $F \in \O_S[x,y,z]$. We let $\mathcal{D}\coloneqq (F=0)$. It follows that, up to enlarging $K$ and $S$, $(\bP^2_{\O_S},\mathcal{D})$ is an $\O_S$-integral model for $\bP^2\setminus D$.
The blown-up point $P \in D$ corresponds to a $\O_S$-point $\mathcal{P}$ contained $\mathcal{D}$. Let $\mathcal{X}$ be the blow-up of $\bP^2_{\O_S}$ along $\mathcal{P}$ and let $\widehat{\mathcal{D}}$ be the strict transform of $\mathcal{D}$. Then, up to enlarging $S$, we have that $(\mathcal{X},\widehat{\mathcal{D}})$ is an $\O_S$-integral model for $(X, \widehat{D})$.
For the pair $(\s, H)$ we take the integral model $\Proj\left(\O_S[x,y,z,w]/(w^3-F) \right)$ and the hyperplane divisor cut out by $(w=0)$.
Since all the integral models have been fixed, from now on we will follow the notational convention of \Cref{notation_clarification}.

Since $\rho \colon \s \to \bP^2$ extends to an $\O_S$-morphism of pairs between the integral models of $(\s,H)$ and $(\bP^2,D)$, we have the following:
\begin{lemma}\label{integral_rho}
    With the above choice of integral models we have that $\rho$ maps $(\s\setminus H)(\O_S)$ to $(\bP^2\setminus D)(\O_S)$.
\end{lemma}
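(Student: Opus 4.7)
The plan is to reduce the claim to the geometric observation that, at the level of $\O_S$-integral schemes, the preimage $\rho^{-1}(\mathcal{D})$ coincides set-theoretically with $H \cap \s$. Once this identification is established, the lemma follows by composing sections: an $S$-integral point of $\s \setminus H$ is by definition a section $\sigma \colon \Spec \O_S \to \s$ whose image avoids $H \cap \s$, and composing with the $\O_S$-morphism $\rho$ yields a section $\rho \circ \sigma \colon \Spec \O_S \to \bP^2_{\O_S}$ whose image avoids $\mathcal{D}$, i.e. an $S$-integral point of $\bP^2 \setminus D$ mapping to $\rho(P)$.

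To justify the identification $\rho^{-1}(\mathcal{D}) = H \cap \s$, I would work on each of the standard affine charts of $\bP^3_{\O_S}$. On such a chart the defining equation $w^3 = F(x,y,z)$ of $\s$ shows that the ideal generated by $F$ in the coordinate ring of $\s$ coincides with the ideal generated by $w^3$. The closed subscheme $V(w^3)$ is supported on $V(w) = H \cap \s$, so $\rho^{-1}(\mathcal{D})$ and $H \cap \s$ have the same underlying set, which is all that is needed for the containment of images under a section.

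Equivalently, one can give a direct coordinate argument: for any prime $\mathfrak{p} \notin S$, the point $P \in \s(K)$ being integral with respect to $H$ means that after scaling we may represent it by coordinates $x,y,z,w \in \O_\mathfrak{p}$ of trivial $\mathfrak{p}$-adic content with $w$ a unit at $\mathfrak{p}$. The relation $w^3 = F(x,y,z)$ then forces $F(x,y,z)$ to be a unit at $\mathfrak{p}$ as well, which is exactly the statement that $[x:y:z]$ is $\mathfrak{p}$-integral with respect to $D$. There is no real obstacle in the argument; the entire content of the lemma is the compatibility between the defining equation of $\s$ and the respective divisors at infinity of the two chosen integral models, and the proof is essentially a one-line verification packaged into the language of sections.
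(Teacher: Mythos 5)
Your argument is correct and is essentially the paper's own justification: the paper simply observes that $\rho$ extends to an $\O_S$-morphism of pairs between the chosen integral models of $(\s,H)$ and $(\bP^2,D)$, which is exactly the compatibility $\rho^{-1}(\mathcal{D})\subseteq \s\cap (w=0)$ (set-theoretically, via $w^3=F$) that you verify before composing sections. Your coordinate check is a fine spelled-out version of the same one-line observation.
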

\begin{remark}\label{integral_blowup}
Notice that the same does \emph{not} hold for the blow-up map $\sigma \colon X \to \bP^2$. The reason is that $\sigma^{-1}(D)=E+\widehat{D}$, so $\sigma$ maps a point $q \in X(\O_S)$ to $(\bP^2\setminus D)(\O_S)$ only if $q$ is also $S$-integral with respect to $E$. However, if $p \in (\bP^2\setminus D)(\O_S)$ and there exists $q \in X(K)$ such that $\sigma(q)=p$, then $q \in X(\O_S)$ and $q$ is actually $S$-integral also with respect to $E$.
\end{remark}

By \Cref{existence_of_flex} we know that there is at least one flex line $L$ associated to a smooth point of $D$ (possibly the blown-up point $P$): we enlarge $K$ so that $L$ is defined over the base field. We further enlarge $K$ to ensure that another flex line $L'$ (possibly associated to a singular flex) is defined over the base field. We enlarge $K$ and $S$ so that $K$ contains the third roots of unity and $\# S \ge 2$ (in particular, the hypotheses of \Cref{punti_infiniti_giusto} are met). Using \Cref{preimage_lines}, \Cref{third_roots} and \Cref{section} we get that the cubic surface $\s$ contains two skew $K$-lines $\ell$ and $\ell'$ such that $\rho(\ell)=L$, $\rho(\ell')=L'$ and $\ell'$ is a section of the conic fibration $\mu_l \colon \s \dashrightarrow \bP^1$ associated to $\ell$. Enlarging $S$ we may assume that $\ell'(\O_S)$ is non-empty and hence non-thin by \Cref{pell}. We can finally obtain the first non-thin family of conics with infinitely many integral points:
\begin{proposition}\label{non_thin_beukers_family}
    With the above enlargements of $K$ and $S$, the set $\{u \in \bP^1(K) \, \vert \, \widehat{C}_u(\O_S) \, \text{is infinite}  \}$ is not thin.
\end{proposition}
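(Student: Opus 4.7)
The plan is to use the section $\ell'$ of $\mu_\ell$ to transport the non-thin set of $S$-integral points $\ell'(\O_S)$ into a non-thin set of parameters $u \in \bP^1(K)$ for which $\widehat{C}_u$ contains at least one $S$-integral point. Since $K$ has been enlarged to contain the third roots of unity, it has no real embedding and all its archimedean places are complex; combined with $\#S \ge 2$, this puts us in the setting of \Cref{punti_infiniti_giusto}, so producing a single integral point on $\widehat{C}_u$ will be enough to conclude that $\widehat{C}_u(\O_S)$ is infinite.

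First, since $\ell'$ is a $K$-section of $\mu_\ell$, the restriction $\mu_\ell|_{\ell'}\colon \ell' \to \bP^1$ is a $K$-isomorphism. Hence the image $U \coloneqq \mu_\ell(\ell'(\O_S)) \subset \bP^1(K)$ is non-thin, because $\ell'(\O_S)$ is non-thin by \Cref{pell} (recall that $\ell'$ meets $H$ only in the flex associated to $L'$, so the hypothesis of \Cref{pell} applies after our enlargement of $S$). For each $q \in \ell'(\O_S)$ with parameter $u \coloneqq \mu_\ell(q)$, the conic $\widetilde{C}_u$ passes through $q$ by construction, so by \Cref{integral_rho} the image $\rho(q)$ is an $S$-integral point of $(\bP^2, D)$ lying on $C_u = \rho(\widetilde{C}_u)$.

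Next, I would remove from $U$ the finitely many parameters $u$ for which either $C_u$ passes through the blown-up point $P$ (finite by \Cref{punti_infinito_beukers}, since the points of $C_u \cap D$ vary algebraically in $u$) or $\widehat{C}_u$ fails to be a smooth rational $K$-curve meeting $\widehat{D}$ in at most two smooth points; call the resulting still non-thin subset $U'$. For $u \in U'$ the strict transform agrees with the full preimage, $\widehat{C}_u = \sigma^{-1}(C_u)$, and by \Cref{integral_blowup} the lift $\sigma^{-1}(\rho(q))$ is an $S$-integral point of $(X, \widehat{D})$ lying on $\widehat{C}_u$. Applying \Cref{punti_infiniti_giusto} to $\widehat{C}_u$ yields that $\widehat{C}_u(\O_S)$ is infinite for every $u \in U'$, which proves the proposition.

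The argument is mostly a bookkeeping exercise, given all the geometric preparation already in place (most crucially the existence of the $K$-section $\ell'$ from \Cref{section} guaranteed by the enlargement of $K$ to contain the third roots of unity, the integrality statements \Cref{integral_rho} and \Cref{integral_blowup}, and the parametric behaviour of $C_u \cap D$ from \Cref{punti_infinito_beukers}); the only real care needed is in verifying that only finitely many parameters must be excluded, so that the non-thinness of $U$ is preserved when passing to $U'$.
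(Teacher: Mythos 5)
Your argument is correct and follows essentially the same route as the paper's proof: both use the section $\ell'$ to obtain the non-thin parameter set $\mu_{\ell}(\ell'(\O_S))$, transport one integral point onto each conic via \Cref{integral_rho} and \Cref{integral_blowup}, and conclude with \Cref{punti_infiniti_giusto}. The only (immaterial) difference is that the paper applies \Cref{punti_infiniti_giusto} upstairs to $\widetilde{C}_u \subset \s$ and then pushes the infinitely many integral points through $\rho$ and $\sigma$, whereas you transport a single integral point first and apply the corollary directly to $\widehat{C}_u$.
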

\begin{proof}
    Let $\mathcal{U} \coloneqq \mu_{\ell}(\ell'(\O_S)) \setminus \{ u \, \vert \, \text{$\widetilde{C}_u$ is singular}\}$. Since $\ell'(\O_S)$ is not thin and $\ell'$ is a section of $\mu_l$, then also $\mathcal{U}$ is not thin. For every $u \in \mathcal{U}$ the conic $\widetilde{C}_u$ contains one $S$-integral point and so by \Cref{punti_infiniti_giusto} it contains infinitely many. \Cref{integral_rho} ensures that $C_u(\O_S)$ is infinite for $u \in \mathcal{U}$. \Cref{integral_blowup} gives the claim.
\end{proof}

The second family of conics is much more easily produced. We let $\widehat{L}_l \coloneqq \lambda^{-1}(l)$, which is the strict transform under $\sigma$ of a line through $P$. Notice that the conics $\widehat{L}_l$ generically intersect $\widehat{D}$ in two points. The exceptional divisor $E$ over $P$ is a smooth rational curve defined over $K$ intersecting $\widehat{D}$ in a single point: in particular, possibly after enlarging $S$, we may assume that $(E\setminus \widehat{D})(\O_S)$ is non-empty, hence non-thin by \Cref{pell}. 
\begin{proposition}\label{non_thin_lambda}
     With the above enlargements of $K$ and $S$, the set $\{l \in \bP^1(K) \, \vert \, \widehat{L}_l(\O_S) \, \text{is infinite}  \}$ is non-thin.
\end{proposition}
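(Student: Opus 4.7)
The plan is to exploit the fact that the exceptional divisor $E$ is a section of the pencil $\lambda \colon X \to \bP^1$. Indeed, $\lambda$ parameterizes strict transforms of lines through $P$, and each such line has a unique tangent direction at $P$ corresponding to a unique point of $E$, so the restriction $\lambda_{|E}$ is a $K$-isomorphism between $E$ and $\bP^1$. This will let me transport non-thinness from $E$ to the base $\bP^1$.

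The preceding paragraph of the paper already guarantees that $(E \setminus \widehat{D})(\O_S)$ is non-empty. I would then apply \Cref{pell} to the smooth rational curve $E$ with its single $K$-rational point at infinity (the intersection $E \cap \widehat{D}$), which yields that $(E \setminus \widehat{D})(\O_S)$ is in fact non-thin. Setting $\Lambda \coloneqq \lambda((E \setminus \widehat{D})(\O_S))$, this produces a non-thin subset of $\bP^1(K)$, and by construction each $l \in \Lambda$ corresponds to a fiber $\widehat{L}_l$ that already contains the $S$-integral point $\lambda_{|E}^{-1}(l)$.

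It remains to upgrade ``$\widehat{L}_l$ contains one $S$-integral point'' to ``$\widehat{L}_l$ contains infinitely many''. For all but finitely many $l$, the fiber $\widehat{L}_l$ is a smooth rational $K$-curve meeting $\widehat{D}$ in at most two points (either both defined over $K$, or forming a conjugate pair over a quadratic extension of $K$); removing those finitely many exceptional values from $\Lambda$ does not affect non-thinness. Since $K$ contains a primitive third root of unity it has a complex place, and $\# S \ge 2$ by construction, so the hypotheses of \Cref{punti_infiniti_giusto} are met. That corollary then yields that $\widehat{L}_l(\O_S)$ is either empty or infinite; being non-empty, it is infinite, which proves that the parameter set in the statement is non-thin. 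There is no serious obstacle here: the whole argument reduces to recognizing $E$ as a section of $\lambda$ and invoking the Pell-type consequence of Siegel's theorem already available to us.
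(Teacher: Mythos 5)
Your argument is correct and is essentially the paper's own proof: the paper likewise notes that $(E\setminus\widehat{D})(\O_S)$ is non-empty, hence non-thin by \Cref{pell}, and then deduces the claim from \Cref{punti_infiniti_giusto} together with the fact that $E$ is a section of $\lambda$. You have merely spelled out the details (the isomorphism $\lambda_{|E}\colon E\to\bP^1$, discarding the finitely many bad fibers, and checking the hypotheses of \Cref{punti_infiniti_giusto}), all of which is consistent with the paper's intended reasoning.
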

\begin{proof}
    The claim follows from \Cref{punti_infiniti_giusto} and the fact that $E$ is a section of $\lambda$.
\end{proof}

To summarize, we have enlarged $K$ and $S$ so that the following hold:
\begin{itemize}
    \item there exists $\mathcal{P}\in \bP^2_{\O_S}(\O_S)$ and a homogeneous cubic polynomial $F\in \O_S[x,y,z]$ of the form $F=x^3+zG(x,y,z)$ such that $\mathcal{P}$ lies on $\mathcal{D}\coloneqq (F=0)$ and such that an $\O_S$-integral model for the pair $(X,\widehat{D})$ is given by $(\mathcal{X},\widehat{\mathcal{D}})$, where $\mathcal{X}$ is the blow-up of $\bP^2_{\O_S}$ along $\mathcal{P}$ and $\widehat{\mathcal{D}}$ is the strict transform of $\mathcal{D}$;
    \item two flex lines $L$ and $L'$ of $(F=0)$ are defined over $K$ and the flex at $L$ is a smooth $K$-point of $(F=0)$;
    \item $K$ contains the third roots of unity and $\# S \ge 2$;
    \item $(\ell'\setminus H)(\O_S)$ and $(E\setminus \widehat{D})(\O_S)$ are both non-empty.
\end{itemize}

\subsection{Proof of \Cref{irreducible}}

We can finally proceed to the proof of the Hilbert Property.

\begin{proof}
    Assume by contradiction that $(X\setminus D)(\O_S)$ is thin. By \Cref{reduction_finite} there exists finitely many absolutely irreducible normal projective $K$-varieties $Y_i$ equipped with finite $K$-maps $\pi_i \colon Y_i \to X$ of degree $>1$ such that $(X\setminus D)(\O_S) \setminus \bigcup_i(\pi_i(Y_i(K)))$ is contained in a curve. 
    Arguing as in the proof of \Cref{fondamentalissimo}, we can find a thin set $T \subset \bP^1(K)$ such that, for $l \notin T$, only finitely many points of $\widehat{L}_l(\O_S)$ lift to the covers $\pi_i$ which are $\lambda$-ramified. Therefore, if all covers $\pi_i$ were $\lambda$-ramified, a contradiction would follow from \Cref{non_thin_lambda}. We may then assume that there is at least one $\lambda$-unramified cover. Arguing as in the proof of \Cref{fondamentalissimo}, we can find a thin set $T \subset \bP^1(K)$ (obtained enlarging the previous $T$) such that, for $l \notin T$, whenever $\widehat{L}_l(\O_S)$ is infinite then the $\lambda$-unramified covers lift \emph{all} $S$-integral points on $\widehat{L}_l$. We will prove the following:

	\begin{claim}\label{claim3}
         There exists a thin set $T' \subset \bP^1(K)$ such that, for $u \notin T'$, only finitely many $S$-integral points of $\widehat{C}_u$ lift to $\lambda$-unramified covers.
    \end{claim}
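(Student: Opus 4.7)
The plan is to argue separately for each $\lambda$-unramified cover $\pi_i\colon Y_i\to X$, producing a thin set $T_i'\subset\bP^1(K)$ outside which only finitely many $S$-integral points of $\widehat{C}_u$ lift to $Y_i$; the union $T'\coloneqq\bigcup_i T_i'$ is then thin. By the definition of $\lambda$-unramified the branch divisor of $\pi_i$ is contained in $\widehat{D}\cup\widehat{L}_{l_1}\cup\cdots\cup\widehat{L}_{l_{k_i}}$. Since $E$ is a section (not a fiber) of $\lambda$, $\pi_i$ is unramified above $E$, and by \Cref{s_abs_irred} the base change $\s_{Y_i}$ appearing in \eqref{diagramma_s_y} is absolutely irreducible. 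Moreover $X\setminus\widehat{D}$ is simply connected by \Cref{caratterizzazione_complementi}\labelcref{caso_irriducibile}, so no non-trivial \'etale cover of $X\setminus\widehat{D}$ exists; consequently at least one $\widehat{L}_{l_j}$ must appear as an honest component of the branch divisor of $\pi_i$.

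The argument now splits according to whether $\pi_i^{-1}(\widehat{C}_u)$ is geometrically reducible or irreducible for generic $u$. In the reducible case, the family of Beukers conics on $X$ is not a linear system, so \Cref{generically_reducible} cannot be applied directly to $\lambda\circ\pi_i$. Instead I transfer the reducibility question to $\s_{Y_i}$, where the Beukers family lifts to the genuine pencil $\nu\coloneqq\mu_\ell\circ\sigma'\circ\pi'\colon\s_{Y_i}\dashrightarrow\bP^1$. For generic $u$ the restriction $\rho_{Y_i}\colon\nu^{-1}(u)\to\pi_i^{-1}(\widehat{C}_u)$ is a finite $K$-morphism of degree one (since $\rho|_{\widetilde{C}_u}\colon\widetilde{C}_u\to C_u$ is a birational isomorphism of conics, as $\rho^{-1}(C_u)$ splits into three conics), and such a morphism preserves the number of $K$- and $\overline{K}$-irreducible components. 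Applying \Cref{generically_reducible} to the absolutely irreducible surface $\s_{Y_i}$ equipped with the pencil $\nu$ yields a $K$-cover $\phi\colon\Cu\to\bP^1$ of degree $>1$ whose pullback makes the generic fiber of $\nu$ split; for $u\notin\phi(\Cu(K))$ the fiber $\nu^{-1}(u)$, and hence $\pi_i^{-1}(\widehat{C}_u)$, is $K$-irreducible but geometrically reducible, so it has only finitely many $K$-rational points by \Cref{remark_serre}. Set $T_i'\coloneqq\phi(\Cu(K))$.

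In the irreducible case, pick a component $\widehat{L}_{l_j}$ of the genuine branch divisor of $\pi_i$. For all but finitely many $u$, \Cref{beuker_ramification} ensures that $l_j$ is not a branch value of $\lambda|_{\widehat{C}_u}$, so $\widehat{C}_u\cap\widehat{L}_{l_j}$ consists of two transverse intersection points; by \Cref{punti_infinito_beukers} the set $\widehat{C}_u\cap\widehat{D}$ varies algebraically in $u$, so generically these intersection points lie outside $\widehat{D}$ and their $\pi_i$-preimages avoid the finite singular locus of $Y_i$. \Cref{restriction} then shows that $\pi_i\colon\pi_i^{-1}(\widehat{C}_u)\to\widehat{C}_u$ acquires a branch point off $\widehat{C}_u\cap\widehat{D}$, and \Cref{trick_siegel}\labelcref{ramified} gives that only finitely many points of $\widehat{C}_u(\O_S)$ lift; take $T_i'$ to be the finite set of exceptional $u$'s in this analysis. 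The main obstacle in the whole argument is the reducible case: the detour through the cubic cover $\rho$ is precisely what promotes the non-pencil family $\widehat{C}_u$ to a genuine linear system, making \Cref{generically_reducible} applicable, and the degree-one computation $\rho|_{\widetilde{C}_u}\colon\widetilde{C}_u\to C_u$ is what allows the reducibility information to be transported back from $\s_{Y_i}$ to $Y_i$.
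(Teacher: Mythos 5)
Your proof is correct and follows essentially the same route as the paper: the same case split on whether $\pi_i^{-1}(\widehat{C}_u)$ is generically geometrically reducible, with the reducible case handled by passing to $\s_{Y_i}$ via \Cref{s_abs_irred} and \Cref{generically_reducible} applied to the pencil $\mu_\ell$ (the fiber-product/degree-one transfer is exactly the paper's step), and the irreducible case handled by simple connectedness, \Cref{beuker_ramification}, \Cref{restriction} and \Cref{trick_siegel}. The only blemish is cosmetic: the Beukers conics are not fibers of $\lambda$, so the phrase about not applying \Cref{generically_reducible} ``to $\lambda\circ\pi_i$'' should rather say that the family $\widehat{C}_u$ is not a fibration on $X$; this does not affect the argument.
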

	
	We will examine each $\lambda$-unramified cover. Let us first deal with the case in which $\pi_i^{-1}(\widehat{C}_u)$ is reducible over $\overline K$ for all but finitely many $u$. Then, in the notation of diagram \eqref{diagramma_s_y}, the same would be true for $(\sigma' \circ \pi_i')^{-1}(\widetilde{C}_u)$, since this is the fiber product of $\rho \colon \widetilde{C}_u \to C_u$, which is an isomorphism, and $\sigma \circ \pi_i \colon \pi_i^{-1}(\widehat{C}_u) \to C_u$, which is a reducible cover over $\overline{K}$. The cover $\pi_i$ is not ramified over $E$ since it is a $\lambda$-unramified cover, and so $\s_{Y_i}$ is absolutely irreducible by \Cref{s_abs_irred}. We may then apply \Cref{generically_reducible} to the map $\sigma' \circ \pi_i' \colon \s_{Y_i} \to \s$ to find a thin set $T_i'$ such that, for $u \notin T_i'$, the curve $(\sigma' \circ \pi_i')^{-1}(\widetilde{C}_u)$ is irreducible over $K$. Then also $\pi_i^{-1}(\widehat{C}_u)$ is irreducible over $K$ for $u \notin T_i'$. Hence, for $u \notin T_i'$, $\pi_i^{-1}(\widehat{C}_u)$ is irreducible over $K$ but reducible over $\overline{K}$, which implies that it contains finitely many $K$-rational points by \Cref{remark_serre}. In particular, for $u \notin T_i'$, only finitely many $S$-integral points of $C_u$ lift to $Y_i(K)$.
	
	Let us assume that $\pi_i^{-1}(\widehat{C}_u)$ is irreducible over $\overline K$ for all but finitely many $u$. Then, since $X\setminus \widehat{D}$ is simply connected by \Cref{caratterizzazione_complementi}, $\pi_i$ is ramified along at least one curve $\widehat{L}_l$. We can apply \Cref{beuker_ramification} to conclude that, for all but finitely many $u$, the smooth conic $\widehat{C}_u$ intersects transversally the branch divisor in at least one point in its affine part. We let $T_i'$ be the finite set of exceptions. Arguing as we did for \Cref{claim1} in the proof of \Cref{fondamentalissimo}, \Cref{trick_siegel} allows us to conclude that only finitely many points of $\widehat{C}_u(\O_S)$ lift to $Y_i(K)$. \Cref{claim3} follows taking $T'=\bigcup_i T'_i$.

	Using \Cref{non_thin_beukers_family} and \Cref{solo_finiti_liftano_beukers} we can pick an $u \notin T'$ such that $\widehat{C}_u(\O_S)$ is infinite and for all but finitely many $p \in \widehat{C}_u(\O_S)$ we have $\lambda(p) \notin T$. For any such $S$-integral point $p$ the curve $\widehat{L}_{\lambda(p)}$ contains $p$ and so has infinitely many $S$-integral points by \Cref{punti_infiniti_giusto}. Since $\lambda(p)\notin T$ we have that \emph{all} of $\widehat{L}_{\lambda(p)}(\O_S)$ lifts to $\lambda$-unramified covers, including $p$. Then all but finitely many points of $\widehat{C}_u(\O_S)$ would lift to $\lambda$-unramified covers, contradicting \Cref{claim3}.
 \end{proof}

\subsection{A non-potential example}\label{esempio_non_potenziale}

Whenever the conditions listed before the proof of \Cref{irreducible} hold over a given ring of integers $\O_{K,S}$, then the IHP holds over $\O_{K,S}$ without the need to perform any enlargement. However these conditions cannot hold over specific ring of integers, for instance over $\bZ$, because of the third condition. Still in some cases one can prove the IHP over $\bZ$: for example one can show that if $D \coloneqq (zy^2-x^3-z^3=0)$ and $X$ is the blowup of $\bP^2$ at $[x:y:z]=[2:3:1]$, then $(X\setminus \widehat{D})(\bZ)$ is not thin. Here the integral model for $X$ is the blowup of $\bP^2_{\bZ}$ at the section over $\Spec \bZ$ defined by $[2:3:1]$; the integral model for $\widehat{D}$ is the strict transform of the Zariski closure of $D$ in $\bP^2_{\bZ}$.

The essential difficulty lies in the fact that we cannot use \Cref{punti_infiniti_giusto} to show that a conic has infinitely many integral points. More explictly, due to \Cref{pell}, a smooth conic with one integral point contains infinitely many if and only if its divisor at infinity consists of one or two real points. We will only sketch how to circumvent these issues: in a forthcoming work \cite{coccia_sarnak} we will explore these ideas in detail and will find general conditions under which the set of $\bZ$-points on an affine smooth cubic surface is not thin.

The first step is to construct a non-thin set of (parameters for) Beukers' conics containing infinitely many integral points. There are precisely three lines defined over $\bQ$ in the surface $\s \coloneqq (w^3=zy^2-x^3-z^3)$, all contained in the plane $w+x=0$. Their equations are
\begin{equation*}
    L_1 \colon z=w+x=0 \qquad L_2 \colon z-y=w+x=0 \qquad L_3 \colon z+y=w+x=0
\end{equation*}
and they correspond to the flexes $[0:1:0], [0:1:1]$ and $[0:-1:1]$ of $D$, respectively. Notice that all these lines contain a non-thin set of integral points. Let $\mu_i \colon \s \to \bP^1$ be the conic fibration associated to $L_i$. Since all the $\bQ$-lines are coplanar, we cannot use $L_2$ or $L_3$ to produce a non-thin set of fibers of $\mu_1$ containing infinitely many integral points as we did in \Cref{non_thin_beukers_family}. To circumvent this issue we first check that for all but finitely many $p \in L_1(\bZ)$ the conic $\mu_1^{-1}(\mu_1(p))$ is smooth, has two real points at infinity and contains one integral point (namely $p$). We can then apply \Cref{pell} and obtain that $\mu_1^{-1}(\mu_1(p))(\bZ)$ is infinite. However these conics are parametrized by the thin set $\mu_1(L_1(\bZ))$: this is because the restriction $\mu_1 \colon L_1 \to \bP^1$ is a degree $2$ map. Still, looking at the fibers of $\mu_2$ passing through integral points of the conics $\mu_1^{-1}(\mu_1(p))$ \emph{while $p$ varies}, one can show that there is a non-thin set $\mathcal{U}$ such that for $u \in \mathcal{U}$ the conic $\mu_2^{-1}(u)$ contains infinitely many integral points. The strategy involves the study of the fiber product $W_p$ of $\mu_2 \colon D \to \bP^1$ and $\mu_2 \colon \mu_1^{-1}(\mu_1(p)) \to \bP^1$ similarly to \Cref{solo_finiti_liftano} and \Cref{just_need_one}, with the extra complications coming from checking that the hypotheses of \Cref{pell} are met. Indeed one needs to show that for all but finitely many $x\in \mu_1^{-1}(\mu_1(p))(\bZ)$ the preimages of $x$ under the base change map $f_p \colon W_p \to \mu_1^{-1}(\mu_1(p))$ are real and non-rational. That $f_p^{-1}(x)$ is given by real points comes from the fact that, for all but finitely many $p$, $f_p^{-1}(\mu_1^{-1}(\mu_1(p))\cap D)$ consists of smooth real points where $f_p$ is unramified, so if $x$ is sufficiently close to $\mu_1^{-1}(\mu_1(p))\cap D$ then it lifts to real points. Moreover, using \Cref{trick_siegel}, one can show that $f_p^{-1}(x)$ is given by \emph{non}-rational points for all but finitely many $x$.
To sum up, this allows to produce the sought non-thin family of Beukers' conics with infinitely many integral points.

It remains to show that for all but finitely many $u \in \mathcal{U}$ the corresponding Beukers' conic $\widehat{C}_u$ contains infinitely many integral points $p$ such that the line $\widehat{L}_{\lambda(p)}$ has two real points at infinity and so contains infinitely many integral points. Again, this is accomplished studying the fiber product of $\lambda \colon D \to \bP^1$ and $\lambda \colon \widehat{C}_u \to \bP^1$, using similar arguments as the above. The IHP then follows as in the proof of \Cref{irreducible}.

	\bibliography{mybibfile}

\newcommand{\etalchar}[1]{$^{#1}$}
\begin{thebibliography}{CDJ{\etalchar{+}}22}

\bibitem[ABP09]{bilu}
Paraskevas Alvanos, Yuri Bilu, and Dimitrios Poulakis.
\newblock Characterizing algebraic curves with infinitely many integral points.
\newblock {\em Int. J. Number Theory}, 5(4):585--590, 2009.

\bibitem[AT20]{ascher_turchet}
Kenneth Ascher and Amos Turchet.
\newblock Hyperbolicity of varieties of log general type.
\newblock In {\em Arithmetic Geometry of Logarithmic Pairs and Hyperbolicity of
  Moduli Spaces}, CRM Short Courses, pages 197--247. Springer, 2020.

\bibitem[Beu95]{beukers}
F.~Beukers.
\newblock Ternary form equations.
\newblock {\em J. Number Theory}, 54(1):113--133, 1995.

\bibitem[BG06]{B-G}
Enrico Bombieri and Walter Gubler.
\newblock {\em Heights in {D}iophantine geometry}, volume~4 of {\em New
  Mathematical Monographs}.
\newblock Cambridge University Press, Cambridge, 2006.

\bibitem[BSFP]{soroker_fehm_petersen_2}
Lior Bary-Soroker, Arno Fehm, and Sebastian Petersen.
\newblock Ramified covers of abelian varieties over torsion fields.
\newblock {\em Preprint (2022), arXiv:2206.01582}.

\bibitem[BSFP14]{soroker_fehm_petersen_1}
Lior Bary-Soroker, Arno Fehm, and Sebastian Petersen.
\newblock On varieties of {H}ilbert type.
\newblock {\em Ann. Inst. Fourier (Grenoble)}, 64(5):1893--1901, 2014.

\bibitem[Cam04]{campana_fourier_2004}
Fr\'{e}d\'{e}ric Campana.
\newblock Orbifolds, special varieties and classification theory.
\newblock {\em Ann. Inst. Fourier (Grenoble)}, 54(3):499--630, 2004.

\bibitem[Cam11]{campana_jimj_2011}
Fr\'{e}d\'{e}ric Campana.
\newblock Orbifoldes g\'{e}om\'{e}triques sp\'{e}ciales et classification
  bim\'{e}romorphe des vari\'{e}t\'{e}s k\"{a}hl\'{e}riennes compactes.
\newblock {\em J. Inst. Math. Jussieu}, 10(4):809--934, 2011.

\bibitem[CDJ{\etalchar{+}}22]{corvaja_demeio_javanpeykar_lombardo_zannier_2022}
Pietro Corvaja, Julian~Lawrence Demeio, Ariyan Javanpeykar, Davide Lombardo,
  and Umberto Zannier.
\newblock On the distribution of rational points on ramified covers of abelian
  varieties.
\newblock {\em Compositio Mathematica}, 158(11):2109–2155, 2022.

\bibitem[Coc]{coccia_sarnak}
Simone Coccia.
\newblock An {H}ilbert irreducibility theorem for {$\bZ$}-integral points of
  smooth cubic surfaces.
\newblock {\em In preparation}.

\bibitem[Coc19]{coccia_ihp}
Simone Coccia.
\newblock The {H}ilbert property for integral points of affine smooth cubic
  surfaces.
\newblock {\em J. Number Theory}, 200:353--379, 2019.

\bibitem[Cor07]{corvaja_solo}
Pietro Corvaja.
\newblock Rational fixed points for linear group actions.
\newblock {\em Ann. Sc. Norm. Super. Pisa Cl. Sci. (5)}, 6(4):561--597, 2007.

\bibitem[Cor16]{corvaja}
Pietro Corvaja.
\newblock {\em Integral points on algebraic varieties}, volume~3 of {\em
  Institute of Mathematical Sciences Lecture Notes}.
\newblock Hindustan Book Agency, New Delhi, 2016.

\bibitem[CTS87]{thelene_sansuc}
Jean-Louis Colliot-Th\'{e}l\`ene and Jean-Jacques Sansuc.
\newblock Principal homogeneous spaces under flasque tori: applications.
\newblock {\em J. Algebra}, 106(1):148--205, 1987.

\bibitem[CZ10]{CZ_advances}
Pietro Corvaja and Umberto Zannier.
\newblock Integral points, divisibility between values of polynomials and
  entire curves on surfaces.
\newblock {\em Adv. Math.}, 225(2):1095--1118, 2010.

\bibitem[CZ17]{CZ_hilbert}
Pietro Corvaja and Umberto Zannier.
\newblock On the {H}ilbert property and the fundamental group of algebraic
  varieties.
\newblock {\em Math. Z.}, 286(1-2):579--602, 2017.

\bibitem[CZ18]{CZnuovo}
Pietro Corvaja and Umberto Zannier.
\newblock {\em Applications of {D}iophantine approximation to integral points
  and transcendence}, volume 212 of {\em Cambridge Tracts in Mathematics}.
\newblock Cambridge University Press, Cambridge, 2018.

\bibitem[Deb01]{debarre}
Olivier Debarre.
\newblock {\em Higher-dimensional algebraic geometry}.
\newblock Universitext. Springer-Verlag, New York, 2001.

\bibitem[Dem20]{demeio_ijnt}
Julian~Lawrence Demeio.
\newblock Non-rational varieties with the {H}ilbert property.
\newblock {\em Int. J. Number Theory}, 16(4):803--822, 2020.

\bibitem[Dem21]{demeio_imrn}
Julian~Lawrence Demeio.
\newblock Elliptic fibrations and the {H}ilbert property.
\newblock {\em Int. Math. Res. Not. IMRN}, (13):10260--10277, 2021.

\bibitem[DS]{demeio_streeter}
Julian~Lawrence Demeio and Sam Streeter.
\newblock Weak approximation for del {P}ezzo surfaces of low degree.
\newblock {\em IMRN, to appear, arXiv:2111.11409}.

\bibitem[FZ07]{zannier_ferretti}
Andrea Ferretti and Umberto Zannier.
\newblock Equations in the {H}adamard ring of rational functions.
\newblock {\em Ann. Sc. Norm. Super. Pisa Cl. Sci. (5)}, 6(3):457--475, 2007.

\bibitem[GCH]{gvirtz_huang}
Dàmian Gvirtz-Chen and ZhiZhong Huang.
\newblock Rational curves and the {H}ilbert property on {J}acobian {K}ummer
  varieties.
\newblock {\em Preprint (2022), arXiv:2205.04364}.

\bibitem[GCM]{mezzedimi_gvirtz}
Dàmian Gvirtz-Chen and Giacomo Mezzedimi.
\newblock A {H}ilbert {I}rreducibility {T}heorem for {E}nriques surfaces.
\newblock {\em Trans. Amer. Math. Soc., to appear, arXiv:2109.03726}.

\bibitem[Har77]{Hart}
Robin Hartshorne.
\newblock {\em Algebraic geometry}.
\newblock Springer-Verlag, New York-Heidelberg, 1977.
\newblock Graduate Texts in Mathematics, No. 52.

\bibitem[HS00]{hindry_silverman}
Marc Hindry and Joseph~H. Silverman.
\newblock {\em Diophantine geometry}, volume 201 of {\em Graduate Texts in
  Mathematics}.
\newblock Springer-Verlag, New York, 2000.
\newblock An introduction.

\bibitem[HT01]{Hats}
Brendan Hassett and Yuri Tschinkel.
\newblock Density of integral points on algebraic varieties.
\newblock In {\em Rational points on algebraic varieties}, volume 199 of {\em
  Progr. Math.}, pages 169--197. Birkh\"auser, Basel, 2001.

\bibitem[Jav]{javanpeykar_2}
Ariyan Javanpeykar.
\newblock Hilbert irreducibility for varieties with a nef tangent bundle.
\newblock {\em Preprint (2022), arXiv:2204.12828}.

\bibitem[Jav21]{javanpeykar}
Ariyan Javanpeykar.
\newblock Rational points and ramified covers of products of two elliptic
  curves.
\newblock {\em Acta Arith.}, 198(3):275--287, 2021.

\bibitem[Lan83]{L}
Serge Lang.
\newblock {\em Fundamentals of {D}iophantine geometry}.
\newblock Springer-Verlag, New York, 1983.

\bibitem[Lan91]{lang_nt3}
Serge Lang.
\newblock {\em Number theory. {III}}, volume~60 of {\em Encyclopaedia of
  Mathematical Sciences}.
\newblock Springer-Verlag, Berlin, 1991.
\newblock Diophantine geometry.

\bibitem[Liu02]{liu}
Qing Liu.
\newblock {\em Algebraic geometry and arithmetic curves}, volume~6 of {\em
  Oxford Graduate Texts in Mathematics}.
\newblock Oxford University Press, Oxford, 2002.
\newblock Translated from the French by Reinie Ern\'{e}, Oxford Science
  Publications.

\bibitem[LS22]{loughran_salgado}
Daniel Loughran and Cec\'{\i}lia Salgado.
\newblock Rank jumps on elliptic surfaces and the {H}ilbert property.
\newblock {\em Ann. Inst. Fourier (Grenoble)}, 72(2):617--638, 2022.

\bibitem[Lug22]{luger}
Cedric Luger.
\newblock The {H}ilbert property for arithmetic schemes.
\newblock {\em Acta Arith.}, 206(2):115--126, 2022.

\bibitem[LY19]{levin_yasufuku}
Aaron Levin and Yu~Yasufuku.
\newblock Integral points and orbits of endomorphisms on the projective plane.
\newblock {\em Trans. Amer. Math. Soc.}, 371(2):971--1002, 2019.

\bibitem[NS]{streeter_campana}
Masahiro Nakahara and Sam Streeter.
\newblock Weak approximation and the {H}ilbert property for {C}ampana points.
\newblock {\em Preprint (2020), arXiv:2010.12555}.

\bibitem[Ser89]{SeMW}
Jean-Pierre Serre.
\newblock {\em Lectures on the {M}ordell-{W}eil theorem}.
\newblock Aspects of Mathematics, E15. Friedr. Vieweg \& Sohn, Braunschweig,
  1989.

\bibitem[Ser92]{SeTGT}
Jean-Pierre Serre.
\newblock {\em Topics in {G}alois theory}, volume~1 of {\em Research Notes in
  Mathematics}.
\newblock Jones and Bartlett Publishers, Boston, MA, 1992.

\bibitem[Ser95]{serre_revetement}
Jean-Pierre Serre.
\newblock Rev\^{e}tements ramifi\'{e}s du plan projectif (d'apr\`es {S}.
  {A}bhyankar).
\newblock In {\em S\'{e}minaire {B}ourbaki, {V}ol. 5}, pages Exp. No. 204,
  483--489. Soc. Math. France, Paris, 1995.

\bibitem[Sil89]{silverman}
Joseph~H. Silverman.
\newblock Integral points on curves and surfaces.
\newblock In {\em Number theory ({U}lm, 1987)}, volume 1380 of {\em Lecture
  Notes in Math.}, pages 202--241. Springer, New York, 1989.

\bibitem[Str21]{streeter_delpezzo}
Sam Streeter.
\newblock Hilbert property for double conic bundles and del {P}ezzo varieties.
\newblock {\em Math. Res. Lett.}, 28(1):271--283, 2021.

\bibitem[Zan10]{zannier_hilbert_algebraic_groups}
Umberto Zannier.
\newblock Hilbert irreducibility above algebraic groups.
\newblock {\em Duke Math. J.}, 153(2):397--425, 2010.

\bibitem[Zha06]{qi_zhang_rational_connectedness}
Qi~Zhang.
\newblock Rational connectedness of log {${\bf Q}$}-{F}ano varieties.
\newblock {\em J. Reine Angew. Math.}, 590:131--142, 2006.

\end{thebibliography}
	\bibliographystyle{alpha}

\end{document}